\documentclass[reqno,10pt]{amsart}
\usepackage{mathtools,amssymb,wasysym}
\usepackage{mathrsfs}
\usepackage{enumerate}
\usepackage[usenames,dvipsnames]{color}
\usepackage{hyperref}
\hypersetup{colorlinks=true, pdfstartview=FitV, linkcolor=BrickRed,citecolor=BrickRed, urlcolor=BrickRed}
\definecolor{labelkey}{rgb}{0.6,0,0}

\usepackage[margin=1.2in]{geometry}
\numberwithin{equation}{section}
\usepackage[numbers,sort&compress]{natbib}
\usepackage{hyperref}
\usepackage{esint}
\usepackage{verbatim}

\theoremstyle{plain}
\newtheorem{theorem}{Theorem}[section]
\newtheorem{lemma}[theorem]{Lemma}

\newtheorem{proposition}[theorem]{Proposition}

\theoremstyle{definition}

\theoremstyle{remark}
\newtheorem{remark}[theorem]{Remark}
\newtheorem*{rem*}{Remark}

 \makeatletter
 \newcommand{\norm}{\@ifstar{\@normb}{\@normi}}
 \newcommand{\@normb}[2]{\left\Vert{#1}\right\Vert_{#2}}
 \newcommand{\@normi}[2]{\Vert{#1}\Vert_{#2}}
 \makeatother

\def\pr{{\partial}}

\def\div{{\,\mathrm{div}\,}}

\def\R{{\mathbb R}}

\def\KK{{\mathcal K}}

\def\KK{{\mathcal K}}

\def\H{{\mathcal{H}}}

\def\R{{\bf R}}

\def\R{\mathbb{R}}

\def\T{{\mathcal{T}}}
 
 \global\long\def\Sob#1#2{{W}^{#1,#2}}

 \global\long\def\Leb#1{L^{#1}}

 \DeclareMathOperator{\arctanh}{arctanh}
 
 \newcommand{\action}[1]{\left<#1 \right>}
 
 \RequirePackage{bm}

 \newcommand{\boldj}{\mathbf{j}}

 \DeclareMathOperator{\Div}{div}
 \newcommand{\relphantom}[1]{\mathrel{\phantom{#1}}}
 
 \newcommand{\myd}[1]{\,\mathrm{d}{#1}}

 \DeclareMathOperator{\supp}{supp}

\newcommand{\of}[1]{\left(#1\right)} 
\newcommand{\off}[1]{\left[#1\right]}

\usepackage{tcolorbox}

\allowdisplaybreaks

\keywords{Asymptotic behavior; Lens transform; Wave operator}
\subjclass[2020]{35B40,35Q70,35Q83}
\thanks{W. Huang was partially supported by NSF grant DMS-2452275. }

\begin{document}

\title[Scattering map for the Vlasov--Poisson system with a repulsive harmonic potential]{Scattering map for the Vlasov--Poisson system with a repulsive harmonic potential}

\author[W. Huang]{Wenrui Huang}
\address{Department of Mathematics, Brown University, 151 Thayer Street, Providence, RI 02912, USA}
\email{wenrui\_huang@brown.edu }

\author[H. Kwon]{Hyunwoo Kwon}
\address{Division of Applied Mathematics, Brown University, 182 George Street, Providence, RI 02912, USA}
\email{hyunwoo\_kwon@brown.edu }

\begin{abstract}
We consider the Vlasov--Poisson system with a repulsive harmonic potential and prove the (modified) scattering of solutions, as well as the existence of wave operators, in any spatial dimension $d\geq 2$.
The main novelty of this work is the construction of the wave operators and the introduction of the lens transform for the Vlasov--Poisson system. In addition, we provide a new and simpler proof that relaxes the assumptions on the initial data compared with those in \cite{BVRVR25,VRVR24}. 
\end{abstract}

\maketitle

\section{Introduction}

The Vlasov--Poisson system is one of the fundamental equations in kinetic theory, widely studied in astrophysics and plasma physics to describe the collective dynamics of large-scale particle systems. It describes the distribution of stars in a galaxy \cite{J15} or electrons in a plasma \cite{V38} in a collisionless regime. It is also of interest to include the external potential in the system when the environment can affect the physical system beyond self-consistent interactions. A prominent example is the effect of tidal forces on satellite galaxies and the formation of a star cluster, where the phase mixing is induced by the background external potential. Hence, it is natural to ask which external potential leads to a satisfactory theory for well-posedness and the asymptotic dynamics of solutions to the system.

The Vlasov--Poisson system with an external potential $V$ is given by
\begin{equation}\label{eq:VP-external-potential}
\partial_t f +\{f,\mathcal{H}\}=0,
\end{equation}
where the Poisson bracket is defined as $\{f,g\}=\nabla_x f \cdot \nabla_v g-\nabla_v f \cdot \nabla_x g$ and the associated Hamiltonian $\mathcal{H}$ is given by
\begin{equation}\label{eq:force-field}
 \mathcal{H}=\frac{|v|^2}{2}+V(x)-\lambda \phi(t,x),\quad \phi(t,x)=\iint_{\mathbb{R}^d_y\times\mathbb{R}^d_v} \Gamma(x-y)f(t,y,v)\myd{v}\myd{y}.
\end{equation}
Here $\Gamma$ denotes the fundamental solution to $-\Delta$ and $f$ denotes a particle distribution function $f:\mathbb{R}\times\mathbb{R}^d_x\times\mathbb{R}^d_v\rightarrow\mathbb{R}_+$. The case $\lambda=-1$ corresponds to the gravitational case and $\lambda=1$ corresponds to the plasma case. When $V=0$, the system becomes the classical Vlasov--Poisson system. When $V(x)=-|x|^2/2$, the system becomes the Vlasov--Poisson system with a repulsive harmonic potential, which is the main interest of this paper.

We first review the Vlasov--Poisson system and the related system in the absence of the external potential. The global existence theory for the Vlasov--Poisson system is classical. When the initial data is sufficiently small, Bardos and Degond \cite{BD85} first proved the global existence of smooth solutions. Subsequently, Lions and Perthame \cite{LP91} and Pfaffelmoser \cite{P92} independently established the global existence of classical solutions for large initial data. Once we have a global solution to the problem, it is natural to ask about the asymptotic behavior of the solutions. 

There has been substantial progress on the scattering problem for the Vlasov--Poisson system. Chae and Ha \cite{CH06} showed that the solution scatters linearly when $d\geq 4$, that is, the nonlinear solution converges to a solution of the linear kinetic transport equation. When $d=3$, due to the long-range effect of the Coulomb potential, Choi and Ha \cite{CH11} showed that any nontrivial classical solution to the Vlasov--Poisson system does not exhibit linear scattering. Later, Choi and Kwon \cite{CK16} proved that the small data solution will converge to the asymptotic state at infinity along a logarithmic change of the trajectory. However, their trajectory was implicit.

This implicit trajectory was later characterized by Ionescu, Pausader, Wang, and Widmayer \cite{IPWW22} using the so-called $Z$-norm method, which has been employed in several works to prove global stability of solutions to diverse dispersive equations. Later, Flynn, Ouyang, Pausader, and Widmayer \cite{FOPW23} obtained a similar result under weaker assumptions on the initial data, guided by the Hamiltonian structure and a pseudoconformal transformation. Moreover, they constructed wave operators and scattering maps. In the presence of the boundary, the first-named author, Pausader, and Suzuki \cite{HuPaSu} proved modified scattering in a smooth convex domain with perfectly conducting boundary conditions.

From a different perspective, Pankavich \cite{Pan22} proved modified scattering for a multispecies case under the assumption that the electric field decays sufficiently fast, rather than imposing smallness on the initial data. More recently, Bigorgne and Velozo Ruiz \cite{BiVe} and Schlue and Taylor \cite{ScTa25} obtained refined asymptotic expansions describing the long-time behavior of solutions.  We also refer to \cite{PW21, PW24} for modified scattering results in the presence of a point charge.

There are also numerous results concerning scattering for other Vlasov-type systems. Bigorgne \cite{Bi25} and Pankavich and Ben-Artzi \cite{PaBe24} established modified scattering for the Vlasov--Maxwell system by different methods. When $\Gamma$ in \eqref{eq:force-field} is replaced with the Yukawa potentials, Iacobelli, Rossi, and Widmayer \cite{IaRoKl26} proved scattering for the screened Vlasov--Poisson system in dimensions $d\ge 2$, and Wei \cite{Wei25} completed the analysis in the one-dimensional case. When $\Gamma$ is replaced with the Riesz kernel of order $\alpha$, the authors \cite{HK25} studied the Vlasov--Riesz systems of order $\alpha$ and proved modified scattering with a polynomial correction for $1<\alpha<1+\delta$, while linear scattering holds in the regime $1/2<\alpha<1$ which was previously observed in \cite{CH11}. 

To describe phase mixing for a collisionless gas \cite{RS20} and the formation of a star cluster \cite{CSFGKA13}, it is reasonable to introduce external potentials $V$ to the Vlasov--Poisson system. If we have an external smooth potential $V$, the second-order linearization around the stationary point will naturally induce harmonic potential terms $\pm |x|^2/2$. For the confining case, Chaturvedi and Luk \cite{CL22,CL24} have investigated phase mixing for the 1D linear kinetic transport equation with external potential $V(x)=x^2/2+\varepsilon x^4/2$, $\varepsilon>0$ and gravitational Vlasov--Poisson with an external Kepler potential $V(x)=|x|^{-1}$, respectively. 

When we have repulsive harmonic potentials, $V(x)=-|x|^2/2$, Velozo Ruiz and Velozo Ruiz \cite{VRVR24} first established sharp space--time decay estimates for small-data solutions and proved the global existence of small solutions to \eqref{eq:VP-external-potential} in dimensions $d\geq 2$ using commuting vector fields and modified vector field methods. Later, Bigorgne, Velozo Ruiz, and Velozo Ruiz \cite{BVRVR25} proved modified scattering of solutions in the two-dimensional case using a refined modified vector field method. 

The purpose of this paper is to provide a complete characterization of the asymptotic dynamics of the problem \eqref{eq:VP-rep-potential-reformulate} by proving modified scattering for small-data global solutions and constructing the modified wave operator. Compared to \cite{VRVR24,BVRVR25}, our main result does not require a higher regularity assumption on the initial data, and we do not assume spatial polynomial decay on the initial data. 

\subsection{Main result} To state our main result, if we write $f=\mu^2$, then the problem \eqref{eq:VP-external-potential} with $V(x)=-|x|^2/2$ can be reformulated into 
\begin{equation}\label{eq:VP-rep-potential-reformulate}
\partial_t \mu + v\cdot \nabla_x \mu+x\cdot \nabla_v\mu +\lambda E[\mu]\cdot\nabla_v\mu=0,
\end{equation}
where 
\[
	E[\mu](t,x)=\iint_{\mathbb{R}^d_y\times\mathbb{R}^d_v} \Gamma(x-y)\mu^2(t,y,v)\myd{V}dy. 
\]

We will mainly discuss the case $d=2$ here since our method can easily deduce the results for the higher dimensions and the dynamics are simpler. See Remark \ref{rem:linear-scattering}.

\begin{theorem}\label{thm:A}
There exists $\varepsilon_0>0$ such that the following result holds: given $\mu_0 \in C^1_{x,v}$ satisfying 
\begin{equation}\label{eq:initial-mu}
\norm{\action{v}\mu_0}{\Leb{2}_{x,v}}+\norm{\action{v}\nabla_{x,v}\mu_0}{\Leb{\infty}_{x,v}}+\norm{\action{v}^3\mu_0}{\Leb{\infty}_{x,v}}\leq \varepsilon_0,
\end{equation}
there exists a unique global solution $\mu$ of \eqref{eq:VP-rep-potential-reformulate} with $\mu(t=0)=\mu_0$. Moreover, there exist $\mu_\infty\in \Leb{2}_{x,v}\cap\Leb{\infty}_{x,v}$ and $E_\infty=E[\mu_\infty]\in L^\infty_x$ such that we have 
\begin{equation}\label{eq:scattering}
\norm{\mu(t,\mathcal{X}(t,x-v,v),\mathcal{V}(t,x-v,v))-\mu_\infty(x,v)}{L^\infty_{x,v}}\apprle \frac{\action{t}^{10}}{e^t} \quad \text{as } t\rightarrow\infty,
\end{equation}
where 
\begin{equation}\label{eq:scattering-trajectory}
	\begin{aligned}
 \mathcal{X}(t,x,v)&=x \cosh t +v\sinh t -\lambda t e^{-t} E_\infty(x+v), \\
 \mathcal{V}(t,x,v)&=v \cosh t +x\sinh t +\lambda t e^{-t} E_\infty(x+v).
\end{aligned}
\end{equation}

\end{theorem}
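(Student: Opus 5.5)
Our plan is to filter out the linear hyperbolic flow and study the profile along its characteristics. The linear flow $\dot x=v$, $\dot v=x$ is solved by $X_0(t)=x\cosh t+v\sinh t$ and $V_0(t)=v\cosh t+x\sinh t$. In the characteristic variables $u_\pm = x\pm v$ this reads $u_+(t)=e^{t}u_+$ and $u_-(t)=e^{-t}u_-$. We will work with the profile
\[
\gamma(t,x,v)=\mu\bigl(t,X_0(t,x,v),V_0(t,x,v)\bigr).
\]
It satisfies $\partial_t\gamma=-\lambda E[\mu](t,X_0)\cdot(\nabla_v\mu)(t,X_0,V_0)$. Writing $\nabla_{v}\mu$ in terms of profile derivatives gives
\[
\partial_t\gamma=-\lambda\, E(t,X_0)\cdot\bigl(\cosh t\,\nabla_v\gamma-\sinh t\,\nabla_x\gamma\bigr).
\]
In the $u_\pm$ variables the weight on the dangerous direction is $e^{t}$ (derivative in $u_-$ direction) and $e^{-t}$ on the other.

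The first step is field decay. The key gain is that $X_0$ is, to leading order, $\tfrac{e^t}{2}(x+v)$. Hence the spatial density is
\[
\rho(t,y)=\int\gamma\Bigl(t,\text{preimage}\Bigr)\,dv\;\approx\; (2e^{-t})^{d}\!\int \gamma^2\bigl(t,u_+=2e^{-t}y,\,u_-\bigr)\,du_- .
\]
So $\rho(t,y)\approx e^{-dt}\varrho(t,e^{-t}y)$, where $\varrho(t,z)=\int\gamma^2(t,2z,u_-)\,du_-$; this is essentially a lens/pseudoconformal rescaling. Consequently $E(t,y)\approx e^{-(d-1)t}\bar E(t,e^{-t}y)$ with $\bar E=\nabla\Gamma*\varrho$. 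The $\langle v\rangle^3$ weight in $L^\infty$ together with the $L^2$ bound ensures $\varrho\in L^1\cap L^\infty$, so $\bar E\in L^\infty$. In $d=2$ this gives $|E(t,X_0)|\lesssim\varepsilon^2 e^{-t}$ and, along the flow, $E(t,X_0)\approx e^{-t}\bar E(t,x+v)$ with errors $O(e^{-2t}\langle t\rangle^{C})$.

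The second step is bootstrap and derivatives. We propagate $\|\langle v\rangle\gamma\|_{L^2}$, which is exactly conserved by the Liouville property, and $\|\langle v\rangle^3\gamma\|_{L^\infty}$, which holds up to $\langle t\rangle$ losses since characteristics move only $O(\varepsilon^2 t)$. We also propagate derivative bounds: $\|\nabla_{u_+}\gamma\|_{L^\infty}\lesssim\varepsilon$ and $\|\nabla_{u_-}\gamma\|_{L^\infty}\lesssim\varepsilon\langle t\rangle^{C}$. The derivative equations involve $\nabla E\sim e^{-2t}$ times $e^{t}$ growth factors, which is integrable with logarithmic/polynomial losses. This is where the $\action{t}^{10}$ arises.

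The third step is modified scattering. The leading transport term is $e^{t}\,e^{-t}\bar E(t,u_+)\cdot\nabla_{u_-}\gamma$. This is $O(1)$ and not integrable, so it produces the logarithmic, here linear-in-$t$, correction in \eqref{eq:scattering-trajectory}. First we show $\partial_t\varrho=O(\varepsilon^3 e^{-t}\langle t\rangle^C)$, because $\varrho$ integrates out $u_-$ and the $O(1)$ term is a $u_-$-divergence with coefficient independent of $u_-$. Hence $\bar E(t)\to E_\infty$ exponentially. Next we set
\[
\sigma(t,x,v)=\gamma\bigl(t,\,\text{shift of } u_- \text{ by } -\lambda t E_\infty(u_+)\bigr).
\]
Then $\partial_t\sigma$ involves only $\bigl(\bar E(t)-E_\infty\bigr)$ and the subleading errors, both of size $e^{-t}\langle t\rangle^{10}$. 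So $\sigma\to\mu_\infty$ in $L^\infty$ at that rate. Unwinding the change of variables gives \eqref{eq:scattering} with the trajectory $\mathcal X,\mathcal V$. The argument $x-v$ appears because it matches the $u_-$ shift convention, and $E_\infty=E[\mu_\infty]$ follows from passing to the limit in $\varrho$.

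The main obstacle we expect is the second step. The $\nabla_{u_-}\gamma$ derivative is hit by the $e^{t}$ factor, so we must verify that its growth stays only polynomial. This needs the precise structure that $E(t,X_0)$ depends, up to $e^{-2t}$ errors, only on $u_+$; then the $O(1)$ shear does not feed back into $\nabla_{u_-}$. We would try to exploit this by commuting with the vector field adapted to $u_+$ first.
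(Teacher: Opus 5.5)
Your plan is the paper's lens-transform argument written in the original time variable. One checks that $\gamma(t,x,v)=\gamma_{\mathrm{lens}}(\tanh t,\,x+v\tanh t,\,v)$. So with $s=\tanh t$ you have $\nabla_{u_+}\gamma=\tfrac{1+s}{2}\nabla_q\gamma_{\mathrm{lens}}+\tfrac12\nabla_p\gamma_{\mathrm{lens}}$ and $\nabla_{u_-}\gamma=\tfrac{1-s}{2}\nabla_q\gamma_{\mathrm{lens}}-\tfrac12\nabla_p\gamma_{\mathrm{lens}}$.

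\textbf{First gap: the derivative hierarchy in your second step is reversed.} The bootstrap $\|\nabla_{u_+}\gamma\|_{L^\infty}\lesssim\varepsilon$ cannot close. Since $\partial X_0/\partial u_+=\tfrac{e^t}{2}$, differentiating the profile equation in $u_+$ differentiates the coefficient of the $O(1)$ shear. This produces the source $\tfrac{\lambda}{2}e^{2t}(\nabla E)(t,X_0)\cdot\nabla_{u_-}\gamma$. Because $\nabla E(t,y)=(\cosh t)^{-2}(\nabla E_{\mathrm{lens}})(\tanh t,y/\cosh t)$, this source is of order $\|\nabla E_{\mathrm{lens}}\|_{L^\infty}\|\nabla_{u_-}\gamma\|_{L^\infty}$ and does not decay. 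Hence $\nabla_{u_+}\gamma$ grows linearly in $t$ for generic data. Your own third step forces this: $\gamma\approx\sigma(u_+,u_-+\lambda tE_\infty(u_+))$ gives $\nabla_{u_+}\gamma\approx\lambda t\,\nabla E_\infty\cdot\nabla_{u_-}\sigma$.

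Conversely, $\partial X_0/\partial u_-=\tfrac{e^{-t}}{2}$ makes the $u_-$-derivative of the coefficient $O(e^{-3t})$, which beats the $e^{t}$. So $\nabla_{u_-}\gamma$ stays bounded. The structure you invoke in your last paragraph is exactly why the direction you call dangerous is harmless. The correct hierarchy, matching the paper's ($\nabla_p\gamma$ bounded, $\nabla_q\gamma\lesssim\varepsilon\langle F(s)\rangle^5$, cf.\ \eqref{eq:gradient-q-bound}), is $\nabla_{u_-}\gamma=O(\varepsilon)$ and $\nabla_{u_+}\gamma=O(\varepsilon\langle t\rangle^{C})$. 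It closes because $\nabla_{u_+}\gamma$ enters the equation only with the prefactor $e^{-t}E(t,X_0)=O(e^{-2t})$, and enters $\nabla E$ only through an interpolation such as \eqref{eq:gradient-E-control}.

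You must also propagate $\langle v\rangle\nabla_{u_-}\gamma$; this is what the hypothesis on $\langle v\rangle\nabla_{x,v}\mu_0$ is for. Replacing $E(t,X_0)$ by a function of $x+v$ costs $O(e^{-3t}\langle t\rangle^{C}|v|)$ if you rescale exactly by $\cosh t$, using $X_0/\cosh t=x+v\tanh t$. Your $e^{-t}$ rescaling instead leaves an error proportional to $e^{-3t}|x-v|$. That cannot be controlled without the spatial weights the theorem avoids.

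\textbf{Second gap: the uniform bound on the rescaled field is circular as you order the steps.} Saying that characteristics move only $O(\varepsilon^2 t)$ presupposes $\sup_t\|\bar E(t)\|_{L^\infty}\lesssim\varepsilon^2$. But your first step obtains $\bar E\in L^\infty$ from the $\langle v\rangle^3$ bound. By your own second step that bound grows like $\langle t\rangle^3$, and genuinely so, since the velocity support drifts by about $\lambda tE_\infty$. Fed back in, it only gives $\|\bar E(t)\|_{L^\infty}\lesssim\varepsilon^2\langle t\rangle$, which worsens the moment growth, and so on. Note also that $\|\langle v\rangle\gamma\|_{L^2}$ is not conserved; only $\|\gamma\|_{L^2}$ is.

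The uniform bound has to come from the time regularity of the rescaled field, established inside the bootstrap. This is the heart of the paper's proof. Proposition~\ref{prop:LipboundE} gives $|E(s_1)-E(s_0)|\lesssim\varepsilon^2(s_1-s_0)|\ln(s_1-s_0)|\langle F(s_1)\rangle^4$ in the compactified time. Proposition~\ref{prop:bootstrap} then sums this over the dyadic intervals $[1-2^{-k},1-2^{-k-1}]$, where the factor $2^{-k}$ absorbs the polynomial losses $\langle F\rangle^4\sim k^4$.

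Your third-step idea is the right mechanism, but it cannot be a pointwise bound on $\partial_t\varrho$. Here $\partial_t\varrho$ is the spatial divergence of a flux; in lens variables, $\partial_s\rho+\nabla_q\cdot\mathbf{j}=0$. The operator $\nabla\Gamma*\nabla\cdot$ is not bounded on $L^\infty$, and you have no $L^2$ derivative bounds. So you need the logarithmic-loss estimate of Proposition~\ref{prop:LipboundE}, run simultaneously with the moment and derivative bounds.
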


\begin{remark}\leavevmode
\begin{enumerate}[(i)\,\,\,]
\item The global well-posedness of  \eqref{eq:VP-rep-potential-reformulate} and long-time dynamics were studied in \cite{BVRVR25,VRVR24} by means of (modified and commuting) vector field methods. In contrast, our approach is simpler and allows for weaker assumptions on the initial data $\mu_0$. In particular, we remove the weight conditions in $x$ and require only one derivative on the initial data. 
\item Our trajectory \eqref{eq:scattering-trajectory} recovers the result in \cite{BVRVR25} once we change $x$ by $x+v$ and redefine $\mu_\infty$. We choose such a trajectory because it is easier to construct the wave operator and the relation between the asymptotic electric field $E_{\infty}$ and $\mu_{\infty}$ is simpler: $E_{\infty}=E[\mu_{\infty}]$.
\item The decay of the density and of the electric field, together with their derivatives, is transparent in view of \eqref{eq:computation-rules-lens}.  
\item Unlike the Vlasov--Poisson system without potentials (see e.g. \cite{FOPW23,IPWW22}), we do not have to switch the role of $x$ and $v$ in the asymptotic quantity $E_{\infty}$. 
\end{enumerate}
\end{remark}

Our next theorem concerns the existence of a modified wave operator for small data.
\begin{theorem}\label{thm:B}
There exists $\varepsilon_0>0$ such that given  $\mu_{\infty}\in W_{x,v}^{2,\infty}$ and $E_{\infty}=E[\mu_{\infty}]\in W^{3,\infty}$ satisfying
\begin{equation}\label{eq:condition-mu-infinity}
	\begin{split}
&\norm{\action{v}\mu_\infty}{\Leb{2}_{x,v}}+\norm{\action{v}^4\mu_\infty}{\Leb{\infty}_{x,v}}+\norm{\action{v}\nabla_{x,v} \mu_\infty}{\Leb{\infty}_{x,v}}+\norm{\action{v}^2\nabla^2_{x,v}\mu_\infty}{\Leb{\infty}_{x,v}}+\norm{E_\infty}{W^{3,\infty}}\leq \varepsilon_0,		
	\end{split}
\end{equation}
there exists a unique global strong solution \eqref{eq:VP-rep-potential-reformulate} such that
\begin{equation}\label{eq:asymptotic-local-uniform}
		\mu(t,\mathcal{X}(t,x-v,v),\mathcal{V}(t,x-v,v))\rightarrow\mu_{\infty}(x,v)\quad \text{as } t\rightarrow \infty.
\end{equation}
\end{theorem}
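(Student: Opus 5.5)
Since the linear flow is hyperbolic, I will build the wave operator by solving backward from $t=\infty$ in filtered variables. The solution is written along the modified trajectory and compared with $\mu_\infty$.

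\emph{Step 1: filtered unknown.} Set $\gamma(t,x,v)=\mu(t,\mathcal{X}(t,x-v,v),\mathcal{V}(t,x-v,v))$, or equivalently work with the linear-flow profile $g(t,x,v)=\mu(t,\Phi_t(x,v))$, where $\Phi_t$ is the free hyperbolic flow $(x\cosh t+v\sinh t,\,v\cosh t+x\sinh t)$. In the variables $(y,w)=(x-v,v)$ the free flow takes the form $e^{t}(y+w)/2$ plus $e^{-t}(y-w)/2$ components. Hence the unstable coordinate $x$ (the original $x-v+v$) is frozen, and the density at time $t$ concentrates like $e^{-dt}\int \mu^2(\cdot,v)\,dv$ evaluated at $x\approx e^{t}(\cdot)$. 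The equation for $g$ is
\[
\partial_t g+\lambda E[\mu](t,\Phi_t)\cdot (\nabla_v\text{ pulled back})g=0 .
\]
The pulled-back $\nabla_v$ is $\cosh t\,\nabla_v-\sinh t\,\nabla_x$ applied to $g$, which is of size $e^t$. The field $E(t,\cdot)$ at position $\sim e^t$ is $\sim e^{-(d-1)t}\cdot e^{-t}$ times the asymptotic field $E_\infty$. I will use the rules \eqref{eq:computation-rules-lens}, the lens transform, to make this precise: $E[\mu](t,X)=e^{-(d-1)t}E[\rho\text{-rescaled}](e^{-t}X)$. For $d=2$ the product is of order $1\cdot$ time-integrable only after the logarithmic (here $te^{-t}$) correction. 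This is why $\mathcal{X},\mathcal{V}$ carry the $\lambda te^{-t}E_\infty$ term.

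\emph{Step 2: approximate solution and backward problem.} I define the ansatz $\mu_{app}(t,\mathcal{X}(t,x-v,v),\mathcal{V}(t,x-v,v))=\mu_\infty(x,v)$. This is a well-defined function because the map is a near-symplectic diffeomorphism for large $t$, and invertibility uses $\|E_\infty\|_{W^{1,\infty}}$ small. I then compute its residual $R=(\partial_t+v\cdot\nabla_x+x\cdot\nabla_v+\lambda E[\mu_{app}]\cdot\nabla_v)\mu_{app}$. The key identity is that $E[\mu_{app}](t,\mathcal{X})$ equals $e^{-t}E_\infty(x)$ up to errors $O(\langle t\rangle^{k}e^{-2t})$. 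This uses one more derivative of $E_\infty$ (Taylor expand the $te^{-t}E_\infty$ shift) and the moments $\langle v\rangle^4\mu_\infty$ to control the $v$-integration in the density. Applying $\nabla_v$ costs $e^t$, so $R=O(\langle t\rangle^k e^{-t})$ in weighted $L^\infty$ and $L^2$. With the $\nabla^2\mu_\infty$ bound, $\nabla R$ satisfies the same bound after pulling back. For large $T_n\to\infty$ I solve \eqref{eq:VP-rep-potential-reformulate} backward from $\mu(T_n)=\mu_{app}(T_n)$, writing $\mu=\mu_{app}+\nu_n$ with $\nu_n(T_n)=0$.

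\emph{Step 3: uniform bounds and limit.} Along the characteristics of the full nonlinear flow, expressed in filtered coordinates, I will propagate a bootstrap bound. The bound is $\|\nu_n\|\lesssim \varepsilon\langle t\rangle^{k}e^{-t}$ on $[T_0,T_n]$, together with derivative bounds allowing growth $e^{t}\cdot$(that), uniformly in $n$. The source is $R$. The linear term $\lambda E[\nu_n]\cdot\nabla_v\mu_{app}$ is bounded by $e^{-t}\|\nu_n\|$-type quantities times $e^t\|\nabla\mu_\infty\|$. This gives a Gronwall factor that is integrable thanks to the extra $e^{-t}$ decay of $E$ in $d=2$ combined with smallness. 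This closure is the main obstacle: derivatives of the filtered unknown are not uniformly bounded, since the pullback of $\nabla_v$ grows like $e^t$. I will close it by estimating only the combination $\cosh t\nabla_v-\sinh t\nabla_x\approx e^{t}(\nabla_v-\nabla_x)/2$ applied to $\nu$, with weight $e^{-t}$, and by using that $E$ is Lipschitz with norm $e^{-2t}$ in the original variables. At $t=T_0$ the data is small in the norm \eqref{eq:initial-mu}, so Theorem \ref{thm:A} extends each $\mu_n$ globally. Arzelà--Ascoli and $L^2$ compactness then give a limit $\mu$ solving the equation, with $\|\mu-\mu_{app}\|\lesssim\langle t\rangle^k e^{-t}$. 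This yields \eqref{eq:asymptotic-local-uniform}.

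Uniqueness follows by applying the same energy estimate to the difference of two such solutions, which vanishes at $t=\infty$.
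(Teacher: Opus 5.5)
\paragraph{Verdict.} Your overall strategy is a legitimate alternative in principle: build an explicit approximate solution, solve backward from times $T_n\to\infty$, prove uniform bounds, and pass to the limit by compactness. The paper does something different. It turns the final state into Cauchy data at the singular lens time $s=-1$, removes the singular part of the Hamiltonian with a type-3 generating function, and solves \eqref{eq:sigma} by Picard iteration. The problem with your proposal is that Step~3, where all the difficulty lies, is not established, and the mechanism you give rests on a scaling error.

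\paragraph{The scaling error and the zeroth-order estimate.} By \eqref{eq:computation-rules-lens} with $d=2$, $E[\mu](t,x)=(\cosh t)^{-1}E[\gamma](\tanh t,x/\cosh t)$. So $|E[\mu](t)|\sim e^{-t}$, not $e^{-(d-1)t}e^{-t}$. Likewise $E[\mu_{app}](t,\mathcal{X})\approx 2e^{-t}E_\infty(x)$, with an error of order $e^{-t}\min\{1,e^{-2t}(\langle t\rangle+|v|)\}$, which is not uniform in $v$.

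Hence $\lambda E[\nu_n]\cdot\nabla_v\mu_{app}$ has size $\varepsilon^2\|\nu_n\|$ \emph{per unit time}. This is exactly the borderline term responsible for modified scattering; in lens time it carries the factor $f(s)=(1-s^2)^{-1}$, and $\int f\,ds=\infty$. There is no ``extra $e^{-t}$ decay'' and no integrable Gronwall factor: the backward factor on $[t,T_n]$ is $e^{C\varepsilon^2(T_n-t)}$.

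For the undifferentiated estimate you can still beat this with the decay of $R$, but only if the constant in $|E[\mu_n]-E[\mu_{app}]|\lesssim e^{-t}\|\nu_n\|$ is uniform in $t$. If you measure concentration with the lens momentum $p=v\cosh t-x\sinh t$ (that is, with your linear-flow profile $g$), the relevant moments grow like $\langle t\rangle^3$ because of the drift $p\approx\lambda tE_\infty$ (cf.\ \eqref{eq:p-gamma-L-infinity}). The Gronwall exponent then becomes $\varepsilon^2T_n^4$ and the argument fails. So the two filtered unknowns of Step~1 are not interchangeable: only weights centred on the modified trajectory, like the paper's $z=p-\lambda F(s)E_{-1}(w)$, stay bounded.

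\paragraph{The missing idea: derivative bounds.} You need these both for Arzel\`a--Ascoli and for any Cauchy or uniqueness estimate, since those contain $(E[\mu_n]-E[\mu_m])\cdot\nabla_v\mu_n$. Your plan has two problems here.
\begin{itemize}
\item Estimating only $(\cosh t\nabla_v-\sinh t\nabla_x)\nu$ does not give a closed system. In lens variables this is essentially $\nabla_p\gamma$, which is forced by $\nabla_q\gamma$ (see \eqref{eq:derivative-relation-gamma}). And $\nabla_q\gamma$ is also what controls $\nabla E$ in Lemma~\ref{lem:estimate-E}.
\item In coordinates adapted to the modified trajectory, which the previous paragraph pushes you toward, the derivative equations contain the coefficient $f(s)\nabla[E(s,q)-E_{-1}(w)]$. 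This is of size $\min\{(1+s)^{-1},|z|\}$; see \eqref{eq:expression-d^2K} and the bound on $\nabla^2_{w,w}\KK$ in the proof of Lemma~\ref{lemma:bound-K}. It is neither integrable in time nor bounded in the transverse momentum. The $v$-dependent error in your ``key identity'' is the undifferentiated version of this coefficient.
\end{itemize}

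\paragraph{How the paper handles it.} The paper closes the estimates only by trading $z$-weight for time through $\theta=\langle z\rangle/(1+(s+1)\langle z\rangle)$, which satisfies $\pr_s\theta=-\theta^2$. This makes $\theta^{-1}\nabla^2_{w,w}\KK$ and $\theta\nabla^2_{z,z}\KK$ integrable in time (see \eqref{eq:estimate-d^2K}). It then propagates $\nabla_w\sigma$, $\theta\nabla_z\sigma$ and their $\theta$-weighted second derivatives; this is why the hypotheses include $\langle v\rangle\nabla\mu_\infty$ and $\langle v\rangle^2\nabla^2\mu_\infty$. The paper also gains a factor $(1+s)$ in field differences from the continuity equation, which is what makes the iteration converge despite $\int f=\infty$. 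Your proposal has no substitute for either device.

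\paragraph{Two smaller points.}
\begin{itemize}
\item Theorem~\ref{thm:A} cannot be invoked at $t=T_0>0$. It runs forward from $t=0$, and the norm \eqref{eq:initial-mu} at a positive time requires spatial moments of $\mu_\infty$, because the solution lives near $v\approx x\tanh T_0$.
\item Uniqueness does not follow from ``the same energy estimate''. Two solutions satisfying \eqref{eq:asymptotic-local-uniform} differ by something tending to $0$ only locally uniformly and without a rate, which cannot beat the backward factor $e^{C\varepsilon^2 t}$. Uniqueness has to be posed in a class with a rate, as in Theorem~\ref{thm:wave-operator}.
\end{itemize}
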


\begin{remark}\leavevmode
\begin{enumerate}[(i)\,\,\,]
\item Unlike Theorem \ref{thm:A}, we need to impose stronger assumptions on the final data.
\item We only have local uniform convergence (or pointwise) convergence here, since we can only propagate the bounds of  $\norm{\theta \nabla_z\sigma(s=0)}{L^\infty_{z,w}}=\norm{\nabla_v\mu (t=0)}{L^\infty}$ (see Theorem \ref{thm:wave-operator}).
\end{enumerate}
\end{remark}

As a consequence, we can define the scattering operator in the neighborhood of the origin $\mathcal{S}:\mu_{-\infty}\rightarrow\mu_{+\infty}$ as in the following theorem:
\begin{theorem}\label{thm:c}
There exists $\varepsilon_0>0$ such that given any asymptotic state $\mu_{-\infty}\in \Sob{2}{\infty}_{x,v}$ with $E_{-\infty}=E[\mu_{-\infty}]\in W^{3,\infty}$ satisfying 
	the condition \eqref{eq:condition-mu-infinity},
there exists a unique strong solution $\mu$ of \eqref{eq:VP-rep-potential-reformulate}, $\mu_{+\infty}\in L^2_{x,v}\cap L^\infty_{x,v}$, and $E_{+\infty}=E[\mu_{+\infty}]\in L^\infty_x$ such that
\[   \mu(t,\mathcal{X}_{\pm}(t,x\mp v,v),\mathcal{V}_{\pm}(t,x\mp v,v))\rightarrow \mu_{\pm\infty}(x,v)\quad \text{as } t\rightarrow\pm\infty,       \]
where $\mathcal{X}_{+},\mathcal{V}_{+}$ corresponds to $E_{+\infty}$ in \eqref{eq:scattering-trajectory} and 
\begin{equation*}\label{eq:scattering-trajectory2}
	\begin{aligned}
 \mathcal{X_-}(t,x,v)&=x \cosh t +v\sinh t +\lambda t e^{t} E_{-\infty}(x-v), \\
 \mathcal{V_-}(t,x,v)&=v \cosh t +x\sinh t +\lambda t e^{t} E_{-\infty}(x-v).
\end{aligned}
\end{equation*} 
\end{theorem}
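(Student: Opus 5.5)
Theorem \ref{thm:c} will follow by composing Theorem \ref{thm:B}, run backward in time, with Theorem \ref{thm:A}, run forward. The bridge is the time-reversal symmetry of \eqref{eq:VP-rep-potential-reformulate}. If $\mu$ solves \eqref{eq:VP-rep-potential-reformulate}, then
\[
\tilde\mu(t,x,v):=\mu(-t,x,-v)
\]
also solves it. Indeed, $\partial_t\tilde\mu=-(\partial_t\mu)(-t,x,-v)$ and $\nabla_v\tilde\mu=-(\nabla_v\mu)(-t,x,-v)$, while $v\cdot\nabla_x$ picks up the sign from $v\mapsto -v$. Moreover $E[\tilde\mu](t)=E[\mu](-t)$, since the density is even in this reflection.

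\textbf{Step 1: transfer the trajectories.} I apply the reversal to the forward trajectory \eqref{eq:scattering-trajectory}. Let $\tilde\mu_\infty(x,v):=\mu_{-\infty}(x,-v)$, so that $E[\tilde\mu_\infty]=E_{-\infty}$. For $t\to-\infty$, write $s=-t\to+\infty$. A direct substitution gives
\[
\mathcal{X}(s,x-v,v)=\mathcal{X}_-(t,x+v,-v),\qquad \mathcal{V}(s,x-v,v)=-\mathcal{V}_-(t,x+v,-v),
\]
where $\mathcal X,\mathcal V$ are built with $E_{-\infty}$. To check this I use that $\cosh$ is even, $\sinh$ is odd, and $se^{-s}=-te^{t}$. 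The argument of $E$ matches because $x-v+v=x$ on the left and $(x+v)-(-v)\mapsto$ the same point on the right. Hence the $-\infty$ asymptotic condition for $\mu$ with state $\mu_{-\infty}$ is exactly the $+\infty$ condition \eqref{eq:asymptotic-local-uniform} for $\tilde\mu$ with state $\tilde\mu_\infty$. The main care is the sign of the $E$-correction and the shift $x\mp v$, so I will verify this identity before anything else.

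\textbf{Step 2: construct the solution.} The hypotheses \eqref{eq:condition-mu-infinity} are invariant under $v\mapsto -v$, so $\tilde\mu_\infty$ satisfies them. Theorem \ref{thm:B} then gives a unique global strong solution $\tilde\mu$ with $\tilde\mu_\infty$ as its $+\infty$ asymptotic state. Setting $\mu(t,x,v)=\tilde\mu(-t,x,-v)$ produces the required unique solution with $-\infty$ behavior governed by $\mu_{-\infty}$.

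\textbf{Step 3: scatter forward, the main obstacle.} I take $\mu_0:=\mu(0)$ and feed it into Theorem \ref{thm:A}. This requires checking \eqref{eq:initial-mu} for $\mu(0)$ with a small bound $C\varepsilon_0$. The bounds $\|\langle v\rangle\mu(0)\|_{L^2}$ and $\|\langle v\rangle^3\mu(0)\|_{L^\infty}$ should follow from conservation along characteristics. Since $\mathcal X,\mathcal V$ are symplectic up to small error, I expect these to be controlled by $\|\langle v\rangle\mu_{-\infty}\|_{L^2}$ and $\|\langle v\rangle^4\mu_{-\infty}\|_{L^\infty}$. The derivative bound $\|\langle v\rangle\nabla_{x,v}\mu(0)\|_{L^\infty}$ is the delicate point, because Theorem \ref{thm:B} only gives local uniform convergence. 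Here I rely on the bound stated in its proof, namely the propagated estimate $\|\theta\nabla_z\sigma(s=0)\|_{L^\infty}=\|\nabla_v\mu(t=0)\|_{L^\infty}$ from Theorem \ref{thm:wave-operator}, together with the analogous control of $\nabla_x$ and the $\langle v\rangle$ weight. All of these should be bounded by $C\varepsilon_0$ given the second-derivative and $W^{3,\infty}$ assumptions in \eqref{eq:condition-mu-infinity}.

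After shrinking $\varepsilon_0$ so that $C\varepsilon_0$ is within the smallness threshold of Theorem \ref{thm:A}, that theorem yields $\mu_{+\infty}\in L^2\cap L^\infty$ and $E_{+\infty}=E[\mu_{+\infty}]$. Its convergence \eqref{eq:scattering} is uniform, so in particular pointwise, and this gives the $+\infty$ statement. Uniqueness of the solution is inherited from Theorem \ref{thm:B}.
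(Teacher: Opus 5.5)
Your route is the paper's: use time reversal and Theorem~\ref{thm:B} to go from $-\infty$ to $t=0$, then apply Theorem~\ref{thm:A} forward. There is a slip in Step~1. The identities should read $\mathcal{X}(s,x-v,v)=\mathcal{X}_-(t,x-v,-v)$ and $\mathcal{V}(s,x-v,v)=-\mathcal{V}_-(t,x-v,-v)$. With $x+v$ in the first slot, the argument of $E_{-\infty}$ is $x+2v$, not $x$. With this correction the reduction to the $+\infty$ problem for $\tilde\mu$ is right.

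The real issue is Step~3: the estimate you cite does not give the $\langle v\rangle$ weight you need.
\begin{itemize}
\item At $s=0$ one has $F(0)=0$, so $\mu(0,x,v)=\sigma(0,x-v,v)$.
\item Also $\theta(0,z)=\langle z\rangle/(1+\langle z\rangle)\in[\tfrac12,1)$.
\item Hence $\|\nabla_w\sigma(0)\|_{L^\infty}+\|\theta\nabla_z\sigma(0)\|_{L^\infty}\lesssim c_0$ is only the unweighted bound $\|\nabla_{x,v}\mu(0)\|_{L^\infty}\lesssim c_0$.
\end{itemize}
The paper says exactly this in the proof of Theorem~\ref{thm:B} and the remark after it, and therefore claims only local uniform (pointwise) convergence. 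In the unweighted ($b=0$) case of Proposition~\ref{prop:bootstrap}, the factor $|\Phi^2(s,q,p)|\le|p|+|F(s)|\,\|E_1\|_{L^\infty}$ multiplying $\nabla_p\gamma$ in $\partial_s\nu$ is controlled only on bounded $p$-sets. So, as written, invoking Theorem~\ref{thm:A} verbatim and concluding uniform convergence with the rate \eqref{eq:scattering} is unsupported. Your ``should be bounded'' is exactly the step that needs an argument.

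That argument is available from bounds the paper does establish.
\begin{itemize}
\item The bootstrap \eqref{eq:bootstrap-assumptions-sigma} holds on $[-1,0]$. It gives $\|\langle z\rangle^4\sigma(0)\|_{L^\infty}\lesssim c_0$ and, since $\theta(0,\cdot)\ge\tfrac12$, the unweighted second-derivative bound $\|\nabla^2_{w,z}\sigma(0)\|_{L^\infty}\lesssim c_0$.
\item Apply Proposition~\ref{prop:interpolation} with $\ell=2$: part (ii) in the $z$ variable, and part (i) with weight in $z$ and derivative in $w$. This yields $\|\langle z\rangle^2\nabla_{w,z}\sigma(0)\|_{L^\infty}\lesssim\|\langle z\rangle^4\sigma(0)\|_{L^\infty}+\|\nabla^2_{w,z}\sigma(0)\|_{L^\infty}\lesssim c_0$.
\item The paper itself uses this interpolation to bound $\langle z\rangle^{1.6}\nabla_{w,z}\sigma$ in the proof of Lemma~\ref{lemma:bound-K}(iii).
\item Through $\mu(0,x,v)=\sigma(0,x-v,v)$, this gives $\|\langle v\rangle\nabla_{x,v}\mu(0)\|_{L^\infty}\lesssim c_0$.
\item The remaining quantities $\|\langle v\rangle\mu(0)\|_{L^2}$ and $\|\langle v\rangle^3\mu(0)\|_{L^\infty}$ come from the propagated $\langle z\rangle^m\sigma$ bounds in Lemma~\ref{lem:particle-density-bootstrap}, not from a vague ``symplectic up to small error'' heuristic.
\end{itemize}
With this step inserted, Theorem~\ref{thm:A} applies as stated. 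Your argument then gives uniform convergence at $+\infty$, which is more than the paper asserts. Without it, you must fall back on the paper's weaker unweighted version and settle for local uniform convergence.
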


\subsection{Idea of the proofs}
Our idea of the proofs comes from recent observations on the connection between kinetic equations and dispersive equations. It is well known that for the Vlasov--Poisson system, classical solutions exist globally for small initial data, whereas finite-time blow-up occurs in the four-dimensional Vlasov--Poisson system. Recently, many techniques from dispersive analysis have been adapted to reveal the dynamics of such solutions including the construction of blow-up solutions to the Vlasov--Poisson and its relativistic version \cite{LMR08,LMR08b} and the long-time behavior of solutions to the kinetic equations \cite{IPWW22,FOPW23}. Related to our problem \eqref{eq:VP-rep-potential-reformulate}, previous results \cite{VRVR24,BVRVR25} adapted the modified vector field method to \eqref{eq:VP-rep-potential-reformulate} in a natural coordinate that captures the hyperbolicity of the linearized system. However, the method requires stronger regularity assumptions to perform late-time asymptotic analysis of solutions, which has been widely studied in the general relativity community in recent years.

Our method is guided by the Hamiltonian structure of the Vlasov--Poisson system and its hidden symmetry, the pseudoconformal symmetries \cite{LMR08,FOPW23,HK25}. The pseudoconformal transform was observed to understand the long-time behavior of solutions to the mass-critical nonlinear Schr\"odinger equation. The transform connects scattering and the finite blow-up problem for this nonlinear Schr\"odinger equation. The kinetic version of the pseudoconformal transformation enables us to construct a finite-time blow-up solution to the Vlasov--Poisson system \cite{LMR08}. It also gives a complete characterization of the asymptotic dynamics of the solution \cite{FOPW23,HK25} by exhibiting modified scattering of solutions and constructing wave operators to the problem. However, this method cannot be applied directly to our problem, as the harmonic potential introduces new difficulties in the pseudoconformal transform, and the resulting equation is difficult to analyze.  

Motivated by the lens transformation in the nonlinear Schr\"odinger equation \cite{Tao09}, we introduce a kinetically adapted hyperbolic lens transform to convert the original problem into the Vlasov--Poisson type Hamiltonian system \eqref{eq:VP-lens}, which enables us to remove the effect of the harmonic potential. Unlike the hyperbolic lens transform that compactifies the whole time interval to $(-1,1)$, the pseudoconformal transform only compactifies the half time interval $[1,\infty)$ to $(0,1)$. This introduces a necessity to prove suitable local well-posedness at $t=0$, which forces \cite{FOPW23,HK25} to assume spatial polynomial decay in the initial data to show modified scattering of solutions to the Vlasov--Poisson system. 

To prove Theorem \ref{thm:A}, we first obtain a local well-posedness result (Theorem \ref{thm:lwp}) for the transformed problem \eqref{eq:VP-lens}, which does not need to assume a spatial moment condition on the initial data. Although the transport term $x\cdot\nabla_v\mu$ is present in \eqref{eq:VP-rep-potential-reformulate}, the lens transform enables us not to assume a spatial moment condition on the initial data compared to the previous results \cite{VRVR24,BVRVR25}.  Then, by assuming smallness of the initial data, we show that the solution exists globally and scatters using a bootstrap argument that propagates the velocity moments of the solution, treating the equation as a nonlinear transport equation (Proposition \ref{prop:bootstrap}) on $[0,1)$.

The proofs of Theorems \ref{thm:B} and \ref{thm:c}, which construct a wave operator and a scattering map for the problem, also use the hyperbolic lens transformation. Similar to the pseudoconformal transformation, the lens transformation also preserves the Hamiltonian structure. The transformation converts the original problem into the local well-posedness problem starting from $s=1$ or $s=-1$. However, due to the transformation, the new system has a Hamiltonian having singularity at those points. To mitigate this singularity, we introduce a change of coordinates that preserves the symplectic structure, and the new Hamiltonian is less singular at those points. Then the problem is reduced to showing the local well-posedness for this new Hamiltonian PDE \eqref{eq:sigma}. Unlike the previous local well-posedness result (Theorem \ref{thm:lwp}), we need appropriate weights to estimate derivatives of solutions to \eqref{eq:sigma} which enables us to get a priori estimate for solutions. Then the solution can be constructed via a standard Picard iteration. Theorem \ref{thm:c} will follow by combining Theorems \ref{thm:A} and \ref{thm:B}.

\subsection{Organization} 
The rest of the paper is organized as follows. In Section \ref{sec:prelim}, we introduce the lens transform for the Vlasov--Poisson system. We also collect several auxiliary lemmas that will be used throughout the paper. Section \ref{sec:LWP} is devoted to the proof of the local well-posedness result \eqref{eq:VP-lens}. In Section \ref{sec:scattering}, we establish Theorem \ref{thm:A} via a bootstrap argument. Finally, in Section \ref{sec:wave}, we construct the wave operator and prove Theorems \ref{thm:B} and \ref{thm:c}.

We finish the introduction with notations. Let $\mathbb{R}^d$ denote the standard Euclidean space of points $x=(x^1,\dots,x^d)$ and let $\action{x}=(1+|x|^2)^{1/2}$. Let $B_R$ be the Euclidean open ball with radius $R$ centered at the origin. By $C_c^\infty$ the space of all smooth functions with compact support.  For two nonnegative quantities $A$ and $B$, we write $A\apprle_{\alpha,\beta,\dots} B$ if $A\leq CB$ for some positive constant $C$ that depends on the parameters $\alpha$, $\beta$, .... If the dependence is evident, then we usually omit the subscripts and denote $A\apprle B$. We write $A\approx B$ if $A\apprle B$ and $B\apprle A$. Finally, we frequently suppress the time variable when it involves norms, e.g., 
\[ \norm{\mu(s)}{\Leb{r}_{q,p}}=\norm{\mu}{\Leb{r}_{q,p}}.\]

\section{Preliminaries}\label{sec:prelim}

In this section, we first introduce the lens transform for the Vlasov--Poisson system, which connects it to the Vlasov--Poisson system with harmonic potentials. Next, we provide estimates on the electric fields, which are crucial for establishing global well-posedness of the problem and the scattering of solutions. Finally, we recall interpolation inequalities that involve weights and derivatives.

\subsection{Hyperbolic lens transform}

Inspired by the lens transformation in the nonlinear Schr\"odinger equation with harmonic potentials, we define  hyperbolic lens transform adapted to the Vlasov--Poisson system with repulsive harmonic potentials (see e.g. \cite{Tao09}).

For $(t,x,v)\in \mathbb{R}\times\mathbb{R}^d\times\mathbb{R}^d$, we define 
\[ \mathcal{T}:(t,x,v)\mapsto \left(\tanh t,\frac{x}{\cosh t},v\cosh t-x\sinh t\right) \]
and its inverse
\[ \mathcal{T}^{-1}:(s,q,p)\mapsto \left(\arctanh s,\frac{q}{\sqrt{1-s^2}},\frac{sq}{\sqrt{1-s^2}}+p\sqrt{1-s^2}\right),\]
where $(s,q,p)\in (-1,1)\times\mathbb{R}^d\times \mathbb{R}^d$. The Jacobian of $\mathcal{T}$ is given by 
\begin{equation}\label{eq:Jacobian} \frac{\partial (q,p)}{\partial (x,v)}=\begin{bmatrix*}
\dfrac{1}{\cosh t}I_d & 0 \\[2ex]
-(\sinh t)I_d& (\cosh t)I_d
\end{bmatrix*},
\end{equation}
where $I_d$ denotes the $d\times d$ identity matrix.

If we write $\mu=\gamma\circ\mathcal{T}$, then a direct computation gives
\begin{equation}\label{eq:computation-rules-lens}
\begin{aligned}
\partial_t \mu &=\frac{1}{\cosh^2 t}\partial_s\gamma-\frac{\sinh t}{\cosh^2 t}x\cdot\nabla_q \gamma+(v\sinh t-x\cosh t)\cdot \nabla_p \gamma,\\
\nabla_x\mu&=\frac{1}{\cosh t} \nabla_q\gamma-(\sinh t)\nabla_p\gamma,\\
\nabla_v\mu&=(\cosh t)\nabla_p\gamma,\\
E[\mu](t,x)&=\frac{1}{(\cosh t)^{d-1}} E[\gamma]\left(\tanh t,\frac{x}{\cosh t}\right).
\end{aligned}
\end{equation}

 By \eqref{eq:Jacobian} and \eqref{eq:computation-rules-lens}, one can also see that the lens transform is symplectic  and  volume-preserving. Moreover, $\mu$ solves \eqref{eq:VP-rep-potential-reformulate} if and only if $\gamma$ solves
\begin{equation}\label{eq:VP-lens}
\partial_s \gamma+p\cdot\nabla_q\gamma+(1-s^2)^{(d-4)/2}\lambda E[\gamma]\cdot\nabla_p\gamma=0,\quad s\in (-1,1),
\end{equation}
which can be also rewritten in the Hamilonian form
\begin{equation}\label{eq:Hamiltonian-gamma} 
\pr_s\gamma+\{\gamma,\H\}=0, \quad \mathcal{H}=\frac{|p|^2}{2}-\lambda f(s)\phi(s,q).		 
\end{equation}
Note that $(1-s^2)^{(d-4)/2}$ is integrable on $(-1,1)$ if $d\geq 3$. If we can show that the electric field is bounded, then we will easily get the scattering result, see Remark \ref{rem:linear-scattering}. For this reason, we restrict our attention to $d=2$. 

From now on, for simplicity, we define the function $f:(-1,1)\rightarrow\mathbb{R}$ by 
\[ f(s)=\frac{1}{1-s^2} \]
and its antiderivative 
\[ F(s)=\arctanh s = \frac{1}{2}\ln \left(\frac{1+s}{1-s}\right),\quad -1<s<1.\]
\begin{remark}
	One can also replace all the hyperbolic functions by trigonometric functions and get  (trigonometric) lens transform: for $(t,x,v)\in (-\pi/2,\pi/2)\times\R^d\times\R^d$, we define
$$ \widetilde{\T}:(t,x,v)\mapsto \of{ \tan t,\frac{x}{\cos t}, x\sin t+v\cos t}   .$$
This transform is also symplectic and if we write $\mu=\gamma\circ \widetilde{\T}$, then a direct computation gives that $\mu$ solves the Vlasov--Poisson system with an attractive harmonic potential
\begin{equation*} 
		\partial_t \mu + v\cdot \nabla_x \mu-x\cdot \nabla_v\mu +\lambda E[\mu]\cdot\nabla_v\mu=0
\end{equation*}
if and only if $\gamma$ solves
\begin{equation}\label{eq:VP-lens2}
	\pr_s\gamma+p\cdot\nabla_q\gamma+ (1+s^2)^{(d-4)/2}\lambda E[\gamma]\cdot\nabla_p\gamma=0	.
\end{equation}  
\end{remark}
\begin{remark}
	By applying the inverse lens transform, one can reformulate the Vlasov--Poisson system into a time-dependent Vlasov--Poisson system with an attractive harmonic potential. A key advantage of this transformation is that it compactifies the time interval from $(-\infty, \infty)$ to $(-\pi/2, \pi/2)$.
\end{remark}

We will frequently use the following lemma throughout this paper.
\begin{lemma}\label{lem:Liouville}
Suppose that $r\in [1,\infty]$ and $\gamma \in C([0,T];\Leb{r}_{q,p})$ is a solution to 
\[
\partial_s \gamma+ \{\gamma,\mathcal{K}\}=g,\quad \gamma(s=0)=\gamma_0
\]
for some $\gamma_0 \in \Leb{r}_{q,p}$. Then we have 
\[ \norm{\gamma(s)}{\Leb{r}_{q,p}}\leq \norm{\gamma_0}{\Leb{r}_{q,p}}+\int_0^s \norm{g(\tau)}{\Leb{r}_{q,p}}\myd{\tau},\quad s\in [0,T].\]
\end{lemma}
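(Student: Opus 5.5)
The plan is to use the method of characteristics together with Liouville's theorem, i.e.\ the volume preservation of the Hamiltonian flow generated by $\mathcal{K}$. Assume $\mathcal{K}(s,q,p)$ is regular enough (e.g.\ $C^1$ in $(q,p)$ with locally Lipschitz gradient) that the characteristic system
\[
\dot Q=\nabla_p\mathcal{K}(s,Q,P),\qquad \dot P=-\nabla_q\mathcal{K}(s,Q,P)
\]
has a unique global flow on $[0,T]$. Write $\Phi_{s}$ for the map sending $(q,p)$ at time $0$ to $(Q(s),P(s))$. This is the case in every application in the paper: there $\mathcal{K}$ is $|p|^2/2-\lambda f(s)\phi$ with a bounded, Lipschitz field on the relevant interval.

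\textbf{Step 1: Duhamel formula along characteristics.} For any trajectory $(Q(s),P(s))$, the chain rule gives
\[
\frac{d}{ds}\gamma(s,Q(s),P(s))=\partial_s\gamma+\nabla_q\gamma\cdot\nabla_p\mathcal{K}-\nabla_p\gamma\cdot\nabla_q\mathcal{K}=g(s,Q(s),P(s)).
\]
Integrating in $s$ yields
\[
\gamma(s)\circ\Phi_s=\gamma_0+\int_0^s g(\tau)\circ\Phi_\tau\,d\tau .
\]

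\textbf{Step 2: volume preservation.} The vector field $(\nabla_p\mathcal{K},-\nabla_q\mathcal{K})$ is divergence free, so Liouville's theorem gives $\det D\Phi_s=1$. Hence each $\Phi_s$, and likewise $\Phi_s\circ\Phi_\tau^{-1}$, is a measure-preserving diffeomorphism. It follows that $\norm{h\circ\Phi_s}{L^r_{q,p}}=\norm{h}{L^r_{q,p}}$ for every $r\in[1,\infty]$.

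\textbf{Step 3: Minkowski.} Take the $L^r_{q,p}$ norm of the Duhamel identity. Minkowski's integral inequality moves the norm inside the $\tau$-integral, and Step 2 removes the compositions. This gives exactly
\[
\norm{\gamma(s)}{L^r_{q,p}}\le\norm{\gamma_0}{L^r_{q,p}}+\int_0^s\norm{g(\tau)}{L^r_{q,p}}\,d\tau .
\]

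\textbf{Main obstacle: low-regularity solutions.} The only real difficulty is justifying Step 1 when $\gamma$ is merely a $C([0,T];L^r)$ solution rather than a classical one. For this I would interpret the equation in the distributional sense and argue as follows.
\begin{enumerate}[(i)]
\item Mollify $\gamma_0$ and $g$, and solve the equation classically with these smoothed data via the characteristic formula.
\item Show that the classical solutions converge to $\gamma$ in $C([0,T];L^r)$. This uses uniqueness of weak solutions of the linear transport equation with a Lipschitz, divergence-free field (DiPerna--Lions, or the explicit characteristic representation).
\item Pass to the limit in the estimate.
\end{enumerate}
For $r=\infty$, use the pointwise bound directly, or apply the $L^r$ estimate on bounded sets and let $r\to\infty$.
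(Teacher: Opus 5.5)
Your proposal is correct and follows the paper's argument, written out in more detail. The paper sets $V=(\nabla_p\mathcal{K},-\nabla_q\mathcal{K})$, notes $\Div_{q,p}V=0$ so that $\partial_s\gamma+\Div_{q,p}(\gamma V)=g$, and invokes the method of characteristics and Liouville's theorem; that is exactly your Duhamel formula along the flow, measure preservation of $\Phi_s$, and Minkowski's inequality. Your discussion of the flow's existence and of merely $C([0,T];L^r_{q,p})$ solutions supplies justification the paper leaves implicit, with one small caveat. Your aside that every application has a bounded, Lipschitz field is too optimistic for Section 5: there the lemma is used from $s=-1$ with the transformed Hamiltonian $\mathcal{K}=-\lambda f(s)[\phi(s,q)-\phi_{-1}(w)]$, and $|\nabla_w\mathcal{K}|$ is only bounded by $c_0^2\{\min\{(1+s)^{-1},|z|\}+\langle F(s)\rangle^3\}$. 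This field is still locally Lipschitz with linear growth in $z$ and a time-integrable coefficient, so the flow exists globally and your argument goes through.
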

\begin{proof}
Define $V^i=\partial_{p^i}\mathcal{K}$ and $V^{i+d}=-\partial_{q^i}\mathcal{K}$, $i=1,\dots, d$. Then $\Div_{q,p} V=0$ and $\gamma$ satisfies 
\[ \partial_s \gamma+ \Div_{q,p}(\gamma V)=g,\quad \gamma(s=0)=\gamma_0.\]
Then the result follows from the method of characteristics and Liouville's theorem.
\end{proof}

\subsection{Estimates on the electric fields}
To estimate the electric field, we will use the following decomposition of the electric field throughout this paper: we have
\begin{equation}\label{eq:electric-field-rep}
	E(s,q)=\int_0^\infty E_R(s,q)\frac{\myd{R}}{R^2},	
\end{equation}
where
\begin{equation*}
	\begin{split}
		E_R(s,q)=c_1\iint_{\mathbb{R}^2_y\times\mathbb{R}^2_v} (\nabla\chi)(R^{-1}(q-y))\gamma^2(s,y,v)\myd{y}\myd{V}, 
	\end{split}
\end{equation*}
for some constant $c_1>0$ and $\chi \in C_c^\infty(B_2)$ is a radially symmeric function such that $\supp \chi \subset B_2\setminus B_{1/2}$ and $\int_{\mathbb{R}^2} \chi \myd{x}=1$. This decomposition follows from
\begin{equation*} 
	\frac{1}{|x-y|}=c\int_0^\infty R^{-1}\chi(R^{-1}(x-y))\frac{\myd{R}}{R}.	 
\end{equation*}
for some consnant $c>0$. Sometimes, we will also need to decompose the electric field in the velocity variable, which leads to
\begin{align*}
	E_{R,V}(s,q):=\iint_{\mathbb{R}^2_y\times\mathbb{R}^2_v} (\nabla\chi)(R^{-1}(q-y))\chi(V^{-1}v)\gamma^2(s,y,v)\myd{y}\myd{v}
\end{align*}
with 
\begin{equation}\label{eq:electric-field-rep-2}
	E(s,q)=\int_0^\infty\int_0^\infty E_{R,V}(s,q)\frac{\myd{R}}{R^2}\frac{\myd{V}}{V}.
\end{equation}

Since $\supp \chi \subset B_2\setminus B_{1/2}$, we can easily show the following bounds (see e.g. \cite[Section 2.2]{HK25}):
\begin{lemma}\label{lem:estimate-E_RV}
Fix $s\in (-1,1)$ and $q\in\R^2$. Then we have
\begin{align*}
&|E_R(s,q)|\apprle \norm{\gamma}{L^2_{q,p}}^2,\quad |\nabla E_R(s,q)|\apprle R^{-1}\norm{\gamma}{L^2_{q,p}}^2, \\	
&|E_{R,V}(s,q)|\apprle R^2\min\{V^2\norm{\gamma}{L^\infty_{q,p}}^2,V^{-2}\norm{|p|^2\gamma}{L^\infty_{q,p}}^2\},\\
&|\nabla E_{R,V}(s,q)|\apprle R\min\{RV^2\norm{\gamma}{L^\infty_{q,p}}\norm{\nabla_q\gamma}{L^\infty_{q,p}},V^{-2}\norm{|p|^2\gamma}{L^\infty_{q,p}}^2\}
\end{align*}
for all $R,V>0$.
\end{lemma}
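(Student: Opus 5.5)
The plan is to estimate each quantity directly from the definitions of $E_R$ and $E_{R,V}$, using only that $\nabla\chi$ and $\chi$ are bounded and supported in $B_2\setminus B_{1/2}$. All bounds are uniform in $s$ and $q$, so we fix them throughout.

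\emph{Bounds for $E_R$.} Since $|\nabla\chi|\le C$, we get
\[
|E_R(s,q)|\le C\iint\gamma^2\,\myd{y}\myd{v}=C\norm{\gamma}{L^2_{q,p}}^2 .
\]
Differentiating in $q$ produces $R^{-1}(\nabla^2\chi)(R^{-1}(q-y))$. The same argument then gives $|\nabla E_R|\apprle R^{-1}\norm{\gamma}{L^2}^2$.

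\emph{Bounds for $|E_{R,V}|$.} The integrand is supported in $|q-y|\le 2R$ and $V/2\le|v|\le 2V$. The first option is to bound $\gamma^2\le\norm{\gamma}{L^\infty}^2$; the support then has measure $\apprle R^2V^2$, which gives $R^2V^2\norm{\gamma}{L^\infty}^2$. The second option uses $|v|\ge V/2$ to write
\[
\gamma^2\le |v|^{-4}\norm{|p|^2\gamma}{L^\infty}^2\apprle V^{-4}\norm{|p|^2\gamma}{L^\infty}^2 .
\]
Multiplying by the measure $R^2V^2$ gives $R^2V^{-2}\norm{|p|^2\gamma}{L^\infty}^2$. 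Taking the minimum of the two bounds yields the claim.

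\emph{Bounds for $\nabla E_{R,V}$.} For the second entry of the minimum, we differentiate the kernel. This produces the factor $R^{-1}\nabla^2\chi$, and repeating the second option above gives $R^{-1}\cdot R^2V^{-2}\norm{|p|^2\gamma}{L^\infty}^2=RV^{-2}\norm{|p|^2\gamma}{L^\infty}^2$, as stated. For the first entry, differentiating the kernel would lose a factor $R^{-1}$. Instead, we write the kernel as a function of $q-y$ and move the derivative onto $\gamma^2$ by integrating by parts in $y$:
\[
\nabla_qE_{R,V}=\iint(\nabla\chi)(R^{-1}(q-y))\,\chi(V^{-1}v)\,2\gamma\nabla_y\gamma\,\myd{y}\myd{v}.
\]
Boundary terms vanish because the kernel is compactly supported in $y$. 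Bounding the integrand by $\norm{\gamma}{L^\infty}\norm{\nabla_q\gamma}{L^\infty}$ over a support of measure $\apprle R^2V^2$ gives $R^2V^2\norm{\gamma}{L^\infty}\norm{\nabla_q\gamma}{L^\infty}=R\cdot RV^2(\cdots)$, matching the statement.

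The only point requiring care is this integration by parts. It needs enough regularity of $\gamma$ (the $C^1$ solutions considered here, or a density argument) and uses the compact support of $\nabla\chi$. The remaining estimates are routine support-size counting.
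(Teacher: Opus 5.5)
Your proof is correct, and it is essentially the argument the paper intends. The paper gives no written proof: it only remarks that the bounds follow from $\supp\chi\subset B_2\setminus B_{1/2}$ (deferring to \cite[Section 2.2]{HK25}), which is exactly your support-size counting, with the $q$-derivative moved onto $\gamma^2$ by integration by parts in $y$ for the $\norm{\gamma}{L^\infty_{q,p}}\norm{\nabla_q\gamma}{L^\infty_{q,p}}$ entry.
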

As an immediate consequence, we can bound the electric field by weighted-$L^2$ and $L^\infty$ norm of $\gamma$.

\begin{lemma}\label{lem:estimate-E}
For any $A>0$, $s\in\mathbb{R}$, and $q\in \mathbb{R}^2$, we have 
\begin{equation}\label{eq:boundE}
|E(s,q)|\apprle A^{-1} \left[\norm{\gamma}{\Leb{2}_{q,p}}^2+\norm{\gamma}{\Leb{\infty}_{q,p}}^2\right]+A^{3} \norm{|p|^2 \gamma}{\Leb{\infty}_{q,p}}^2.
\end{equation}
Also, for $0<\theta<1/2$, we have 
\begin{equation}\label{eq:bound-dE}
\begin{aligned}
|\nabla_q E(s,q)|&\apprle A\norm{\gamma}{\Leb{2}_{q,p}}^2+A^{-1/2+\theta}\norm{\gamma}{\Leb{\infty}_{q,p}}\norm{\nabla_q \gamma}{\Leb{\infty}_{q,p}}+A^{-\theta}\norm{|p|^2\gamma}{\Leb{\infty}_{q,p}}^2.
\end{aligned}
\end{equation}
\end{lemma}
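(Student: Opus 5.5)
We split the $R$-integral in \eqref{eq:electric-field-rep-2} (and \eqref{eq:electric-field-rep}) at thresholds depending on $A$, and use Lemma \ref{lem:estimate-E_RV} on each piece. Each piece is then a direct integration of powers of $R$ and $V$.

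\emph{Bound \eqref{eq:boundE}.} We write $E=\int_0^\infty E_R\,R^{-2}\myd{R}$ and split at $R=A$.
\begin{itemize}
\item For $R\ge A$ we use $|E_R|\apprle \norm{\gamma}{L^2_{q,p}}^2$. Since $\int_A^\infty R^{-2}\myd{R}=A^{-1}$, this gives $A^{-1}\norm{\gamma}{L^2}^2$.
\item For $R\le A$ we use the velocity decomposition \eqref{eq:electric-field-rep-2} with the bound $|E_{R,V}|\apprle R^2\min\{V^2\norm{\gamma}{L^\infty}^2,V^{-2}\norm{|p|^2\gamma}{L^\infty}^2\}$. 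After the factor $R^{-2}$, the $R$-integral over $(0,A)$ contributes a factor $A$.
\item For the $V$-integral we split at a second threshold $V_0$. This gives $\int_0^\infty\min\{V^2a^2,V^{-2}b^2\}\frac{dV}{V}\apprle V_0^2a^2+V_0^{-2}b^2$, with $a=\norm{\gamma}{L^\infty}$ and $b=\norm{|p|^2\gamma}{L^\infty}$.
\item Choosing $V_0=A^{-1}$ gives $A\cdot(A^{-2}a^2+A^2b^2)=A^{-1}a^2+A^3b^2$, which is the claim.
\end{itemize}
The only care needed here is bookkeeping which threshold applies to which variable.

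\emph{Bound \eqref{eq:bound-dE}.} We differentiate under the integral, $\nabla E=\int\nabla E_R\,R^{-2}\myd{R}$, and split the $R$-integral at $R=A^{-1}$.
\begin{itemize}
\item For large $R$ we use $|\nabla E_R|\apprle R^{-1}\norm{\gamma}{L^2}^2$. Then $\int_{A^{-1}}^\infty R^{-3}\myd{R}\approx A^2$, which is off by a power of $A$.
\item It is cleaner to split at $R=A^{-1/2}$, giving exactly $A\norm{\gamma}{L^2}^2$.
\item For $R\le A^{-1/2}$ we use the $(R,V)$ decomposition with $|\nabla E_{R,V}|\apprle \min\{R^2V^2ac,\;RV^{-2}b^2\}$, where $c=\norm{\nabla_q\gamma}{L^\infty}$. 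After the factor $R^{-2}$ this becomes $\min\{V^2ac,\;R^{-1}V^{-2}b^2\}$.
\end{itemize}

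The main obstacle is that the $R$-integral of $R^{-1}$ near $0$ diverges logarithmically. We therefore cannot take the pure second branch for all small $R$, and must interpolate the minimum with an $R$-dependent exponent.

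\begin{itemize}
\item We use $\min\{X,Y\}\le X^{\alpha}Y^{1-\alpha}$ with $\alpha=1/2-\theta$ and $1-\alpha=1/2+\theta$.
\item This gives $V^{2\alpha-2(1-\alpha)}R^{-(1-\alpha)}(ac)^{\alpha}b^{2(1-\alpha)}$, i.e. $V$-exponent $-4\theta$ and $R$-exponent $-(1/2+\theta)>-1$, so the $R$-integral converges near $0$.
\item However, the $V$-integral against $\frac{dV}{V}$ is not finite with a single fixed exponent. Instead we split in $V$ at a threshold $V_0(R)$: for $V\le V_0$ we use the first branch, for $V\ge V_0$ the second.
\item This yields $V_0^2ac+R^{-1}V_0^{-2}b^2$. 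Choosing $V_0^2=R^{-1/2-\theta}$ gives $R^{-1/2-\theta}ac+R^{-1/2+\theta}b^2$.
\item Integrating over $R\in(0,A^{-1/2})$ gives $A^{-(1/2-\theta)/2}ac+A^{-(1/2+\theta)/2}b^2$.
\end{itemize}

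These exponents do not match \eqref{eq:bound-dE} exactly. The final step is to adjust the threshold choices (the $R$-split point and the form of $V_0(R)$) so that the powers come out as $A^{-1/2+\theta}$ and $A^{-\theta}$. Since the stated form allows freedom in $\theta$ and $A$, matching them is routine once the general scheme converges. The genuine difficulty is this $V$-split tuned to the $R$-scale, which avoids the logarithmic divergence at $R\to0$ and makes the $\theta<1/2$ restriction appear naturally.
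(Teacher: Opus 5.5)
Your proof of \eqref{eq:boundE} is the paper's argument: split $R$ at $A$, split $V$ at $B=A^{-1}$. Your scheme for \eqref{eq:bound-dE} is also the paper's: split $R$ at $A^{-1/2}$, then split $V$ at an $R$-dependent threshold. However, the proof of \eqref{eq:bound-dE} is not finished.

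With your choice $V_0(R)^2=R^{-1/2-\theta}$ you obtain
\[
A^{-1/4+\theta/2}\,ac+A^{-1/4-\theta/2}\,b^2 .
\]
This is the stated inequality with $\theta$ replaced by $\theta'=\tfrac14+\tfrac{\theta}{2}$. So your argument only proves \eqref{eq:bound-dE} for $\theta'\in(1/4,1/2)$. It misses, for instance, the case $\theta=1/5$ used in \eqref{eq:gradient-E-control}.

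Your remark that ``the stated form allows freedom in $\theta$'' does not rescue this. The exponent $\theta\in(0,1/2)$ is given, and the bound must be proved for each such $\theta$. Deferring the matching as ``routine'' leaves exactly the step that produces the claimed exponents undone.

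The fix is one line, and it is the paper's choice. Keep the $R$-split at $A^{-1/2}$; this is forced by the term $A\|\gamma\|_{L^2}^2$, so only $V_0(R)$ needs adjusting. Take $V_0(R)=R^{-\theta}$. The small-$R$ integrand then becomes
\[
R^{-2\theta}\,ac+R^{-1+2\theta}\,b^2 ,
\]
which is integrable at $R=0$ precisely when $0<\theta<1/2$; this is where the restriction on $\theta$ comes from. Integrating over $(0,A^{-1/2})$ gives
\[
(1-2\theta)^{-1}A^{-1/2+\theta}\,ac+(2\theta)^{-1}A^{-\theta}\,b^2 ,
\]
which together with $\tfrac12 A\|\gamma\|_{L^2}^2$ from $R\ge A^{-1/2}$ is \eqref{eq:bound-dE}.

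More generally, $V_0^2=R^{-\beta}$ yields the stated bound with $\theta=\beta/2$. Your choice $\beta=\tfrac12+\theta$ covers only $\beta\in(1/2,1)$, whereas you need every $\beta\in(0,1)$.
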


\begin{remark}
In particular, if $A=\action{F(s)}^4$ and $\theta=1/5$, then we have
\begin{equation}\label{eq:gradient-E-control}
\begin{aligned}
|\nabla E(s,q)|&\apprle \action{F(s)}^4\norm{\gamma}{\Leb{2}_{q,p}}^2+\action{F(s)}^{-6/5} \norm{\gamma}{\Leb{\infty}_{q,p}}\norm{\nabla_q \gamma}{\Leb{\infty}_{q,p}}\\
&\relphantom{=}+\action{F(s)}^{-1/5}\norm{|p|^2\gamma}{\Leb{\infty}_{q,p}}^2.
\end{aligned}
\end{equation}
\end{remark}

\begin{proof}
By Lemma \ref{lem:estimate-E_RV}, we deduce that
\begin{align*}
|E(s,q)|&\apprle\int_{R=A}^\infty|E_R|\frac{\myd{R}}{R^2}+\int_{R=0} ^A\int_{V=0}^B |E_{R,V}|\frac{\myd{V}}{V}\frac{\myd{R}}{R^2}+\int_{R=0}^A\int_{V=B}^\infty|E_{R,V}|\frac{\myd{V}}{V}\frac{\myd{R}}{R^2}\\
&\apprle A^{-1}\norm{\gamma}{L^2_{q,p}}^2+AB^2\norm{\gamma}{L^\infty_{q,p}}^2+AB^{-2}\norm{|p|^2\gamma}{L^\infty_{q,p}}.	
\end{align*}
By choosing $B=A^{-1}$, we get \eqref{eq:boundE}.

To show \eqref{eq:bound-dE}, it follows from Lemma \ref{lem:estimate-E_RV} and $0<\theta<1/2$ that
\begin{align*}
|\nabla E(s,q)|&\apprle \int_{R=A^{-1/2}}^\infty |\nabla E_R| \frac{\myd{R}}{R^2}+\int_{R=0}^{A^{-1/2}}\int_{V=0}^{R^{-\theta}} |\nabla E_{R,V}|\frac{\myd{V}}{V}\frac{\myd{R}}{R^2}+\int_{R=0}^A\int_{V=R^{-\theta}}^\infty |\nabla E_{R,V}|\frac{\myd{V}}{V}\frac{\myd{R}}{R^2}\\
&\apprle A\norm{\gamma}{\Leb{2}_{q,p}}^2+A^{-1/2+\theta}\norm{\gamma}{\Leb{\infty}_{q,p}}\norm{\nabla_q\gamma}{\Leb{\infty}_{q,p}}+A^{-\theta} \norm{|p|^2\gamma}{\Leb{\infty}_{q,p}}^2.
\end{align*}
This completes the proof of Lemma \ref{lem:estimate-E}.
\end{proof}

\subsection{Interpolation inequalities}

We will use the following interpolation inequalities in the construction of wave operators.
\begin{proposition}\label{prop:interpolation}
Let $d\geq 2$ and $\ell\geq 0$. Suppose that $f:\mathbb{R}^d_x\times\mathbb{R}^d_v\rightarrow\mathbb{R}$. 
\begin{enumerate}[\rm (i)\,]
\item We have
\[ \norm{\action{x}^\ell\nabla_v f}{\Leb{\infty}_{x,v}}\apprle \norm{\action{x}^{2\ell}f}{\Leb{\infty}_{x,v}}+\norm{\nabla_v^2f}{\Leb{\infty}_{x,v}}.\]
In particular, when $s=0$, we have
\[ \norm{Df}{\Leb{\infty}_{x,v}}\apprle \norm{f}{\Leb{\infty}_{x,v}}^{1/2}\norm{D^2f}{\Leb{\infty}_{x,v}}^{1/2},\quad D\in\{\nabla_x,\nabla_v\}.\]
\item We have
\[ \norm{\action{x}^\ell \nabla_x f}{\Leb{\infty}_{x,v}}\apprle \norm{\action{x}^{2\ell}f}{\Leb{\infty}_{x,v}}+\norm{\nabla^2_x f}{\Leb{\infty}_{x,v}}.\]
\end{enumerate} 
\end{proposition}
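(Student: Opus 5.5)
The plan is to prove both parts by a one-dimensional Taylor-type interpolation argument, applied pointwise at a fixed $(x,v)$ with a step size that depends on $\action{x}$. For (i), fix $(x,v)$, a unit vector $e\in\R^d$, and a step $h>0$. Taylor's formula in the $v$-variable gives
\[
f(x,v+he)=f(x,v)+h\,e\cdot\nabla_v f(x,v)+\int_0^h (h-\tau)\,e^{T}\nabla_v^2 f(x,v+\tau e)\,e\myd{\tau}.
\]
This yields
\[
|e\cdot\nabla_v f(x,v)|\le \frac{2}{h}\sup_{w}|f(x,w)|+\frac{h}{2}\norm{\nabla_v^2 f}{\Leb{\infty}_{x,v}}.
\]
Since only $v$ moves, the $x$-argument is unchanged, and so $\sup_w|f(x,w)|\le \action{x}^{-2\ell}\norm{\action{x}^{2\ell}f}{\Leb{\infty}_{x,v}}$. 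Multiplying by $\action{x}^\ell$ and choosing $h=\action{x}^{-\ell}$ gives
\[
\action{x}^\ell|\nabla_v f(x,v)|\apprle \action{x}^{\ell}\action{x}^{\ell}\action{x}^{-2\ell}\norm{\action{x}^{2\ell}f}{\Leb{\infty}_{x,v}}+\action{x}^{\ell}\action{x}^{-\ell}\norm{\nabla_v^2f}{\Leb{\infty}_{x,v}},
\]
which is the claim. For the case $\ell=0$ (the statement's ``$s=0$''), we use the same inequality with $h$ free and optimize, taking $h=(\norm{f}{\Leb{\infty}}/\norm{D^2f}{\Leb{\infty}})^{1/2}$. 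The degenerate cases $\norm{D^2 f}{\Leb{\infty}}=0$ or $\infty$ are trivial: in the first, $f$ is affine in that direction and bounded, so $Df=0$. The identical argument works for $D=\nabla_x$.

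For (ii), the only new point is that we now move in $x$, so the weight evaluated at $x+he$ differs from the weight at $x$. We take $h=\action{x}^{-\ell}\le 1$ and observe that for $|y-x|\le 1$ we have $\action{y}\approx\action{x}$, with constants independent of $x$; this holds since $\action{x+z}\le \sqrt2\action{x}\action{z}$ (Peetre). Hence
\[
\sup_{|y-x|\le h}|f(y,v)|\apprle \action{x}^{-2\ell}\norm{\action{x}^{2\ell}f}{\Leb{\infty}_{x,v}}.
\]
The Taylor expansion along $x\mapsto x+\tau e$ with $0\le\tau\le h$ then gives exactly the same bound as in (i), with the implicit constant now depending on $\ell$.

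I expect no real obstacle here. The only care point is the comparability $\action{x+he}\approx\action{x}$ in (ii), which requires $h\le1$; this is automatic for $\ell\ge0$. A minor point is to make sure the $\Leb{\infty}$ norms are interpreted as essential suprema, for example by first assuming $f\in C^2$ and then passing to the general case by mollification.
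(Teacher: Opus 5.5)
Your proof is correct. For (ii) it is essentially the paper's argument: a Taylor expansion in $x$ with step $h=\langle x\rangle^{-\ell}$, combined with the comparability $\langle x+he\rangle\approx\langle x\rangle$. The paper phrases this as boundedness of $\langle x\rangle^{2\ell}/\langle x+\langle x\rangle^{-\ell}e_j\rangle^{2\ell}$, and your Peetre bound makes that constant explicit.

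For (i) you take a different and more elementary route. The paper does not prove (i) directly; it refers to a Littlewood--Paley argument from the authors' earlier work on Vlasov--Riesz systems. You instead run the same one-dimensional Taylor argument with $x$ frozen. This case is actually easier than (ii):
\begin{enumerate}
\item the weight never moves, so no comparison of $\langle x\rangle$ at different points is needed;
\item any $h>0$ is admissible, and the implicit constant is independent of $\ell$;
\item the freedom in $h$ gives the multiplicative $\ell=0$ estimate directly by optimizing. The paper leaves this implicit; it would otherwise follow from the additive estimate by rescaling the relevant variable.
\end{enumerate}
Your approach therefore gives a self-contained, unified proof of both parts. The Littlewood--Paley route would be the more robust tool for fractional, H\"older/Besov-type, or other $L^p$ variants, but nothing of that kind is needed for this statement.
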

\begin{proof}
(i) can be proved by using Littlewood-Paley projection as in \cite[Proposition B.1]{HK25}. Although (ii) was proved in \cite[Proposition B.1]{HK25} in a restricted regime $d/2<\ell<d+1$, we give an elementary proof for general $s\geq 0$. The inequality only concerns $x$-weight and $x$-derivative; we only focus on the case $f:\mathbb{R}^d\rightarrow\mathbb{R}$. By Taylor's theorem, we have 
\[
f(x+he_j)=f(x)+h(\partial_{x_j} f)(x)+\int_0^1 (1-\theta)(\partial_{x_j x_j} f)(x+h\theta e_j)h^2 \myd{\theta}
\]
for $h>0$. If we choose $h=\action{x}^{-\ell}$, then 
\begin{align*}
\left|\action{x}^\ell \partial_{x_j} f(x)\right|&\leq \left|\action{x}^{2\ell} f(x+\action{x}^{-\ell}e_j)\right|+\left|\action{x}^{2\ell} f(x)\right|+\norm{\nabla_x^2 f}{\Leb{\infty}_x}\\
&\leq \left(\sup_x \frac{\action{x}^{2\ell}}{\action{x+\action{x}^{-s}e_j}^{2\ell}}+1\right)\norm{\action{x}^{2\ell} f}{\Leb{\infty}}+\norm{\nabla_x^2 f}{\Leb{\infty}_x}
\end{align*}
for all $x\in\mathbb{R}^d$ and $j=1,\dots,d$. 
Clearly, ${\action{x}^{2\ell}}/{\action{x+\action{x}^{-\ell}e_j}^{2\ell}}$ is a bounded function on $\mathbb{R}^d$. This implies the desired result.
\end{proof}

\section{Local well-posedness of the Vlasov--Poisson system with the repulsive potential}\label{sec:LWP}

In this section, we prove the local well-posedness of the Vlasov--Poisson system with the repulsive harmonic potential. To show this, we use the lens transform to study \eqref{eq:VP-lens} to allow more general initial data $\mu_0$. Despite the transport term $x\cdot \nabla_v \mu$ in \eqref{eq:VP-rep-potential-reformulate}, we do not require $\action{x}^3 \mu_0 \in \Leb{\infty}_{x,v}$ showing local well-posedness because the lens transform will eliminate this effect which can be seen in \eqref{eq:VP-lens} and $\gamma(0,q,p)=\mu(0,q,p)$.

\begin{theorem}\label{thm:lwp}
Let $\gamma_0 \in C^1_{q,p}$ satisfy 
\[
B:=\norm{\gamma_0}{\Leb{2}_{q,p}}+\norm{\action{p}^3\gamma_0}{\Leb{\infty}_{q,p}}+\norm{\nabla_{q,p}\gamma_0}{\Leb{\infty}_{q,p}}<\infty.
\]
Then there exist $S=S(B)\in(0,1)$ and an absolute constant $C>0$ such that the problem \eqref{eq:VP-lens} has a unique strong solution satisfying 
\[ \sup_{s\in [0,S]} \norm{\gamma(s)}{\Leb{2}_{q,p}}=\norm{\gamma_0}{\Leb{2}_{q,p}},\quad \sup_{s\in [0,S]} \norm{\action{p}^3\gamma(s)}{\Leb{\infty}_{q,p}}+\norm{\nabla_{q,p}\gamma(s)}{\Leb{\infty}_{q,p}}\leq CB.\]
\end{theorem}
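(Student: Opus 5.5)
The plan is to build the solution by Picard iteration (or a regularized scheme) on a short interval $[0,S]\subset[0,1/2]$. On this interval the coefficient $(1-s^2)^{-1}=f(s)$ in \eqref{eq:VP-lens} (case $d=2$) satisfies $f(s)\le 4/3$, so the singular weight plays no role. Concretely, set $\gamma^{(0)}=\gamma_0$ and let $\gamma^{(n+1)}$ solve the linear transport equation
\[
\partial_s\gamma^{(n+1)}+p\cdot\nabla_q\gamma^{(n+1)}+\lambda f(s)E[\gamma^{(n)}]\cdot\nabla_p\gamma^{(n+1)}=0,\qquad \gamma^{(n+1)}(0)=\gamma_0 .
\]
This equation is Hamiltonian, with $\mathcal{K}=|p|^2/2-\lambda f(s)\phi[\gamma^{(n)}]$. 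By Lemma \ref{lem:Liouville} with $g=0$, every $L^r$ norm is conserved along the iteration, and in particular $\|\gamma^{(n)}(s)\|_{L^2}=\|\gamma_0\|_{L^2}$. This conservation passes to the limit and gives the first identity in the statement.

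\textbf{A priori bounds.} Let $M(s)=\|\langle p\rangle^3\gamma(s)\|_{L^\infty}+\|\nabla_{q,p}\gamma(s)\|_{L^\infty}$, with the bootstrap assumption $M\le 2CB$ on $[0,S]$. Lemma \ref{lem:estimate-E} with $A=1$ gives $\|E\|_{L^\infty}\lesssim B^2+M^2$. For the gradient I use \eqref{eq:bound-dE} with a fixed $A$ and $\theta=1/5$, which gives $\|\nabla_qE\|_{L^\infty}\lesssim B^2+M^2$. Note that $\|\gamma\|_{L^\infty}$ and $\||p|^2\gamma\|_{L^\infty}$ are both controlled by $M$.

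\textbf{Moment bound.} Along characteristics $\dot Q=P$, $\dot P=\lambda fE$, the weight changes by $|\tfrac{d}{ds}\langle P\rangle|\le f\|E\|_\infty$. Hence $\langle p\rangle^3\gamma$ grows at most like
\[
\exp\Big(3\int_0^s f\|E\|\Big).
\]

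\textbf{Derivative bound.} Differentiating the equation gives
\[
\partial_s\nabla_q\gamma+\{\nabla_q\gamma,\mathcal K\}=-\lambda f\,\nabla_qE\cdot\nabla_p\gamma,\qquad \partial_s\nabla_p\gamma+\{\nabla_p\gamma,\mathcal K\}=-\nabla_q\gamma .
\]
Applying Lemma \ref{lem:Liouville} with $r=\infty$ yields
\[
\|\nabla\gamma(s)\|_{L^\infty}\le \|\nabla\gamma_0\|_{L^\infty}+\int_0^s\big(1+f\|\nabla_qE\|_{L^\infty}\big)\|\nabla\gamma\|_{L^\infty}\,d\tau .
\]
Combining the moment and derivative bounds, Gronwall gives
\[
M(s)\le \big(B+\|\nabla\gamma_0\|_{L^\infty}\big)\,e^{C's(1+B^2+M^2)} \le CB
\]
as long as $S\lesssim (1+B^2)^{-1}$ with a suitable constant. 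This closes the bootstrap uniformly in $n$.

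\textbf{Convergence and uniqueness.} Next I show the iterates contract in a weaker norm. The natural choice is $\|\langle p\rangle^{2}(\gamma^{(n+1)}-\gamma^{(n)})\|_{L^\infty}$ together with $\|\gamma^{(n+1)}-\gamma^{(n)}\|_{L^2}$. The difference $\delta_{n+1}=\gamma^{(n+1)}-\gamma^{(n)}$ satisfies a transport equation with source $-\lambda f\,(E[\gamma^{(n)}]-E[\gamma^{(n-1)}])\cdot\nabla_p\gamma^{(n)}$. The field difference $E[\gamma^{(n)}]-E[\gamma^{(n-1)}]$ is bounded by Lemma \ref{lem:estimate-E}, applied with $\gamma^2$ replaced by the difference of squares $(\gamma^{(n)}-\gamma^{(n-1)})(\gamma^{(n)}+\gamma^{(n-1)})$. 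The proof of that lemma goes through verbatim for this bilinear input. This gives a bound linear in the difference norm times $CB$, so a small $S$ yields a contraction. The limit is a strong solution; the $C^1$ bounds pass to the limit by weak-$*$ compactness. Uniqueness follows from the same difference estimate.

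\textbf{Main obstacle.} The hardest step will be the $\nabla_qE$ bound, since the Poisson kernel is only borderline Lipschitz in 2D. This is why I route it through \eqref{eq:bound-dE}, which needs $\nabla_q\gamma$, $L^2$, and $|p|^2$-moment control simultaneously. The second delicate point is that the weighted $L^\infty$ moment $\langle p\rangle^3$ must be propagated with the bounded field $E$. Any $q$-weight is avoided because the lens-transformed equation has no $q\cdot\nabla_p$ term.
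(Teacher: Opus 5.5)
Your a priori part follows the paper's proof essentially step by step, and it is fine: Picard iteration for the linear Hamiltonian transport with $|p|^2/2-\lambda f\phi[\gamma^{(n)}]$, $L^2$ conservation via Lemma \ref{lem:Liouville}, the commutator identities for $\nabla_q\gamma$ and $\nabla_p\gamma$, propagation of $\langle p\rangle^3\gamma$, the bound $\|E\|_{L^\infty}+\|\nabla_qE\|_{L^\infty}\lesssim B^2+M^2$ from Lemma \ref{lem:estimate-E}, and Gronwall on a short interval.

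The gap is in the contraction step. You propagate $\|\langle p\rangle^2\delta_{n+1}\|_{L^\infty}$, but the equation for $\langle p\rangle^2\delta_{n+1}$ has source $-\lambda f\langle p\rangle^2\,(E[\gamma^{(n)}]-E[\gamma^{(n-1)}])\cdot\nabla_p\gamma^{(n)}$. You would therefore need $\|\langle p\rangle^2\nabla_p\gamma^{(n)}\|_{L^\infty}$. That quantity is not in $M$, your scheme does not propagate it, and the hypotheses do not imply it. For example, $\gamma_0=\chi(q)\langle p\rangle^{-3}\sin(\langle p\rangle^4)$ with $\chi\in C^\infty_c$ has $B<\infty$, yet $\nabla_p\gamma_0$ does not decay as $|p|\to\infty$. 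So the weighted contraction cannot be closed.

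The weight is forced on $\delta$ because you transcribed Lemma \ref{lem:estimate-E} verbatim to the bilinear input, which puts $|p|^2$ on each factor. The fix, which is what the paper does, is to measure $\delta$ without weight and put all the velocity decay on the iterates. Split $|q-y|<1$ and $|q-y|\ge 1$ and use $\int_{\mathbb{R}^2}\langle p\rangle^{-3}\,dp<\infty$ to get
\[
\bigl|E[\gamma^{(n)}]-E[\gamma^{(n-1)}]\bigr|\lesssim \|\delta_n\|_{L^\infty}\,\|\langle p\rangle^3(\gamma^{(n)}+\gamma^{(n-1)})\|_{L^\infty}+\|\delta_n\|_{L^2}\,\|\gamma^{(n)}+\gamma^{(n-1)}\|_{L^2}.
\]
This is where the third moment in $B$ is used; the second moment suffices for $E$ and $\nabla_qE$. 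The $L^\infty$ source then only needs $\|\nabla_p\gamma^{(n)}\|_{L^\infty}\le CB$, which you have.

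A second point, which your proposal shares with the paper's own argument: the $L^2$ half of the contraction also does not close under the stated hypotheses. Propagating $\|\delta_{n+1}\|_{L^2}$ with Lemma \ref{lem:Liouville} needs the source in $L^2_{q,p}$, i.e.\ a bound on $\|\nabla_p\gamma^{(n)}\|_{L^2_{q,p}}$. But $B$ controls $\nabla\gamma_0$ only in $L^\infty$: for example, $\gamma_0=\langle q\rangle^{-2}\langle p\rangle^{-3}\sin(\langle q\rangle^2\langle p\rangle)$ has $B<\infty$ but $\nabla_p\gamma_0\notin L^2$. The paper's differential inequality for $\|\delta_n\|_{L^2}^2$ uses this bound tacitly, and some integrability of the difference is genuinely needed for the far field $|q-y|\ge 1$.

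To complete the argument you have two options. One is to add $\|\nabla_{q,p}\gamma_0\|_{L^2}$ to $B$; it propagates through the same commutator identities with $r=2$. The other is to compare characteristics instead of densities. Write $E[\gamma^{(n+1)}]-E[\gamma^{(n)}]$ as the integral of $K(q-X_n)-K(q-X_{n-1})$ against $\gamma_0^2$, with $K(x)=x/(2\pi|x|^2)$. The kernel difference decays like $|q-y|^{-2}$, so the far field is controlled by $\|\gamma_0\|_{L^2}^2$ and the near field by $\|\rho\|_{L^\infty}\lesssim\|\langle p\rangle^3\gamma\|_{L^\infty}^2$. This gives a log-Lipschitz estimate in $\sup|X_n-X_{n-1}|$, which closes by an Osgood-type argument.
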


\begin{proof}
The proof follows from standard Picard iteration.
Define the following iteration scheme:
\[ \phi_0(s,q)=0,\quad \gamma_0(s,q,p)=\gamma_0(q,p),\]
\begin{equation}\label{eq:picard-stage}
\partial_t \gamma_{n+1}+\{\gamma_{n+1},\mathcal{H}_n\}=0,\quad \gamma_{n+1}(s=0)=\gamma_0,\quad \mathcal{H}_n:=\frac{|p|^2}{2}-\lambda f(s)\phi_n(s,q),
\end{equation}
where 
\[ \phi_n(s,q)=\frac{1}{2\pi} \iint_{\mathbb{R}^2_y\times\mathbb{R}^2_{v}} \ln|q-y|\cdot{\gamma_n^2(s,y,v)}\myd{y}\myd{v}.\]
Then by the method of characteristic, the problem admits a unique solution $\gamma_{n+1}\in C^1([0,1);\Sob{1}{\infty}_{q,p})$.
 Moreover,
it follows similarly from Lemma \ref{lem:estimate-E} that
\begin{equation}\label{eq:electric-field-estimate-picard}
\begin{aligned}
\norm{D\phi_n}{\Leb{\infty}_x}&\apprle \norm{\gamma_n}{\Leb{2}_{q,p}}^2+\norm{\action{p}^2\gamma_n}{\Leb{\infty}_{q,p}}^2,\\
\norm{D^2\phi_n}{L^\infty_{q,p}}&\apprle \norm{\action{p}^2\gamma_n}{L^\infty_{q,p}}^2+\norm{\gamma_n}{L^2_{q,p}}^2+\norm{\nabla_q\gamma_n}{L^\infty_{q,p}}^2.
\end{aligned}
\end{equation}

Also, by differentiating \eqref{eq:picard-stage} in $q$ and $p$ (more precisely, taking a finite difference), we get 
\begin{align*}
\partial_s (D_{q^i} \gamma_{n+1})+\{D_{q^i}\gamma_{n+1},\mathcal{H}_n\}&=-\lambda f(s) \nabla_q(D_{q^i}\phi_n)\cdot\nabla_p\gamma_{n+1},\\
\partial_s (D_{p^i} \gamma_{n+1})+\{D_{p^i}\gamma_{n+1},\mathcal{H}_n\}&=-D_{q^i}\gamma_{n+1}.
\end{align*}
and for any weight $\omega$,
\begin{equation*}
	\begin{split}
	\pr_s(\omega\gamma_{n+1})+\{\omega\gamma_{n+1},\mathcal{H}_n\}=\gamma_{n+1}\{\omega,\mathcal{H}_n\}.	
	\end{split}
\end{equation*}

This implies that 
\begin{equation*}
	\begin{split}
		\norm{\gamma_{n+1}}{L^2_{q,p}}=\norm{\gamma_0}{L^2_{q,p}}
	\end{split}
\end{equation*}
and 
\begin{equation}\label{eq:derivative-estimates}
\begin{aligned}
\norm{D_{q^i}\gamma_{n+1}(s)}{\Leb{r}_{q,p}}&\leq \norm{D_{q^i}\gamma_0}{\Leb{r}_{q,p}}+\int_0^s f(\tau)\norm{D^2\phi_n(\tau)}{\Leb{r}_q}\norm{\nabla_p \gamma_{n+1}(\tau)}{\Leb{r}_{q,p}}\myd{\tau},\\
\norm{D_{p^i}\gamma_{n+1}(s)}{\Leb{r}_{q,p}}&\leq \norm{D_{p^i}\gamma_0}{\Leb{r}_{q,p}}+\int_0^s \norm{\nabla_p \gamma_{n+1}(\tau)}{\Leb{r}_{q,p}}\myd{\tau}.\\
\norm{\langle p\rangle^3\gamma_{n+1}(s)}{L^r_{q,p}}&\leq \norm{\langle p\rangle^3\gamma_0}{L^\infty_{q,p}}+\int_0^s \norm{\langle p\rangle^3\gamma_{n+1}(\tau)}{L^r_{q,p}}\norm{D\phi_n(\tau)}{L^\infty_q}f(\tau)\myd{\tau}.
\end{aligned}
\end{equation}
for $r\in\{2,\infty\}$.

Define
\begin{equation*}
A_n(s):=\norm{\action{p}^3\gamma_n(s)}{\Leb{\infty}_{q,p}}+\norm{\nabla_{q,p}\gamma_n(s)}{ \Leb{\infty}_{q,p}}.
\end{equation*}
Then by \eqref{eq:derivative-estimates}, we have
\[
	A_{n+1}(s)\leq A_0+\int_0^s (1+\norm{D\phi_n(\tau)}{L^\infty_q}+\norm{D^2\phi_n(\tau)}{L^\infty_q})A_{n+1}(\tau) f(\tau)\myd{\tau}.
\]
By Gronwall's inequality and \eqref{eq:electric-field-estimate-picard}, we have
\begin{equation*}
	\begin{split}
		A_{n+1}(s)&\apprle A_0\exp\left(\int_0^s (1+\norm{D\phi_n(\tau)}{L^\infty_q}+\norm{D^2\phi_n(\tau)}{L^\infty_q})f(\tau)\myd{\tau} \right)\\
		&\apprle A_0\exp\of{\int_0^s A_{n}^2(\tau)f(\tau)\myd{\tau}}.
	\end{split}
\end{equation*}
If we define
\[ B_n(s)=A_n(s)+\norm{\gamma_n(s)}{\Leb{2}_{q,p}},\]
then by induction, there exist $C>1$ and $0<S(B)<1$,  such that 
\[ B_n(s)\leq CB,\quad \arctanh S(B)\sim B^{-2} \]
for all $s\in [0,S]$ and $n$.

If we write $\delta_n:=\gamma_{n+1}-\gamma_n$, then a direct computation gives
\[ 
0=\partial_s \delta_n +\left\{\delta_n,\frac{\mathcal{H}_n+\mathcal{H}_{n-1}}{2}\right\}-\lambda f(s) \nabla_q  (\phi_n-\phi_{n-1})\cdot \nabla_p \left(\frac{\gamma_{n+1}+\gamma_n}{2}\right).
\]
Note that 
\begin{align*}
\norm{\nabla_q [\phi_n-\phi_{n-1}]}{\Leb{\infty}_q}&\apprle \norm{\delta_{n-1}}{\Leb{\infty}_{q,p}}\left(\norm{\action{p}^3\gamma_n}{\Leb{\infty}_{q,p}}+\norm{\action{p}^3\gamma_{n-1}}{\Leb{\infty}_{q,p}} \right)\\
&\relphantom{=}+\norm{\delta_{n-1}}{\Leb{2}_{q,p}}(\norm{\gamma_n}{\Leb{2}_{q,p}}+\norm{\gamma_{n-1}}{\Leb{2}_{q,p}}).
\end{align*}

By Liouville's theorem induced by the new Hamiltonian $(\mathcal{H}_n+\mathcal{H}_{n-1})/2$, it follows that 
\begin{align*}
\frac{1}{2}\frac{d}{dt}\norm{\delta_n(s)}{\Leb{2}_{q,p}}^2&\apprle B^2\norm{\delta_{n-1}(s)}{\Leb{2}_{q,p}\cap\Leb{\infty}_{q,p}}\norm{\delta_n(s)}{\Leb{2}_{q,p}}f(s),
\end{align*}
and
\begin{align*}
\norm{\delta_n(s)}{\Leb{\infty}_{q,p}}&\apprle B^2 \int_0^s \norm{\delta_{n-1}(\tau)}{\Leb{2}_{q,p}\cap\Leb{\infty}_{q,p}}f(\tau)\,\mathrm{d}\tau.
\end{align*}
Hence by Gr\"{o}nwall's inequality, we get 
\[ \norm{\delta_n(s)}{\Leb{2}_{q,p}\cap\Leb{\infty}_{q,p}}\leq CB^2\int_0^s \norm{\delta_{n-1}(\tau)}{\Leb{2}_{q,p}\cap\Leb{\infty}_{q,p}} f(\tau) \,\mathrm{d}\tau \]
for some constant $C>0$. Then by iteration, we get 
\[ \norm{\delta_n(s)}{\Leb{2}_{q,p}\cap\Leb{\infty}_{q,p}}\leq (CB^2 \arctanh S)^n \]
for all $s\in [0,S]$ and for all $n$. Hence it follows that 
\[
 \norm{\gamma_m(s)-\gamma_n(s)}{\Leb{2}_{q,p}\cap\Leb{\infty}_{q,p}}\leq \sum_{k=n+1}^{m-1} (CB^2 \arctanh S)^k 
\]
for all $s\in [0,S]$. By choosing $S>0$ sufficiently small so that $CB^2 \arctanh S\leq 1/2$, we see that the sequence $\{\gamma_n\}$ is Cauchy in $\Leb{\infty}_s\Leb{2}_{q,p}\cap \Leb{\infty}_{s,q,p}$. Then by a standard compactness argument and duality argument, we can show the existence of a strong solution satisfying $\gamma \in C([0,S];\Leb{2}_{q,p})$ and $E[\gamma]=\lim_{n\rightarrow\infty} \nabla_x\phi_n$. The uniqueness part is also easy to show. Finally, the regularity result follows from a method of characteristics induced by the electric field generated by $\gamma$ and the uniqueness of the strong solution. This completes the proof of Theorem \ref{thm:lwp}.
\end{proof}

\begin{remark}
A similar proof also works for the Vlasov--Poisson with attractive harmonic potentials by taking trigonometric lens transformation \eqref{eq:VP-lens2}.
\end{remark}

\section{Scattering of solutions}\label{sec:scattering}
In this section, we prove Theorem \ref{thm:A}. The key point is to show that the electric field $E[\gamma](s)$ converges to $E_1$ uniformly as $s\rightarrow 1-$, which in particular implies $E[\gamma]$ is uniformly bounded. To show this, we need the following lemma, which tells us that the electric field is (almost) Lipshitz continuous assuming the bounds on the moments.
\begin{proposition}\label{prop:LipboundE}
Let $I\subset [0,1)$. If $\gamma$ satisfies 
\[ \gamma \in \Leb{\infty}(I;\Leb{\infty}_{q,p})\cap C(I;\Leb{2}_{q,p}),\quad |p|^2 \gamma \in \Leb{\infty}(I;\Leb{\infty}_{q,p}),\]
and 
\begin{equation}\label{eq:gamma-divergence-form}
	\partial_s \{\gamma^2\}+\Div_q(p\gamma^2)+\Div_p(G \gamma^2)=0\quad \text{in } I\times \mathbb{R}^2_q\times\mathbb{R}^2_p
\end{equation}
for some force field $G(s,q)$, then for $s_0,s_1 \in I$ satisfying $0\leq s_0\leq s_1<1$, we have
 \begin{align*}
 |E(s_1)-E(s_0)|&\apprle (s_1-s_0) |\ln (s_1-s_0)| \norm{\action{p}^2\gamma}{\Leb{\infty}_{s,q,p}}^2\\
 &\relphantom{=}+(s_1-s_0)^2[\norm{\gamma}{\Leb{\infty}_s\Leb{2}_{q,p}}^2+\norm{\action{p}^2\gamma}{\Leb{\infty}_{s,q,p}}^2]\\
 &\relphantom{=}+(s_1-s_0)^3\left(F(s_1)-F(s_0) \right)\norm{G/f(s)}{\Leb{\infty}_{s,q}} \norm{\action{p}^2\gamma}{\Leb{\infty}_{s,q,p}}^2.
 \end{align*}
\end{proposition}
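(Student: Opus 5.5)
We use the dyadic decompositions \eqref{eq:electric-field-rep} and \eqref{eq:electric-field-rep-2} and split the radial integral at the scale $R\sim \delta:=s_1-s_0$. Small radii are handled by the static bounds of Lemma \ref{lem:estimate-E_RV}. Large radii are handled by differentiating $E_{R,V}$ (or $E_R$) in time through the continuity equation \eqref{eq:gamma-divergence-form}.

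\emph{Small scales, $R\le\delta$.} We bound each of $E_{R,V}(s_1)$ and $E_{R,V}(s_0)$ separately using
\[
|E_{R,V}|\apprle R^2\min\{V^2\norm{\gamma}{L^\infty}^2,V^{-2}\norm{|p|^2\gamma}{L^\infty}^2\}.
\]
Integrating in $V$ gives a contribution $\apprle R^2\norm{\action{p}^2\gamma}{L^\infty}^2$. Integrating in $\myd R/R^2$ over $R\le\delta$ then gives $\delta\norm{\action{p}^2\gamma}{L^\infty_{s,q,p}}^2$, which is absorbed into the first term.

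\emph{Large scales, $R>\delta$.} Write $E_{R,V}(s_1)-E_{R,V}(s_0)=\int_{s_0}^{s_1}\partial_s E_{R,V}\,\myd s$. Using \eqref{eq:gamma-divergence-form} and integrating by parts in $y$ and $v$,
\[
\partial_s E_{R,V}=R^{-1}\iint (\nabla^2\chi)(R^{-1}(q-y))\,v\,\chi(V^{-1}v)\gamma^2\myd y\myd v+V^{-1}\iint(\nabla\chi)(R^{-1}(q-y))\,G(s,y)\cdot(\nabla\chi)(V^{-1}v)\gamma^2\myd y\myd v .
\]
\begin{itemize}
\item \emph{Transport term.} It is bounded by $R\min\{V^3\norm{\gamma}{L^\infty}^2,V^{-1}\norm{|p|^2\gamma}{L^\infty}^2\}$, hence by $R\norm{\action{p}^2\gamma}{L^\infty}^2$ after the $V$-integral. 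Integrating against $\myd R/R^2$ over $\delta<R<1$ produces $\delta|\ln\delta|\norm{\action{p}^2\gamma}{L^\infty}^2$; this logarithm is the source of the first term.
\item \emph{Force term.} It is bounded by $|G|\,R^2\min\{V\norm{\gamma}{L^\infty}^2,V^{-3}\norm{|p|^2\gamma}{L^\infty}^2\}$. Write $|G|\le f(s)\norm{G/f}{L^\infty}$ and use $\int_{s_0}^{s_1}f=F(s_1)-F(s_0)$.
\item \emph{Very large scales, $R\ge 1$.} Here the $L^\infty$ moment gain is not integrable, so we use the $E_R$ form together with the $L^2$ norm, where $|\partial_sE_R|\apprle R^{-1}\cdot$(moments). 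Combined with the $L^2$ mass, this gives the $\norm{\gamma}{L^2}^2$-type term.
\end{itemize}

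\emph{Choice of cutoffs.} The powers of $\delta$ attached to each piece depend on where the radial cutoffs are placed. I would first try cutoffs at $R=\delta$ and $R=\delta^{-1}$ (and possibly $\delta^{-2}$ for the force term), rather than at $R=1$. For $R$ beyond the top cutoff, the trivial bound $|E_R|\apprle\norm{\gamma}{L^2}^2$ integrated against $\myd R/R^2$ yields $\delta$ or $\delta^2$ times $\norm{\gamma}{L^2}^2$. The force piece, integrated over $R$ up to the top cutoff, is then multiplied by $\delta$-powers, giving a term of the form $\delta^{a}(F(s_1)-F(s_0))\norm{G/f}{}\norm{\action{p}^2\gamma}{}^2$.

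\emph{Main obstacle.} The hardest step is the bookkeeping that produces exactly the weights $\delta^2$ and $\delta^3$ stated in the proposition. This requires tuning the cutoffs, including possibly a $V$-dependent split $V\sim R^{\pm1}$. Making the force term integrable in $R$ at large scales is likely to force an extra time-integration trick. Failing that, I expect one needs to use the $L^2$ continuity, $\gamma\in C(I;L^2)$, for the very largest scales.
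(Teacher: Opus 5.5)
Your static small-scale bound and your treatment of the transport term are fine. The force term, however, is a genuine gap; it cannot be fixed by tuning cutoffs. You bound each velocity shell in absolute value by $|G|R^2\min\{V\|\gamma\|_{L^\infty}^2,V^{-3}\||p|^2\gamma\|_{L^\infty}^2\}$. After $\int \mathrm{d}V/V$, the time integral, and $\int \mathrm{d}R/R^2$ over $\delta<R<R_{\max}$ (with $\delta=s_1-s_0$), this gives a contribution of size $R_{\max}\,(F(s_1)-F(s_0))\|G/f\|_{L^\infty}\|\langle p\rangle^2\gamma\|_{L^\infty}^2$. The tail $R>R_{\max}$ costs $R_{\max}^{-1}\|\gamma\|_{L^2}^2$, so getting $\delta^2\|\gamma\|_{L^2}^2$ forces $R_{\max}\gtrsim\delta^{-2}$. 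You then end up with $\delta^{-2}(F(s_1)-F(s_0))\cdots$ instead of $\delta^{3}(F(s_1)-F(s_0))\cdots$. With your proposed $R_{\max}=\delta^{-1}$ or $\delta^{-2}$, the bound does not even tend to zero as $\delta\to0$. As long as each shell is estimated in absolute value, no placement of cutoffs and no $V\sim R^{\pm1}$ split helps, because the per-shell terms are not small in general. The smallness has to come from cancellation between shells.

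The missing idea is that the force contributions sum to zero over $V$. Indeed, $\int_0^\infty V^{-1}(\nabla\chi)(V^{-1}v)\,\mathrm{d}V/V=\nabla_v\int_0^\infty\chi(V^{-1}v)\,\mathrm{d}V/V=0$ for $v\neq0$, i.e.\ $\int\Div_p(G\gamma^2)\,\mathrm{d}p=0$. The paper exploits this by testing the continuity equation against $(\nabla\chi)(R^{-1}(q-r))\,\chi_{\{B\le\cdot\le B^{-1}\}}(u)$, where $\chi_{\{B\le\cdot\le B^{-1}\}}(u)=\int_B^{B^{-1}}\chi(V^{-1}u)\,\mathrm{d}V/V$; this defines the truncated field $\mathcal{E}_{R,B}$. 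This cutoff is constant for $2B\le|u|\le B^{-1}/2$, so its gradient lives only where $|u|\sim B$ or $|u|\sim B^{-1}$. Consequently the force term is $\lesssim f(s)\|G/f\|_{L^\infty}R^2B\|\langle p\rangle^2\gamma\|_{L^\infty}^2$, which contributes $A^{-1}B\,(F(s_1)-F(s_0))\cdots$ over $A\le R\le A^{-1}$. The discarded shells $V<B$ and $V>B^{-1}$ cost only $A^{-1}B^2$, estimated statically. Choosing $A=\delta^2$ and $B=\delta^5$ yields exactly the three stated terms, since $A^{-1}B=\delta^3$.

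Alternatively, you can differentiate $E_R$ itself, with no velocity cutoff. To justify this, insert $\eta(v/M)$ and let $M\to\infty$, using $\gamma^2\lesssim\langle v\rangle^{-4}$ and $G$ bounded on $[s_0,s_1]$. The force term then vanishes, and the third term of the statement is not even needed.

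Finally, drop your ``very large scales'' step that uses $\partial_sE_R$ with the $L^2$ norm. It would require $\||p|^{1/2}\gamma\|_{L^2}$, which the hypotheses do not control, and it only produces a factor $\delta$. What works is the static bound $|E_R|\lesssim\|\gamma\|_{L^2}^2$ for $R\ge\delta^{-2}$, which you also mention.
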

\begin{proof}
Recall the representation of electric field $E$ by $E_R$ and $E_{R,V}$ given in \eqref{eq:electric-field-rep} and \eqref{eq:electric-field-rep-2}. Then by Lemma \ref{lem:estimate-E_RV}, we have
\[
 	\int_{R=A^{-1}}^\infty |E_R(s)|\frac{\myd{R}}{R^2}\apprle A\norm{\gamma}{L^2_{q,p}}^2, \quad \int_0^A |E_R(s)|\frac{\myd{R}}{R^2}\apprle A\norm{\action{p}^2\gamma}{L^\infty_{q,p}}^2.
\]
 and
\[
 		\int_{R=0} ^{A^{-1}}\of{\int_{V=0}^B+\int_{B^{-1}}^\infty} |E_{R,V}|\frac{\myd{R}}{R^2}\frac{\myd{V}}{V}\apprle A^{-1}B^2(\norm{\gamma}{L^\infty_{q,p}}^2+\norm{|{p}|^2\gamma}{L^\infty_{q,p}}^2).
\]
From these, we get 
 \begin{equation}\label{eq:remainder-estimates}
 	\begin{split}
 		\Bigl| E(s)-\int_{A}^{A^{-1}} \mathcal{E}_{R,B}(s)\frac{\myd{R}}{R^2}\Bigr|\apprle A^{-1}B^2\norm{\action{p}^2\gamma}{L^{\infty}_{q,p}}^2+A[\norm{\gamma}{L^2_{q,p}}^2+ \norm{\action{p}^2\gamma}{L^{\infty}_{q,p}}^2], \\
 		\mathcal{E}_{R,B}(s):= \iint_{\mathbb{R}^2_y\times\mathbb{R}^2_u} (\nabla\chi)(R^{-1}(x-y))\cdot \chi_{\{B\leq\cdot\leq B^{-1}\}}(u)\cdot \gamma^2(s,y,u)\,\mathrm{d}y\,\mathrm{d}u
 	\end{split}
 \end{equation}
 where
 \begin{equation*}
 	\begin{split}
 		\chi_{\{B\leq\cdot\leq B^{-1}\}}(u):=\int_{\{B\leq V\leq B^{-1}\}} \chi(V^{-1}u)\frac{\myd{V}}{V}.
 	\end{split}
 \end{equation*}
 Using the equation \eqref{eq:gamma-divergence-form}, for any $s_0,s_1\in I$ with $s_0<s_1$, we  get
 \begin{equation}\label{eq:difference-estimate}
\begin{aligned}
0&=  \int_{s=s_0}^{s_1} \iint_{\mathbb{R}^2_r\times\mathbb{R}^2_u} (\nabla\chi)\left(R^{-1}(q-r)\right) \cdot \chi_{\left\{B \leq \cdot \leq B^{-1}\right\}}(u) \cdot\left\{\partial_s \gamma^2+\operatorname{div}_r\left(\gamma^2 u\right)+\operatorname{div}_u\left(G \gamma^2\right)\right\} \,\mathrm{d} r \,\mathrm{d} u\,\mathrm{d} s, \\
&=  \mathcal{E}_{R, B}\left(s_1\right)-\mathcal{E}_{R, B}\left(s_0\right)\\
&\relphantom{=}+\int_{s=s_0}^{s_1} \iint_{\mathbb{R}^2_r\times\mathbb{R}^2_u} R^{-1} u^k\left\{\nabla \partial_{q^k} \chi\right\}\left(R^{-1}(q-r)\right) \cdot \chi_{\left\{B \leq \cdot \leq B^{-1}\right\}}(u) \cdot \gamma^2(s, r, u) \,\mathrm{d} r \,\mathrm{d} u\,\mathrm{d} s, \\
&\relphantom{=} -\int_{s=s_0}^{s_1} \iint_{\mathbb{R}^2_r\times\mathbb{R}^2_u}  \partial_{q^j} \chi\left(R^{-1}(q-r)\right) \cdot \gamma^2(s, r, u) \cdot\left(G \cdot \nabla_u\right) \chi_{\left\{B \leq \cdot \leq B^{-1}\right\}}(u) \,\mathrm{d} r \,\mathrm{d} u \,\mathrm{d} s .
\end{aligned}
\end{equation}

Since
$$
\left|\nabla_u \chi_{\left\{B \leq \cdot \leq B^{-1}\right\}}(u)\right| \apprle B^{-1} 1_{\{|u| \leq 2 B\}}+B^3 1_{\left\{|u| \geq B^{-1} / 2\right\}},
$$
we see that
$$
\begin{aligned}
& \left|\iint_{\mathbb{R}^2_r\times\mathbb{R}^2_u}  \partial_{q^j} \chi\left(R^{-1}(q-r)\right) \cdot \gamma^2(r, u) \cdot\left(G \cdot \nabla_u\right) \chi_{\left\{B \leq \cdot \leq B^{-1}\right\}}(u) \,\mathrm{d}r \,\mathrm{d}u\right| \\
& \apprle\|G/f(s)\|_{L_{q, p}^{\infty}} \cdot f(s)\cdot R^2 \cdot\left[B\|\gamma\|_{L_{q, p}^{\infty}}^2+B^6\left\||p|^2 \gamma\right\|_{L_{q, p}^{\infty}}^2\right]
\end{aligned}
$$
Then by \eqref{eq:difference-estimate}, we get 
\begin{equation}\label{eq:almost-lipschitz}
 \begin{aligned}
 \left|\int_{A}^{A^{-1}} \{\mathcal{E}_{R,B}(s_1)-\mathcal{E}_{R,B}(s_0)\}\frac{\myd{R}}{R^2} \right|&\apprle (s_1-s_0) \norm{\action{p}^2\gamma}{\Leb{\infty}_{s,q,p}}^2\int_{R=A}^{A^{-1}} \frac{\myd{R}}{R} \\
 &\relphantom{=}+\left(\int_{s_0}^{s_1} f(s)ds \right)\norm{G/f}{\Leb{\infty}_{s,q}}A^{-1}B \norm{\action{p}^2\gamma}{\Leb{\infty}_{s,q,p}}^2.
 \end{aligned}
 \end{equation}
Hence it follows from \eqref{eq:almost-lipschitz} and \eqref{eq:remainder-estimates} that
 \begin{align*}
 |E(s_1)-E(s_0)|&\apprle A[\norm{\gamma}{\Leb{2}_{q,p}}^2+\norm{\action{p}^2\gamma(s)}{\Leb{\infty}_{q,p}}^2]\\
 &\relphantom{=}+A^{-1}B^2\norm{\action{p}^2\gamma(s)}{\Leb{\infty}_{q,p}}^2]\\
 &\relphantom{=}+(s_1-s_0)\norm{\action{p}^2\gamma}{\Leb{\infty}_{s,q,p}}^2 (-\ln A)\\
 &\relphantom{=}+\left(F(s_1)-F(s_0) \right)\norm{G/f}{\Leb{\infty}_{s,x}}A^{-1}B \norm{\action{p}^2\gamma}{\Leb{\infty}_{s,q,p}}^2.
 \end{align*}
 If we choose $A=(s_1-s_0)^2, B=(s_1-s_0)^5$, then 
 \begin{align*}
 |E(s_1)-E(s_0)|&\apprle (s_1-s_0)^2[\norm{\gamma}{\Leb{\infty}_s\Leb{2}_{q,p}}^2+\norm{\action{p}^2\gamma}{\Leb{\infty}_{s,q,p}}^2]\\
 &\relphantom{=}+(s_1-s_0) |\ln (s_1-s_0)|\norm{\action{p}^2\gamma}{\Leb{\infty}_{s,q,p}}^2\\
 &\relphantom{=}+(s_1-s_0)^3\left(F(s_1)-F(s_0)\right)\norm{G/f(s)}{\Leb{\infty}_{s,q}} \norm{\action{p}^2\gamma}{\Leb{\infty}_{s,q,p}}^2.
 \end{align*}
 This proves Proposition \ref{prop:LipboundE}.
\end{proof}

The next lemma shows that we can propagate the moments assuming the electric field is bounded.
\begin{lemma}\label{lemma:propagatmoment} Fix an integer $a>0$. Suppose that $\gamma$ is a solution to \eqref{eq:Hamiltonian-gamma} on $[0,T^*]$ with initial data $\gamma(s=0)=\gamma_0$. Suppose that $\gamma_0$ satisfies 
\[
 \norm{\action{p}^a \gamma_0}{\Leb{r}_{q,p}}\leq \varepsilon_0
\]
and that 
\[
|E(s,q)|\leq D,\quad 0\leq s\leq T^*.
\]
Then 
\begin{align*}
\norm{\gamma(s)}{\Leb{r}_{q,p}}&\leq \varepsilon_0,\\
\norm{\action{p}^a\gamma(s)}{\Leb{r}_{q,p}}&\apprle \varepsilon_0+\varepsilon_0 \action{F(s)}^a D^a.
\end{align*}
for all $s\in [0,T^*]$.
\end{lemma}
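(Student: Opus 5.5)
The plan is to run the weighted transport argument behind \eqref{eq:derivative-estimates}, but with the weight chosen so that only the field, and not the weight itself, drives the growth. Writing \eqref{eq:Hamiltonian-gamma} in the form \eqref{eq:VP-lens} with $d=2$, the force in the $p$-direction is $\lambda f(s)E(s,q)$ with $f=F'$. The first bound, $\norm{\gamma(s)}{\Leb{r}_{q,p}}\le\norm{\gamma_0}{\Leb{r}_{q,p}}\le\varepsilon_0$, is immediate from Lemma \ref{lem:Liouville} with $g=0$, since $\action{p}^a\ge 1$.

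For the moment bound, argue along characteristics. Let $(Q(s),P(s))$ solve
\[
\dot Q=P,\qquad \dot P=\lambda f(s)E(s,Q),
\]
with $(Q(0),P(0))=(q_0,p_0)$. Since $\gamma$ is transported, $\gamma(s,Q(s),P(s))=\gamma_0(q_0,p_0)$. From $|E|\le D$ we get
\[
|P(s)-p_0|\le D\int_0^s f(\tau)\,\mathrm{d}\tau=D\,F(s),
\]
and hence $\action{P(s)}\le \action{p_0}+DF(s)\apprle \action{p_0}\action{F(s)}D'$ with $D'=\max(D,1)$. Raising to the power $a$ gives
\[
\action{P(s)}^a|\gamma(s,Q,P)|\apprle_a\bigl(\action{p_0}^a+D^aF(s)^a\bigr)|\gamma_0(q_0,p_0)|.
\]

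For $r=\infty$ this yields
\[
\norm{\action{p}^a\gamma(s)}{\Leb{\infty}_{q,p}}\apprle \norm{\action{p}^a\gamma_0}{\Leb{\infty}_{q,p}}+D^aF(s)^a\norm{\gamma_0}{\Leb{\infty}_{q,p}}\le \varepsilon_0+\varepsilon_0\action{F(s)}^aD^a.
\]
For $r<\infty$ the same argument works after raising the pointwise bound to the power $r$ and integrating. The flow map $(q_0,p_0)\mapsto(Q(s),P(s))$ is Hamiltonian, hence measure-preserving by Liouville, so the change of variables costs nothing and gives the same estimate in $\Leb{r}$.

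The one step that needs care is justifying the characteristic flow when $E$ is only bounded. For this I would use the regularity of the strong solution from Theorem \ref{thm:lwp}, which gives $\nabla_q E$ bounded via \eqref{eq:bound-dE}, so the flow is well defined. Alternatively, one can avoid characteristics altogether and differentiate the weight: use
\[
\{\action{p}^a,\mathcal{H}\}\le a f(s)D\action{p}^{a-1},
\]
derive a Gronwall-type inequality for $m_a(s)=\norm{\action{p}^a\gamma}{\Leb{r}_{q,p}}$ of the form $m_a(s)\le m_a(0)+aD\int_0^s f\,m_{a-1}\,\mathrm{d}\tau$, and induct on $a$ starting from $m_0\le\varepsilon_0$. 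That induction produces $m_a\apprle\varepsilon_0\sum_{k\le a}(DF(s))^k$, which is again bounded by $\varepsilon_0(1+\action{F(s)}^aD^a)$.
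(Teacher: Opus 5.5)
Your proposal is correct. Your fallback (commuting $\action{p}^a$ with the transport operator to get the source term $\lambda a f(s)\action{p}^{a-2}p\gamma\cdot E$, applying Lemma \ref{lem:Liouville}, and inducting on $a$) is exactly the paper's proof, and your main characteristic argument is the same mechanism made pointwise: it gives $\norm{\action{p}^a\gamma(s)}{\Leb{r}_{q,p}}\le(1+DF(s))^a\norm{\action{p}^a\gamma_0}{\Leb{r}_{q,p}}$ in one step, with no induction. The flow-regularity caveat you raise is equally implicit in the paper, since Lemma \ref{lem:Liouville} is itself proved by characteristics, so the fallback does not actually avoid them.
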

\begin{proof}
A direct computation gives
\begin{equation*}
	\begin{split}
	\pr_s(\action{p}^a\gamma)+\{\action{p}^a \gamma,\H \}=\lambda a f(s)  \action{p}^{a-2}p\gamma\cdot E(s,q).	
	\end{split}
\end{equation*}
Then the proof follows from Lemma \ref{lem:Liouville} and induction.
\end{proof}

Finally, we note the following commutative relation of the derivatives: if $\gamma$ is a solution to \eqref{eq:Hamiltonian-gamma}, then for $1\leq i,j\leq 2$, we have
\begin{equation}\label{eq:derivative-relation-gamma}  
\begin{aligned}
\partial_s(\pr_{q^i}\gamma)+\{\pr_{q^i}\gamma,\mathcal{H}\}&=-\lambda f(s)\pr_{q^i} E \cdot \nabla_p \gamma, \\
\partial_s(\pr_{p^i}\gamma)+\{\pr_{p^i}\gamma,\mathcal{H}\}&=-\pr_{q^i}\gamma,\\
\partial_s(p^i\pr_{p^j}\gamma)+\{p^i\pr_{p^j}\gamma,\mathcal{H}\}&=-p^i\pr_{q^j}\gamma+\lambda f(s)E^i\cdot \pr_{p^j}\gamma,    \\
\partial_s(p^i\pr_{q^j}\gamma)+\{p^i\pr_{q^j}\gamma,\mathcal{H}\}&=-\lambda f(s)p^i\pr_{q^j}E\cdot \nabla_p\gamma+\lambda f(s)E^i\pr_{q^j}\gamma.
\end{aligned}
\end{equation}

The following bootstrap proposition will be used to show that the local-in-time solution becomes global under a smallness assumption on the initial data.

\begin{proposition}\label{prop:bootstrap}
There exists a small $\varepsilon_1>0$ such that if $\gamma$ is a solution of \eqref{eq:Hamiltonian-gamma} on $[0,T^*]$ with initial data $\gamma(s=0)=\gamma_0$, then we have the following bootstrap results: 
\begin{enumerate}[\rm (i)\,]
\item (Moments and the electric field) if there holds that 
\begin{equation}\label{eq:initial-gamma}
	\norm{\action{p}\gamma_0}{\Leb{2}_{q,p}}+\norm{\action{p}^m \gamma_0}{\Leb{\infty}_{q,p}}\leq \varepsilon_1,\quad m\geq 2,
\end{equation}
then the electric field $E(s)$ remains bounded and the solution satisfies the bounds 
\begin{equation}\label{eq:p-gamma-L2}
\norm{\action{p}\gamma(s)}{\Leb{2}_{q,p}}\apprle \varepsilon_1 \action{F(s)}
\end{equation}
and 
\begin{equation}\label{eq:p-gamma-L-infinity}
\norm{\action{p}^a\gamma(s)}{\Leb{\infty}_{q,p}}\apprle \varepsilon_1 \action{F(s)}^a,\quad 0\leq a\leq m.
\end{equation}
Moreover, there exist $C>0$ such that 
\[ |E(s_1,q)-E(s_0,q)|\leq C \varepsilon_1^2(s_1-s_0)\action{F(s_1)}^{4}|\ln(s_1-s_0)| \]
for any $0\leq s_0< s_1\leq T_{*}$.
\item (Derivatives) Assume additionally that for some $b\in \{0,1\}$, there holds that 
\begin{equation}\label{eq:initial-dgamma}
	\norm{\action{p}^b\nabla_{q,p} \gamma_0}{\Leb{\infty}_{q,p}}\leq \varepsilon_1.
\end{equation}
Then we have the bounds
\begin{equation}\label{eq:derivative-control-estimates}
\begin{aligned}
\norm{\action{p}^a \nabla_q \gamma(s)}{\Leb{\infty}_{q,p}}&\apprle \varepsilon_1\action{F(s)}^a,\quad 0\leq a\leq b,\\
\norm{\action{p}^a \nabla_p \gamma(s)}{\Leb{\infty}_{q,p}}&\apprle \varepsilon_1\action{F(s)}^{5+a},\quad 0\leq a\leq b.
\end{aligned}
\end{equation}
\end{enumerate}
\end{proposition}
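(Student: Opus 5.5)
The plan is a continuity (bootstrap) argument on $[0,T^*]$. The bootstrap hypothesis is $|E(s,q)|\leq 1$ for $s\in[0,T^*]$ (more generally $\le D$ with $D$ a fixed constant). The goal is to improve it to $|E|\leq C\varepsilon_1^2$.

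\emph{Moments under the hypothesis.} Lemma \ref{lemma:propagatmoment} applies with $r=2$, $a=1$ and with $r=\infty$, $0\leq a\leq m$. It gives \eqref{eq:p-gamma-L2} and \eqref{eq:p-gamma-L-infinity}, where the weight $\action{F(s)}^a$ comes from integrating $f(s)=F'(s)$. Also $\norm{\gamma(s)}{\Leb{2}}=\norm{\gamma_0}{\Leb{2}}$ by Lemma \ref{lem:Liouville}.

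\emph{Closing the bound on $E$.} Here the growth in $F$ must be beaten. Lemma \ref{lem:estimate-E} alone loses powers of $\action{F}$, so I would use Proposition \ref{prop:LipboundE} instead. The function $\gamma^2$ satisfies \eqref{eq:gamma-divergence-form} with $G=\lambda fE$, so $\norm{G/f}{\Leb{\infty}}\leq 1$. With $m\ge2$, $\norm{\action{p}^2\gamma}{\Leb{\infty}_{s,q,p}}\apprle\varepsilon_1\action{F(s_1)}^2$ on $[s_0,s_1]$. Since $(s_1-s_0)^3(F(s_1)-F(s_0))\leq(s_1-s_0)^2|\ln(s_1-s_0)|$ up to constants, the proposition gives
\[|E(s_1)-E(s_0)|\apprle \varepsilon_1^2(s_1-s_0)|\ln(s_1-s_0)|\action{F(s_1)}^4.\]
This is the stated modulus of continuity, though it is still not uniformly summable as $s_1\to1$.

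To get boundedness, I would decompose $[0,s]$ dyadically in $1-s$. Set $s_k=1-2^{-k}$, so that $F(s_k)\approx k$. On each block of length $2^{-k}$, the field changes by at most $\varepsilon_1^2 2^{-k}k^5$. Summing over $k$ gives $|E(s)|\leq|E(0)|+C\varepsilon_1^2\sum_k 2^{-k}k^5\leq C\varepsilon_1^2$, since $|E(0)|\apprle\varepsilon_1^2$ by Lemma \ref{lem:estimate-E}. For small $\varepsilon_1$ this strictly improves the hypothesis, and continuity closes the argument. The main obstacle is this step: the $\action{F}^4$ losses must be beaten by the $(s_1-s_0)$ gain on intervals adapted to the distance to $s=1$. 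If some block is too long, I would subdivide it further into subintervals of length $\sim 2^{-k}/k$ to absorb the log.

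\emph{Derivatives (ii).} I would use the commutation relations \eqref{eq:derivative-relation-gamma} with Lemma \ref{lem:Liouville}. For $\nabla_q\gamma$, the source is $f\,\nabla_qE\cdot\nabla_p\gamma$, and I bound $\nabla_qE$ via \eqref{eq:gradient-E-control}. That bound contains $\action{F}^4\norm{\gamma}{\Leb{2}}^2\apprle\varepsilon_1^2\action{F}^4$, the terms $\action{F}^{-6/5}\varepsilon_1\norm{\nabla_q\gamma}{\Leb{\infty}}$, and $\action{F}^{-1/5}\varepsilon_1^2\action{F}^4$. This crude bound is too lossy, so I would run a coupled bootstrap: assume $\norm{\nabla_q\gamma}{\Leb\infty}\leq 2C\varepsilon_1$ and $\norm{\nabla_p\gamma}{\Leb\infty}\leq 2C\varepsilon_1\action{F}^5$. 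The $p$-equation has source $-\nabla_q\gamma$, which integrated over a bounded $s$-interval gives only $O(\varepsilon_1)$. Hence the true difficulty is the $q$-equation, where $\int f|\nabla_qE||\nabla_p\gamma|\,ds$ must be $O(\varepsilon_1^2)$. Using \eqref{eq:bound-dE} with $A$ chosen as a negative power of $\action{F}$, the $\Leb{2}$ term is made small, the middle term carries the small factor $\varepsilon_1\action{F}^{-1/2+\theta}$ times the bootstrapped $\nabla_q$ bound, and the moment term uses $\norm{\action{p}^m\gamma}{\Leb{\infty}}$ with large $m$ if needed. I would then tune $A=\action{F}^{-N}$ so that $f\action{F}^{5}|\nabla E|$ is integrable in $s$, that is, decays faster than $\action{F}^{-1}$ in the variable $F$. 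The weighted case $b=1$ follows the same way from the last two identities in \eqref{eq:derivative-relation-gamma}, using the improved $E$ bound, with each $p$-weight costing one more $\action{F}$.
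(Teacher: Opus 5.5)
Your part (i) is the paper's argument: a continuity argument on $\|E\|_{L^\infty}$, then Lemma \ref{lemma:propagatmoment}, then Proposition \ref{prop:LipboundE} with $G=\lambda fE$, then summation over the blocks $[1-2^{-k},1-2^{-k-1}]$ giving $\sum_k k^5 2^{-k}$. One small slip: $(s_1-s_0)^3(F(s_1)-F(s_0))\apprle (s_1-s_0)^2|\ln(s_1-s_0)|$ is false for long intervals reaching toward $s=1$ (take $s_0=0$, $s_1\to 1$). On the dyadic blocks, however, $F(r_{k+1})-F(r_k)\le 1$, and that is all you use.

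\textbf{The gap is in part (ii).} Your plan keeps $\|\nabla_q\gamma\|_{L^\infty}=O(\varepsilon_1)$ uniformly on $[0,1)$. This forces $\int_0^1 f(s)\|\nabla_q E(s)\|_{L^\infty}\,ds<\infty$, i.e.\ $|\nabla_qE|$ must decay faster than $\langle F\rangle^{-1}$ in the variable $F$. Lemma \ref{lem:estimate-E} cannot give this.
\begin{itemize}
\item In \eqref{eq:bound-dE} the exponents $-1/2+\theta$ and $-\theta$ are negative. Taking $A=\langle F\rangle^{-N}$ shrinks the first term but inflates the other two to $\varepsilon_1\langle F\rangle^{N(1/2-\theta)}\|\nabla_q\gamma\|$ and $\varepsilon_1^2\langle F\rangle^{N\theta+4}$. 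Your factor $\langle F\rangle^{-1/2+\theta}$ corresponds to $A=\langle F\rangle$, not to a negative power.
\item Balancing the first and third terms gives at best growth like $\langle F\rangle^{4/(1+\theta)}$.
\item Higher moments do not help. The bound $\|\langle p\rangle^m\gamma\|\apprle\varepsilon_1\langle F\rangle^m$ only localizes velocities to $|p|\apprle\langle F\rangle$, which reflects the logarithmic drift $p\approx\lambda F(s)E_1$.
\end{itemize}

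This is not just a lossy estimate; the target itself fails. Along characteristics $\frac{d}{ds}\nabla_q\gamma=-\lambda f\,\nabla_qE\cdot\nabla_p\gamma$, where $\nabla_qE(s)$ does not decay (it should tend to $\nabla E_1$, generically nonzero) and $\int_0^1 f\,ds=\infty$. Heuristically, $\nu\to\mu_\infty$ gives $\nabla_q\gamma\approx\nabla_q\mu_\infty-\lambda F(s)(\nabla E_1)^{T}\nabla_p\mu_\infty$ along the modified trajectories, which is unbounded as $s\to1$. So uniform boundedness of $\nabla_q\gamma$ cannot be closed in general.

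\textbf{The fix is already in your own observation about the $p$-equation.} The paper's proof bootstraps the transposed bounds:
\[
\|\langle p\rangle^a\nabla_p\gamma\|\apprle\varepsilon_1\langle F\rangle^a,\qquad \|\langle p\rangle^a\nabla_q\gamma\|\apprle\varepsilon_1\langle F\rangle^{5+a}.
\]
So the $q$ and $p$ labels in \eqref{eq:derivative-control-estimates} are swapped relative to what the argument proves.
\begin{itemize}
\item $\nabla_q\gamma$ absorbs the growth. Using \eqref{eq:gradient-E-control} with $A=\langle F\rangle^4$, $\theta=1/5$, one has $|\nabla_qE|\apprle\varepsilon_1^2\langle F\rangle^4+\varepsilon_1\langle F\rangle^{-6/5}\|\nabla_q\gamma\|$.
\item Since $\int_0^s f\langle F\rangle^4\,d\tau\apprle\langle F(s)\rangle^5$, this yields the $\varepsilon_1^3\langle F\rangle^5$ term.
\item The kernel $\langle F\rangle^{-6/5}$ is integrable in $dF$, so Gronwall absorbs it at the cost of a factor $\exp(C\varepsilon_1^2)$.
\item $\nabla_p\gamma$ stays $O(\varepsilon_1)$ because its source $-\nabla_q\gamma$ is integrated against $ds$, not $f\,ds$, and $\int_0^1\langle F(s)\rangle^5\,ds<\infty$.
\end{itemize}
The weighted case $b=1$ then follows from the last two identities in \eqref{eq:derivative-relation-gamma}, as you indicate, with the roles swapped.
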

\begin{proof}
(i) Let $C>2$ be a constant larger than all the implied constants that appear in Lemma \ref{lem:estimate-E}, Theorem \ref{thm:lwp}, and Proposition \ref{prop:LipboundE}. Let $\varepsilon_1>0$ be small enough so that 
\[ 4C_*^2 \varepsilon_1^2\leq 1,\]
where $C_*\geq C$ and $\varepsilon_1>0$ will be determined later.

Define 
\[ I=\{s \in [0,T*] : \norm{E(s)}{\Leb{\infty}_q}\leq 2C_*^2\varepsilon_1^2\}.\]
By \eqref{eq:boundE} with $A=1$ and \eqref{eq:initial-dgamma}, we get 
\[
\norm{E(0)}{\Leb{\infty}_q}\leq C\left(\norm{\gamma_0}{\Leb{2}_{q,p}}^2+\norm{\gamma_0}{\Leb{\infty}_{q,p}}^2+\norm{|p|^2\gamma_0}{\Leb{\infty}_{q,p}}^2\right)\leq 2C\varepsilon_1^2,
\]
which implies $0\in I$. By continuity, it suffices to show that $I$ is open. 

By Lemma \ref{lemma:propagatmoment}, $0\leq a\leq m$, for $s \in I$, and $r\in \{2,\infty\}$, we have
\begin{equation}\label{eq:momentum-s-interval}
\norm{\action{p}^a \gamma(s)}{\Leb{r}_{q,p}}\apprle \varepsilon_1+\varepsilon_1\action{F(s)}^a(C_*^2\varepsilon_1^2)^a.
\end{equation}
Then by \eqref{eq:momentum-s-interval} and Proposition \ref{prop:LipboundE}, for $s_1,s_0\in I$ satisfying $0\leq s_0<s_1<1$, we have 
\begin{equation}\label{eq:electric-field-lipschitz-dyadic}
\begin{aligned}
 |E(s_1)-E(s_0)|&\apprle (s_1-s_0) |\ln (s_1-s_0)| \norm{\action{p}^2\gamma}{L^\infty_s([s_0,s_1],L^\infty_{q,p})}^2\\
 &\relphantom{=}+(s_1-s_0)^2[\norm{\gamma}{\Leb{2}_{q,p}}^2+\norm{\action{p}^2\gamma}{L^\infty_s([s_0,s_1],L^\infty_{q,p})}^2]\\
 &\relphantom{=}+(s_1-s_0)^3\left(F(s_1)-F(s_0) \right)\norm{E}{\Leb{\infty}_{s,q}} \norm{\action{p}^2\gamma}{L^\infty_s([s_0,s_1],L^\infty_{q,p})}^2\\
 &\apprle \varepsilon_1^2(s_1-s_0)|\ln(s_1-s_0)|\action{F(s_1)}^4\\
 &\relphantom{=}+\varepsilon_1^2(s_1-s_0)^2\action{F(s_1)}^4\\
 &\relphantom{=}+\varepsilon_1^2(s_1-s_0)^3(F(s_1)-F(s_0))\action{F(s_1)}^4.
 \end{aligned}
 \end{equation}
 
 Define $r_0=0$, $r_k=\sum_{j=1}^k 2^{-j}$ for $k\geq 1$. Then $r_{k+1}-r_k=2^{-k-1}$ and $r_k\rightarrow 1$ as $k\rightarrow\infty$. Since 
\begin{align*}
F(r_{k+1})-F(r_k)&=(r_{k+1}-r_k)\frac{1}{1-(r_k^*)^2}\leq \frac{1}{1-r_{k+1}}(r_{k+1}-r_k)\leq 2^{k+1}(r_{k+1}-r_k)\leq 1
\end{align*}
and
\begin{align*}
\action{F(r_{k+1})}&\apprle k,
\end{align*}
it follows that for $r_k\leq s_0<s_1\leq r_{k+1}$, we have 
 \begin{equation}\label{eq:convergence-rate}
 \norm{E(s_1)-E(s_0)}{\Leb{\infty}_q}\apprle \varepsilon_1^2 \left(\frac{k^5}{2^k}+\frac{k^4}{2^{2k}}+\frac{k^4}{2^{3k}}\right).
 \end{equation}
 
 Since $s\in I$ and $E$ is continuous, it follows from \eqref{eq:convergence-rate} that
 \begin{equation}\label{eq:electric-field-boundedness}
 \begin{aligned}
 \norm{E(s)}{\Leb{\infty}_q}&\leq \norm{E(0)}{\Leb{\infty}_q}+C_2\varepsilon_1^2 \sum_{k=1}^\infty  \left(\frac{k^5}{2^k}+\frac{k^4}{2^{2k}}+\frac{k^4}{2^{3k}}\right)\\
 &\leq (2C+C_3)\varepsilon_1^2.
 \end{aligned}
 \end{equation}
 Hence by choosing $C_*\geq 2C+C_3$, we get $I$ is open.
 
 To show (ii), we use a similar bootstrap argument. We assume the bootstrap assumption  below:
 \begin{equation}\label{eq:bootstrap-assumption-derivatives}
 \begin{aligned}
 \norm{\action{p}^a\nabla_p \gamma(s)}{\Leb{\infty}_{q,p}}&\leq 2C^4 \varepsilon_1 F(s)^a,\quad 0\leq a\leq b,\\
 \norm{\action{p}^a\nabla_q\gamma(s)}{\Leb{\infty}_{q,p}}&\leq 2C^2 \varepsilon_1 F(s)^{5+a},\quad 0\leq a\leq b.
 \end{aligned}
 \end{equation}
 where $C^2\varepsilon_1\geq1$ and $\varepsilon_1\ll 1$ to be chosen later. The main point of the bootstrap argument is that the final estimate yields a factor of $\varepsilon_1^3$, which is sufficient to absorb all the constants.

 By \eqref{eq:derivative-relation-gamma} and the bootstrap assumption \eqref{eq:bootstrap-assumption-derivatives}, we have 
\[
\norm{\nabla_p\gamma(s)}{\Leb{\infty}_{q,p}}\leq \norm{\nabla_p \gamma(0)}{\Leb{\infty}_{q,p}}+\int_0^s \norm{\nabla_q\gamma(\tau)}{\Leb{\infty}_{q,p}}\leq C^3 2\varepsilon_1.
\]

 By \eqref{eq:derivative-relation-gamma} and the bootstrap assumption \eqref{eq:bootstrap-assumption-derivatives}, we have 
 \begin{equation}\label{eq:gradient-gamma-q-bootstrap-1}
 \begin{aligned}
 \norm{\nabla_q \gamma(s)}{\Leb{\infty}_{q,p}}&\leq \norm{\nabla_q\gamma(0)}{\Leb{\infty}_{q,p}}+\int_0^s f(\tau)\norm{\nabla_x E}{\Leb{\infty}_q}\norm{\nabla_p \gamma}{\Leb{\infty}_{q,p}}\myd{\tau}\\
 &\apprle \varepsilon_1+\varepsilon_1C^4\int_0^s f(\tau)\norm{\nabla_q E}{\Leb{\infty}_q}\myd{\tau}.
 \end{aligned}
 \end{equation}
 
 By \eqref{eq:gradient-E-control}, we have
 \begin{equation}\label{eq:gradient-E-bound-bootstrap}
 \begin{aligned}
 \norm{\nabla_q E(s)}{\Leb{\infty}_q}&\apprle F(s)^4\norm{\gamma}{\Leb{2}_{q,p}}^2+F(s)^{-6/5}\norm{\gamma}{\Leb{\infty}_{q,p}}\norm{\nabla_q\gamma}{\Leb{\infty}_{q,p}}+F(s)^{-1/5}\norm{|p|^2\gamma}{\Leb{\infty}_{q,p}}^2\\
 &\apprle \varepsilon_1^2F(s)^4 +\varepsilon_1 F(s)^{-6/5}\norm{\nabla_q\gamma}{\Leb{\infty}_{q,p}}+\varepsilon_1^2 F(s)^{4-1/5}.
 \end{aligned}
 \end{equation}
Then it follows from \eqref{eq:gradient-gamma-q-bootstrap-1} and \eqref{eq:gradient-E-bound-bootstrap} that 
 \begin{align*}
  \norm{\nabla_q \gamma(s)}{\Leb{\infty}_{q,p}}&\apprle \varepsilon_1+\varepsilon_1^3 C^4\int_0^s f(\tau) F(\tau)^4 \myd{\tau}+\varepsilon_1^3 C^4\int_0^s f(\tau)F(\tau)^{2-1/5}\myd{\tau}\\
  &\relphantom{=}+\varepsilon_1^2 C^4\int_0^s F(\tau)^{-6/5}f(\tau)\norm{\nabla_q\gamma}{\Leb{\infty}_{q,p}}\myd{\tau}\\
  &\apprle \varepsilon_1+\varepsilon_1^3C^4 F(s)^5+\varepsilon_1^2 C^4\int_0^s F(\tau)^{-6/5}f(\tau)\norm{\nabla_q\gamma}{\Leb{\infty}_{q,p}}\myd{\tau}.
 \end{align*}
Hence it follows from Gr\"{o}nwall's inequality and choosing $\varepsilon_1$ small enough that 
\begin{equation}\label{eq:gradient-q-bound}
 \norm{\nabla_q \gamma(s)}{\Leb{\infty}_{q,p}}\leq [\varepsilon_1+\varepsilon_1^3 C_0C^4 F(s)^5]\exp(\varepsilon_1^2C^4C_0)\leq C^2\varepsilon_1 F(s)^5.
\end{equation}
Then by \eqref{eq:initial-dgamma},  \eqref{eq:electric-field-boundedness}, \eqref{eq:bootstrap-assumption-derivatives}, and \eqref{eq:gradient-gamma-q-bootstrap-1}, we have 
\begin{align*}
\norm{p^m \partial_{p^n}\gamma(s)}{\Leb{\infty}_{q,p}}&\leq \norm{|p|\nabla_p \gamma(0)}{\Leb{\infty}_{q,p}}\\
&\relphantom{=}+\int_0^s \norm{|p|\nabla_q\gamma(\tau)}{\Leb{\infty}_{q,p}}\myd{\tau}+\int_0^s\norm{E}{\Leb{\infty}_q}\norm{\nabla_p \gamma(\tau)}{\Leb{\infty}_{q,p}}f(\tau)\myd{\tau}\\
&\leq \varepsilon_1+2C^2\varepsilon_1 C_0+C_*\varepsilon_1^3 C^4 C_0 F(s)\leq C^4\varepsilon_1 F(s)^a,
\end{align*}
where we choose $\varepsilon_1$ small enough in the last inequality.
To obtain the last estimate, it follows from \eqref{eq:gradient-E-bound-bootstrap} and \eqref{eq:gradient-q-bound} that
\begin{align*}
\norm{\nabla_q E(s)}{\Leb{\infty}_{q,p}}\lesssim  \varepsilon_1^2C^4 F(s)^4.
\end{align*}

Hence it follows that
\begin{align*}
\norm{p^m\nabla_q\gamma(s)}{\Leb{\infty}_{q,p}}&\leq \norm{|p|\nabla_q\gamma_0}{\Leb{\infty}_{q,p}}\\
&\relphantom{=}+\int_0^s \left(\norm{\nabla_q E}{\Leb{\infty}_q}\norm{|p|\nabla_p\gamma}{\Leb{\infty}_{q,p}}+\norm{E}{\Leb{\infty}_q}\norm{\nabla_q\gamma}{\Leb{\infty}_{q,p}}\right)f(\tau)\myd{\tau}\\
&\leq \varepsilon_1+\varepsilon_1^3 C^8 C_0 \int_0^s F(\tau)^5f(\tau)\myd{\tau}\\
&\leq \varepsilon_1+C_0C^8\varepsilon_1^3 F(s)^6.
\end{align*}
Then the proof follows by choosing again $\varepsilon_1$ small enough.
\end{proof}

Now we are ready to prove Theorem \ref{thm:A}.
\begin{proof}[Proof of Theorem \ref{thm:A}]
First, we note that \eqref{eq:initial-mu} leads to \eqref{eq:initial-gamma} and \eqref{eq:initial-dgamma} by the lens transform. By Theorem \ref{thm:lwp}, there exists a local-in-time strong solution $\gamma$ of \eqref{eq:VP-lens} on $[0,S)$. To show that the solution exists globally, recall \eqref{eq:electric-field-lipschitz-dyadic} that for $0\leq s_0\leq s_1<1$, we have 
\begin{equation}\label{eq:lipschitz-recall-inequality}
\begin{aligned}
\norm{E(s_1)-E(s_0)}{\Leb{\infty}_q}&\apprle\varepsilon_1^2(s_1-s_0)|\ln(s_1-s_0)|\action{F(s_1)}^4\\
 &\relphantom{=}+\varepsilon_1^2(s_1-s_0)^2\action{F(s_1)}^4\\
 &\relphantom{=}+\varepsilon_1^2(s_1-s_0)^3(F(s_1)-F(s_0))\action{F(s_1)}^4.
 \end{aligned}
\end{equation}

Let $r_k=1-1/2^k$. Then by \eqref{eq:convergence-rate}, $\{E(r_k)\}$ forms a Cauchy sequence in $\Leb{\infty}_q$ and hence the limit $\lim_{k\rightarrow\infty} E(r_k,q)=E_1(q)$ exists. For $s\in (0,1)$, choose $k$ so that $r_k< s\leq r_{k+1}$. Then by \eqref{eq:lipschitz-recall-inequality}, we have  
\begin{align*}
\norm{E_1-E(s)}{\Leb{\infty}_q}&\leq \norm{E_1-E(r_k)}{\Leb{\infty}_q}+\norm{E(r_k)-E(s)}{\Leb{\infty}_q}\\
&\apprle \varepsilon_1^2 \sum_{l=k}^\infty \left(\frac{l^5}{2^l}+\frac{l^4}{2^{2l}}+\frac{l^4}{2^{3l}}\right)\\
&\relphantom{=}+\varepsilon_1^2(s-r_k)|\ln(s-r_k)|\action{F(s)}^4+\varepsilon_1^2(s-r_k)^2\action{F(s)}^4\\
&\relphantom{=}+\varepsilon_1^2(s-r_k)^3(F(s)-F(r_k))\action{F(s)}^4\\
&\apprle \varepsilon_1^2 (1-s)\action{\ln (1-s)}^5.
\end{align*}
Hence by Proposition \ref{prop:bootstrap}, the unique local solution becomes global solution of \eqref{eq:VP-lens} in $[0,1)$. Moreover, it follows from \eqref{eq:bound-dE} that
\begin{equation*}
	\begin{split}
\norm{\nabla E}{L^\infty}\apprle \varepsilon_0^2 \action{F(s)}^4	.	\end{split}
\end{equation*}

To describe the asymptotic behavior of solutions, define 
\begin{equation}\label{eq:nu}
	\Phi(s,q,p):=(\Phi^1(s,q,p),\Phi^2(s,q,p))=(q+(s-1)p+\lambda(s-1)F(s)E_1(q),p+\lambda F(s)E_1(q))
\end{equation}
and 
\[ \nu(s,q,p):=\gamma(s,\Phi(s,q,p)).\]
A direct computation gives
\begin{align*}
\partial_s \nu(s,q,p)&=\partial_s \gamma+[p+\lambda F(s)E_1(q)+\lambda(s-1)F'(s)E_1(q)]\cdot \nabla_q \gamma + \lambda F'(s)E_1(q)\cdot\nabla_p\gamma\\
&=\lambda[(s-1)f(s)E_1(q)]\cdot \nabla_q \gamma+\lambda f(s)[E_1(q)-E(s,\Phi^1(s,q,p))]\cdot\nabla_p\gamma 
\end{align*}

By \eqref{eq:derivative-control-estimates}, we have
\begin{equation*}
	\begin{split}
		\norm{\nabla_q\gamma}{L^\infty_{q,p}}\apprle \varepsilon_0\quad\text{and} \quad \norm{\action{p}\gamma}{L^\infty_{q,p}}\apprle \varepsilon_0\action{F(s)}^6.
	\end{split}
\end{equation*}

Since
\begin{equation*}
	\begin{split}
|E_1(q)-E(s,\Phi^1(s,q,p))|&	\leq |E_1(q)-E(s,q)|+|E(s,q)-E(s,\Phi^1(s,q,p))| \\	
&\apprle \varepsilon_0^2 (1-s)\action{\ln(1-s)}^5+\norm{\nabla E}{L^\infty}(1-s)|\Phi^2(s,q,p)|,
	\end{split}
\end{equation*}
we have
\begin{equation}\label{bound:ds-nu}
	\begin{split}
\|\pr_s\nu\|_{L^\infty_{q,p}}\apprle\ (1-s)\action{\ln (1-s)}^{10}	,	
	\end{split}
\end{equation}
which is integrable near $s=1$. This proves that 
\[ \lim_{s\rightarrow 1-} \nu(s,q,p)=\mu_\infty(q,p) \quad \hbox{in }  L^\infty_{q,p}  \]
with $E_{1}=E_{\infty}$. Applying the inverse of the lens transform, we obtain \eqref{eq:scattering} and \eqref{eq:scattering-trajectory}. Finally, it remains to show that $E_{\infty}=E[\mu_{\infty}]$, which is 
	\begin{equation*}
		\begin{split}
E[\gamma](s,x)=\iint_{\mathbb{R}^2_q\times\mathbb{R}^2_p} \frac{x-q}{|x-q|^2} \gamma^2(s,q,p)\myd{q}\myd{p}\longrightarrow E[\mu_{\infty}]=\iint_{\mathbb{R}^2_q\times\mathbb{R}^2_p} \frac{x-q}{|x-q|^2} \mu_{\infty}^2(s,q,p)\myd{q}\myd{p}	
		\end{split}
	\end{equation*}
	as $s\rightarrow 1-$.
Using the change of variable \eqref{eq:nu}, we have
\begin{equation*}
	\begin{split}
E[\gamma](s,x)=\iint_{\mathbb{R}^2_q\times\mathbb{R}^2_p} \frac{x-(q+(s-1)G(s,q,p))}{|x-(q+(s-1)G(s,q,p)|^2}\nu^2(s,q,p)\myd{q}\myd{p}, 		
	\end{split}
\end{equation*}	
where
\begin{equation*}
	\begin{split}
	G(s,q,p)=p+\lambda F(s)E_1(q).	
	\end{split}
\end{equation*}	
By Fatou's lemma and conservation, we have $\norm{\mu_{\infty}}{L^2_{q,p}}\leq \varepsilon$ and $\norm{\nu}{L^2_{q,p}}\leq \varepsilon$. Therefore, by 	\eqref{bound:ds-nu}, we have
\begin{equation*}
	\begin{split}
&\left| \iint_{\mathbb{R}^2_q\times\mathbb{R}^2_p} \frac{x-q}{|x-q|^2}[\nu^2(s,q,p)-\mu^2_{\infty}(q,p)]\myd{q}\myd{p}     \right|	\\
&\approx \left|\int_0^\infty \iint_{\mathbb{R}^2_q\times\mathbb{R}^2_p} (\nabla\chi)(R^{-1}(x-q))[\nu^2(s,q,p)-\mu^2_{\infty}(q,p)]\myd{q}\myd{p}\frac{\myd{R}}{R^2} \right|	\\
&\lesssim \int_0^\infty R^{-2}\min\{\norm{\nu}{L^2_{q,p}}+\norm{\mu_{\infty}}{L^2_{q,p}},\, (1-s)^{0.9}  \} \myd{R} \, \lesssim (1-s)^{0.45}\longrightarrow 0
	\end{split}
\end{equation*}
as $s\rightarrow 1-$. 
It remains to show that
\begin{equation*}
	\begin{split}
\lim_{s\rightarrow 1}\iint_{\mathbb{R}^2_q\times\mathbb{R}^2_p} \off{\frac{x-(q+(s-1)G(s,q,p))}{|x-(q+(s-1)G(s,q,p)|^2}-\frac{x-q}{|x-q|^2}}\nu^2(s,q,p)\myd{q}\myd{p}		=0.
	\end{split}
\end{equation*}	
To show this, we change the coordinates back to $\gamma$:
\begin{equation*}
	\begin{split}
		&\left|\iint_{\mathbb{R}^2_q\times\mathbb{R}^2_p} \off{\frac{x-(q+(s-1)G(s,q,p))}{|x-(q+(s-1)G(s,q,p)|^2}-\frac{x-q}{|x-q|^2}}\nu^2(s,q,p)\myd{q}\myd{p}		\right| \\
  \quad &= \left|\iint_{\mathbb{R}^2_q\times\mathbb{R}^2_p} \off{\frac{x-q}{|x-q|^2}-\frac{x-q+p(s-1)}{|x-q+p(s-1)|^2}}\gamma^2(s,q,p)\myd{q}\myd{p}		\right| \\
		&\approx \left|\int_0^\infty\iint_{\mathbb{R}^2_q\times\mathbb{R}^2_p} \off{(\nabla\chi)(R^{-1}(x-q))-(\nabla\chi)(R^{-1}(x-q+p(s-1))))}\gamma^2(s,q,p)\myd{q}\myd{p}	\frac{\myd{R}}{R^2}	\right|\\
		&\lesssim\int_0^\infty\min\{ R^{-3}\norm{\action{p}\gamma}{L^2_{q,p}}^2(1-s),\norm{\action{p}^3\gamma}{L^\infty_{q,p}}\}\, \myd{R}\lesssim (1-s)^{0.9}\longrightarrow 0 ,\, \hbox{ as } s\to 1,
	\end{split}
\end{equation*}
where we used \eqref{eq:p-gamma-L2} and \eqref{eq:p-gamma-L-infinity} in the last inequality. This completes the proof of Theorem \ref{thm:A}.
\end{proof}

\begin{remark}\label{rem:linear-scattering}
If $d\geq 3$, then we can prove linear scattering of the solution. Recall that $\mu$ is a solution to \eqref{eq:VP-rep-potential-reformulate} if and only if $\gamma$ satisfies
\[
\partial_s \gamma+p\cdot\nabla_q\gamma+(1-s^2)^{(d-4)/2}\lambda E[\gamma]\cdot\nabla_p\gamma=0,\quad s\in (-1,1),
\]

If we define 
\[ \Phi(s,q,p):=(q+(s-1)p,p),\quad \nu(s,q,p)=\gamma(s,\Phi(s,q,p)),\]
then we have 
\[ \partial_s \nu =\partial_s \gamma+p\cdot \nabla_q\gamma=-\lambda (1-s^2)^{\frac{d-4}{2}} E\cdot\nabla_p \gamma,\]
which implies
\[ \norm{\partial_s \nu}{\Leb{\infty}_{q,p}}\apprle \varepsilon_1^3 (1-s^2)^{\frac{d-4}{2}}.\]
Note that $(1-s^2)^{\frac{d-4}{2}}$ is integrable on $[0,1)$ if $d\geq 3$. Hence the solution scatters linearly.
\end{remark}

\section{Construction of the wave operator}\label{sec:wave}

In this section, we will prove Theorems \ref{thm:B} and \ref{thm:c}, the construction of wave operators and scattering map. To construct the wave operator, we need to solve the Cauchy problem of \eqref{eq:VP-rep-potential-reformulate} at infinity or equivalently the Cauchy problem of \eqref{eq:VP-lens} starting from $s=1$. Since the Vlasov--Poisson system is time reversible, we consider the Cauchy problem of \eqref{eq:VP-lens} from $s=-1$. To mitigate the singularity,  we define the type-3 generating function
\begin{equation*}
	\begin{split}
S(s,w,p)=w\cdot p+\frac{|p|^2}{2}(s+1)-\lambda F(s)\phi_{-1}(w),		
	\end{split}
\end{equation*}
which produces the following symplectic change of variables:
\begin{equation}\label{eq:change-of-variable-sigma}
	\begin{aligned}
		q &= w+(s+1)z+\lambda F(s)(s+1)E_{-1}(w), &\quad p &= z+\lambda F(s)E_{-1}(w),\\
		w &= q-(s+1)p, &\quad z &= p-\lambda F(s)E_{-1}(q-(s+1)p),
	\end{aligned}
\end{equation}
where the new Hamiltonian is $\mathcal{K}=\H-\pr_s S=-\lambda f(s)[\phi(s,q)-\phi_{-1}(w)]$ and the Jacobian is
\begin{align*} 
	\frac{\pr(w,z)}{\pr(q,p)}=\begin{pmatrix}
		I_2 & -(s+1)I_2 \\
		-\lambda F(s)\nabla E_{-1}(q-(s+1)p) & I_2+\lambda F(s)(s+1)\nabla E_{-1}(q-(s+1)p) 
	\end{pmatrix},
\end{align*}
where $E_{-1}=\nabla\phi_{-1}$ and the  vector fields are
\begin{equation}\label{eq:expression-dK}
	\begin{aligned}
	&\nabla_w\mathcal{K}=-\lambda f(s)[E(s,q)-E_{-1}(w)]-\lambda^2 (s+1)f(s)F(s)\nabla E_{-1} (w)E(s,q), \\ &\nabla_z\mathcal{K}=-\frac{\lambda }{1-s} E(s,q) .
\end{aligned}
\end{equation}
Note that this change of variables is similar to \eqref{eq:nu} and preserves volume. If we define
\[ \sigma(s,w,z):=\gamma (s,q,p), \] 
then $\sigma$ solves
\begin{equation}\label{eq:sigma}
	\pr_s\sigma+\{\sigma,\KK\}=0,
\end{equation}
where $\{\sigma,\KK\}=\nabla_w\sigma\cdot \nabla_z\KK-\nabla_z\sigma\cdot\nabla_w\KK$.
To state our theorem, we define the electric field	
\begin{equation*}
	\begin{aligned}
E[\sigma](s, Q) & =E(s, Q)=\frac{1}{2\pi}\iint_{\mathbb{R}^2_w\times\mathbb{R}^2_z} \frac{Q-q(s, w, z)}{|Q-q(s, w, z)|^2} \sigma^2(s, w, z) \,\mathrm{d}w \,\mathrm{d} z \\
& =\frac{1}{2\pi}\iint_{\mathbb{R}^2_q\times\mathbb{R}^2_p} \frac{Q-q}{|Q-q|^2} \gamma^2(s, q, p) \,\mathrm{d}q \,\mathrm{d}p.
\end{aligned}
\end{equation*}
 Now we are ready to state the main theorem of this section.
\begin{theorem}\label{thm:wave-operator}
Assume the ``initial'' data $\sigma_{-1}$ and $E_{-1}:=E[\sigma_{-1}]$ satisfy
\begin{equation}\label{eq:bound-E0}
	\begin{split}
	\norm{E_{-1}}{W^{3,\infty}}	\leq c_0^2,
	\end{split}
\end{equation}
and 
\begin{equation}\label{eq:bound-sigma0}
\left\|\sigma_{-1}\right\|_{L_{w, z}^2}+\left\|\langle z\rangle^4 \sigma_{-1}\right\|_{L_{w, z}^{\infty}}+\sum_{0 \leq m+n \leq 2}\left\|\langle z\rangle^m \nabla_z^m \nabla_w^n \sigma_{-1}\right\|_{L_{w, z}^{\infty}} \leq c_0 .	
\end{equation}
Then there exists  a unique solution $\sigma \in C_s^0\left(\left[-1, T(c_0)\right]; L_{w, z}^2\right)$ of \eqref{eq:sigma}  with ``initial" data $\sigma(s=-1)=\sigma_{-1}$, and such that $(1+s) \partial_s \sigma, \nabla_{w, z} \sigma \in C_{s, w, z}^0$. Moreover, for $-1 \leq s\leq T(c_0)$ we have that for any $\ell \in \mathbb{N}$,
\begin{align}
&\norm{\sigma(s)}{\Leb{2}_{w,z}}+\norm{\action{z}^4 \sigma(s)}{\Leb{\infty}_{w,z}}+\norm{\nabla_{w,z}\sigma(s)}{\Leb{\infty}_{w,z}}\apprle c_0,\label{eq:sigma-c0} \\
&{\norm{\action{w,z}^l \sigma(s)}{\Leb{r}_{w,z}}\apprle \norm{\action{w,z}^l \sigma_{-1}}{\Leb{r}_{w,z}},\quad r\in \{2,\infty\}.}\label{eq:sigma-sigma0}
\end{align}
In particular, if $c_0$ is small enough, we can take $T(c_0)=0$.
\end{theorem}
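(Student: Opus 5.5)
The plan is to construct $\sigma$ by a Picard iteration for \eqref{eq:sigma} that starts at $s=-1$, in the spirit of the proof of Theorem \ref{thm:lwp}. We set $\sigma_0(s)=\sigma_{-1}$. Given $\sigma_n$, we form the density $\gamma_n$ through \eqref{eq:change-of-variable-sigma}, its field $E_n=E[\sigma_n]$, and the Hamiltonian $\mathcal{K}_n=-\lambda f(s)[\phi_n(s,q)-\phi_{-1}(w)]$. We then solve the linear transport problem $\pr_s\sigma_{n+1}+\{\sigma_{n+1},\mathcal{K}_n\}=0$ with $\sigma_{n+1}(-1)=\sigma_{-1}$ by characteristics. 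The whole point is that the vector field \eqref{eq:expression-dK} is integrable in $s$ near $s=-1$, even though $f(s)\sim (1+s)^{-1}/2$ there. There are two pieces to check. For $\nabla_z\mathcal{K}$, the coefficient $(1-s)^{-1}$ is bounded near $s=-1$. For $\nabla_w\mathcal{K}$, we need $|E_n(s,q)-E_{-1}(w)|\apprle c_0^2(1+s)\action{\ln(1+s)}^{k}$; the second term of $\nabla_w\mathcal{K}$ then carries $(s+1)f(s)F(s)\sim|\ln(1+s)|$, which is integrable. So the characteristics exist down to $s=-1$ and the flow is measure preserving. By Lemma \ref{lem:Liouville}, the $L^2$ and $L^\infty$ norms of $\sigma_{n+1}$ are conserved, which gives the first part of \eqref{eq:sigma-c0}.

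The key estimate is the almost-Lipschitz bound on $E_n-E_{-1}$ near $s=-1$. Since $q=w+(s+1)z+\lambda F(s)(s+1)E_{-1}(w)$, I would split
\[
E_n(s,Q)-E_{-1}(Q)=\iint \big[K(Q-q(s,w,z))-K(Q-w)\big]\sigma_n^2\,dw\,dz+\iint K(Q-w)\,[\sigma_n^2(s)-\sigma_{-1}^2]\,dw\,dz,
\]
with $K(x)=x/(2\pi|x|^2)$. I use that $E_{-1}=E[\sigma_{-1}]$ is computed from $\sigma_{-1}$ in the variable $w$ (at $s=-1$ we have $q=w$).

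The first term is estimated with the dyadic decomposition \eqref{eq:electric-field-rep}, exactly as at the end of the proof of Theorem \ref{thm:A}. Since $|q-w|\apprle(1+s)(|z|+|F(s)|c_0^2)$, this term is bounded by $(1+s)^{0.9}$-type quantities times $\norm{\action{z}^4\sigma_n}{L^\infty}^2+\norm{\action{z}\sigma_n}{L^2}^2$.

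For the second term I would aim for the sharper rate $(1+s)|\ln(1+s)|^k$ by arguing as in Proposition \ref{prop:LipboundE}. That is, I test the continuity equation for $\sigma_n^2$ in $(w,z)$ against the dyadic pieces. The transport velocity is $(\nabla_z\mathcal{K},-\nabla_w\mathcal{K})$, whose $L^\infty$ norm is $\apprle |\ln(1+s)|$ under the bootstrap, so its time integral is $\apprle(1+s)|\ln(1+s)|$. The weights $\action{z}^4$ together with the $w$-moments \eqref{eq:sigma-sigma0}, which are propagated by Lemma \ref{lem:Liouville} using $|\{\action{w,z}^\ell,\mathcal{K}\}|\apprle\action{w,z}^{\ell}|\nabla\mathcal{K}|$, control the tails.

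Next, derivatives. I differentiate \eqref{eq:sigma} in $w$ and $z$, obtaining commutator identities analogous to \eqref{eq:derivative-relation-gamma}, now with forcing terms $\nabla^2_{w,z}\mathcal{K}\cdot\nabla_{w,z}\sigma$. The dangerous entry is $\nabla_w\nabla_w\mathcal{K}$, which contains $f(s)[\nabla E_n(s,q)\,\pr_w q-\nabla E_{-1}(w)]$. Its $z$-dependence enters through $\pr_w q$ and $q-w$, and it is only integrable if $\nabla E_n(s,q)-\nabla E_{-1}(w)=O((1+s)^{\delta})$. This requires a Hölder-type continuity of $\nabla E$ at $s=-1$. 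I would get it from Lemma \ref{lem:estimate-E} applied to the differences, using the second derivatives $\nabla_w^2\sigma$ and $\nabla_z\nabla_w\sigma$ assumed in \eqref{eq:bound-sigma0}, the bound $E_{-1}\in W^{3,\infty}$, and interpolation via Proposition \ref{prop:interpolation}.

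Hence the bootstrap must propagate the full weighted norm $\sum_{m+n\le2}\norm{\action{z}^m\nabla_z^m\nabla_w^n\sigma}{L^\infty}$. The weight $\action{z}^m$ is needed because $\nabla_z$-derivatives produce factors $(s+1)$ multiplying $\nabla_q E$, while the correction $q-w$ grows like $|z|$. I expect this derivative and Hölder step to be the main obstacle: it is where the weights, the logarithmic singularity $F(s)$ and the interpolation must balance, and where smallness $c_0^2$ of $E_{-1}$ is needed to absorb the $\lambda^2(s+1)fF\nabla E_{-1}$ term.

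Finally, contraction. I would estimate differences $\sigma_{n+1}-\sigma_n$ in $L^\infty_sL^2\cap L^\infty$, as in Theorem \ref{thm:lwp}. The forcing is $\nabla(\mathcal{K}_n-\mathcal{K}_{n-1})\cdot\nabla\sigma_n$, where $\nabla_z(\mathcal{K}_n-\mathcal{K}_{n-1})$ is bounded and $\nabla_w(\mathcal{K}_n-\mathcal{K}_{n-1})=f(s)(E_n-E_{n-1})(s,q)$ plus a term with factor $(s+1)f(s)F(s)$. The first piece carries the singular weight $f$. To make it integrable, I would use that $E_n-E_{n-1}$ vanishes at $s=-1$ with rate $(1+s)^{1/2}$, obtained from the same difference argument. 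This yields a factor $\int_{-1}^{T}(1+s)^{-1/2}ds$, which is small for $T$ close to $-1$, and hence a contraction on $[-1,T(c_0)]$.

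Uniqueness follows from the same difference estimate. The regularity claims $\nabla\sigma\in C^0$ and $(1+s)\pr_s\sigma\in C^0$ follow from the equation, since $(1+s)|\nabla\mathcal{K}|$ is bounded. When $c_0$ is small, all the bootstrap constants close with $c_0$-powers rather than with the smallness of the interval, as in Proposition \ref{prop:bootstrap}, so we may take $T(c_0)=0$.
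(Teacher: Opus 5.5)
There is a genuine gap, located in the step you yourself flag as the main obstacle, and it starts from a misdiagnosis of where the singularity comes from. Both $\nabla_w\mathcal{K}$ and $\nabla_w^2\mathcal{K}$ compare fields at two \emph{different} points, $q$ and $w$, with $q-w=(s+1)z+\lambda(s+1)F(s)E_{-1}(w)$. So even if $E_n(s)\equiv E_{-1}$, you get for $(s+1)|z|\ll 1$
\[
f(s)\bigl[E_{-1}(q)-E_{-1}(w)\bigr]\approx \tfrac12\nabla E_{-1}(w)\,z,\qquad f(s)\bigl[\nabla E_{-1}(q)-\nabla E_{-1}(w)\bigr]\approx \tfrac12\nabla^2 E_{-1}(w)\,z .
\]
Thus $|\nabla_w\mathcal K|$ and $|\nabla_w^2\mathcal K|$ are of size $c_0^2\min\{|z|,(1+s)^{-1}\}$; this is exactly how \eqref{eq:estimate-dK} and the bound on $\nabla^2_{w,w}\mathcal K$ in the proof of Lemma \ref{lemma:bound-K} arise. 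Consequently $\|\nabla_w\mathcal K(s)\|_{L^\infty_{w,z}}$ is in general of order $(1+s)^{-1}$, not $|\ln(1+s)|$. Likewise $\nabla E_n(s,q)-\nabla E_{-1}(w)$ is not $O((1+s)^\delta)$ uniformly in $z$, whatever time-H\"older regularity of $\nabla E$ you prove.

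Several of your steps survive this. Moment propagation only needs $\langle z\rangle^{-1}\nabla_w\mathcal K$. Testing the continuity equation against functions of $w$ alone only involves the bounded $\nabla_z\mathcal K$. The contraction step is also fine, since $E_{-1}(w)$ cancels in $\nabla_w(\mathcal K_n-\mathcal K_{n-1})$. What breaks is the derivative bootstrap.

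The $z$-growth of $\nabla_w^2\mathcal K$ must be absorbed by a weight on $\nabla_z\sigma$, because it enters $\mathfrak L(\nabla_w\sigma)$ through $\nabla_z\sigma\cdot\nabla_w^2\mathcal K$. However, the weights $\langle z\rangle^m$ do not close, because of the reverse coupling through $\nabla_z^2\mathcal K=-\lambda\frac{1+s}{1-s}\nabla E(s,q)$:
\begin{itemize}
\item $\mathfrak L(\langle z\rangle\nabla_z\nabla_w\sigma)$ contains $\langle z\rangle\nabla_z^2\mathcal K\cdot\nabla_w^2\sigma$;
\item $\mathfrak L(\langle z\rangle^2\nabla_z^2\sigma)$ contains $\langle z\rangle\nabla_z^2\mathcal K\cdot(\langle z\rangle\nabla_w\nabla_z\sigma)$.
\end{itemize}
The coefficient $\langle z\rangle\nabla_z^2\mathcal K$ is only bounded by $c_0^2(1+s)\langle z\rangle$, which is unbounded for $|z|\gg(1+s)^{-1}$. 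At top order no interpolation supplies the missing power of $\langle z\rangle$. (At first order, the analogous term $\langle z\rangle\nabla_z^2\mathcal K\cdot\nabla_w\sigma$ could still be rescued by interpolating $\langle z\rangle^2\nabla_w\sigma$ between $\langle z\rangle^4\sigma$ and $\nabla_w^2\sigma$.)

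The missing idea is the paper's time-dependent weight
\[
\theta(s,z)=\frac{\langle z\rangle}{1+(s+1)\langle z\rangle}\approx\min\{\langle z\rangle,(1+s)^{-1}\}.
\]
It has three properties that make it work:
\begin{itemize}
\item It equals $\langle z\rangle$ at $s=-1$, which matches \eqref{eq:bound-sigma0}.
\item It satisfies $\partial_s\theta=-\theta^2\le 0$, so it only contributes a damping term, with $\mathfrak L(\ln\theta)\lesssim c_0^2\langle F(s)\rangle^3$.
\item It is balanced exactly so that $\theta^{-1}\nabla_w^2\mathcal K$ and $\theta\nabla_z^2\mathcal K$ are bounded, and at third order so are $\theta^{-1}\nabla^3_{w,w,w}\mathcal K$, $\theta\nabla^3_{w,z,z}\mathcal K$ and $\theta^2\nabla^3_{z,z,z}\mathcal K$. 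All are bounded by $c_0^2\langle F(s)\rangle^{5}$, which is integrable on $[-1,0]$.
\end{itemize}
Bootstrapping $\nabla_w\sigma$, $\theta\nabla_z\sigma$, $\nabla^2_{w,w}\sigma$, $\theta\nabla^2_{w,z}\sigma$, $\theta^2\nabla^2_{z,z}\sigma$ as in \eqref{eq:bootstrap-assumptions-sigma} then gives a closed Gr\"onwall system via \eqref{eq:commutation-relation-theta} and \eqref{eq:commutative-relation-d^2sigma}. Since $\theta\ge\frac12$ for $s\le 0$, this also yields the unweighted bound on $\nabla_z\sigma$ in \eqref{eq:sigma-c0}.
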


The solution will be constructed by  Picard iteration. To do this, we need to obtain a priori estimates of the particle distribution functions and the vector fields. We make the following bootstrap assumptions on $\sigma$: given $c_0$ as in Theorem \ref{thm:wave-operator}, we assume that for $-1\leq s\leq T(c_0)$, there holds that
\begin{equation}\label{eq:bootstrap-assumptions-sigma}
\begin{aligned}
&\|\sigma(s)\|_{L_{w, z}^2}+\left\|\langle z\rangle^4 \sigma(s)\right\|_{L_{w, z}^{\infty}} \leq A_1 \leq 4 c_0, \\
&\left\|\nabla_w \sigma(s)\right\|_{ L_{w, z}^{\infty}}+\left\|\theta \nabla_z \sigma(s)\right\|_{L_{w, z}^{\infty}} \leq A_2 \leq 4 c_0, \\
&\left\|\nabla_{w, w}^2 \sigma(s)\right\|_{L_{w, z}^{\infty}}+\left\|\theta \nabla_{w, z}^2 \sigma(s)\right\|_{L_{w, z}^{\infty}}+\left\|\theta^2 \nabla_{z, z}^2 \sigma(s)\right\|_{L_{w, z}^{\infty}} \leq A_3 \leq 4 c_0,
\end{aligned}
\end{equation}
where $\theta$ is defined by
\begin{equation*}
	\begin{split}
	\theta(s,z):= \frac{\langle z\rangle}{1+(s+1)\langle z\rangle},	\quad  \frac{1}{2}\min\{\langle z\rangle,(s+1)^{-1}\}\leq \theta(s,z)\leq \min\{ \langle z\rangle,(1+s)^{-1} \},
	\end{split}
\end{equation*}
which satisfies the following differential equations:
$$
\partial_s \theta=-\theta^2 \quad \text { and } \quad \nabla_z \theta=\left(\frac{z}{\langle z\rangle^3}\right) \cdot \theta^2 .
$$
The organization of this section is as follows. In Section \ref{subs:comm-relationship}, we compute the commutative relations that will be needed to propagate the regularity. In Section \ref{subs:bootstrap}, we prove the a priori estimates \eqref{eq:bootstrap-assumptions-sigma} by a bootstrap argument. In Section \ref{subs:contruct-local-solutions}, we construct the local solution by Picard iteration.

\subsection{Commutative relations}\label{subs:comm-relationship}

Writing $\mathfrak{L}=\partial_s +\{\cdot,\mathcal{K}\}$, for the moments in $w$, $z$, we have the commutative relations 
\begin{align}\label{eq:evolution-of-weighted-sigma}
\mathfrak{L}[\langle z\rangle^k\sigma]=-k\sigma \langle z\rangle^{k-2}z\cdot \nabla_w\KK,\quad \mathfrak{L}[\langle w\rangle^k\sigma]=k\sigma\langle w\rangle^{k-2}w\cdot\nabla_z\KK.
\end{align}
For the derivatives, we have
\[
\mathfrak{L}\binom{\nabla_w \sigma}{\nabla_z \sigma}=\left(\begin{array}{cc}
-\nabla_w \nabla_z \mathcal{K} & \nabla_w^2 \mathcal{K} \\
-\nabla_z^2 \mathcal{K} & \nabla_w \nabla_z \mathcal{K}
\end{array}\right)\binom{\nabla_w \sigma}{\nabla_z \sigma}
\]
with
\begin{equation}\label{eq:expression-d^2K}
\begin{aligned}
\nabla_{w^j w^k}^2 \mathcal{K}= & -\lambda f(s) \partial_j\left[E_k(s, q)-E_{-1, k}(w)\right] \\
& -\lambda^2 F(s)(1-s)^{-1}\left\{\partial_j E(s, q) \cdot \partial_k E_{-1}(w)\right. \\
& \left.+\partial_k E(s, q) \cdot \partial_j E_{-1}(w)+\partial_j \partial_k E_{-1}(w) \cdot E(s, q)\right\} \\
& -\lambda^3 \frac{1+s}{1-s} F(s)^2 \partial_k E_{-1, a}(w) \partial_j E_{-1, b}(w) \cdot \partial_a E_b(s, q), \\
\nabla_w \nabla_z \mathcal{K}= & -\frac{\lambda}{1-s} \nabla E(s, q)-\lambda^2 \frac{1+s}{1-s} F(s)(\nabla E(s, q) \cdot \nabla) E_{-1}(w), \\
\nabla_z^2 \mathcal{K}=- & \lambda \frac{1+s}{1-s} \nabla E(s, q) .
\end{aligned}
\end{equation}
Observe that the first term in $\nabla_{w,w}^2\KK$ is not  integrable near $s=-1$ due to \eqref{eq:boundE}. This effect will be controlled by introducing the weight $\theta$.   Lastly, we compute the commutative relations of the second order derivative of $\sigma$.

\begin{equation}\label{eq:commutative-relation-d^2sigma}
\begin{split}
\mathfrak{L}(\partial_{w^j}\partial_{w^k}\sigma)&=\theta^{-1}\nabla_w\partial_{w^j}\mathcal{K}\cdot (\theta\nabla_z\partial_{w^k}\sigma)+\theta^{-1}\nabla_w\partial_{w^k}\mathcal{K}\cdot(\theta\nabla_z\partial_{w^j}\sigma)-\nabla_z\partial_{w^j}\mathcal{K}\cdot\nabla_w\partial_{w^k}\sigma\\
&\quad-\nabla_z\partial_{w^k}\mathcal{K}\cdot\nabla_w\partial_{w^j}\sigma+\theta^{-1}\nabla_w\partial_{w^j}\partial_{w^k}\mathcal{K}\cdot(\theta\nabla_z\sigma)-\nabla_z\partial_{w^j}\partial_{w^k}\mathcal{K}\cdot\nabla_w\sigma,\\
\mathfrak{L}(\theta\partial_{z^j}\partial_{w^k}\sigma)&=\mathfrak{L}(\ln\theta)\cdot\theta\partial_{z^j}\partial_{w^k}\sigma+\theta^{-1}\nabla_w\partial_{w^k}\mathcal{K}\cdot(\theta^2\nabla_z\partial_{z^j}\sigma)-\nabla_z\partial_{w^k}\mathcal{K}\cdot(\theta\nabla_w\partial_{z^j}\sigma)\\
&\quad+\nabla_w\partial_{z^j}\mathcal{K}\cdot (\theta\nabla_z\partial_{w^k}\sigma)-(\theta\nabla_z\partial_{z^j}\mathcal{K})\cdot\nabla_w\partial_{w^k}\sigma\\
&\quad+\nabla_w\partial_{z^j}\partial_{w^k}\mathcal{K}\cdot(\theta\nabla_z\sigma)-\theta\nabla_z\partial_{z^j}\partial_{w^k}\mathcal{K}\cdot\nabla_w\sigma,\\
\mathfrak{L}(\theta^2\partial_{z^j}\partial_{z^k}\sigma)&=2\mathfrak{L}(\ln\theta)\cdot\theta^2\partial_{z^j}\partial_{z^k}\sigma+\nabla_w\partial_{z^k}\mathcal{K}\cdot(\theta^2\nabla_z\partial_{z^j}\sigma)-\theta\nabla_z\partial_{z^k}\mathcal{K}\cdot(\theta\nabla_w\partial_{z^j}\sigma)\\
&\quad+\nabla_w\partial_{z^j}\mathcal{K}\cdot (\theta^2\nabla_z\partial_{z^k}\sigma)-(\theta\nabla_z\partial_{z^j}\mathcal{K})\cdot (\theta\nabla_w\partial_{z^k}\sigma)\\
&\quad+\theta\nabla_w\partial_{z^j}\partial_{z^k}\mathcal{K}\cdot(\theta\nabla_z\sigma)-\theta^2\nabla_z\partial_{z^j}\partial_{z^k}\mathcal{K}\cdot\nabla_w\sigma,
\end{split}
\end{equation}

\subsection{Closing the bootstrap}\label{subs:bootstrap} We first provide an outline of how to close the bootstrap.  To avoid the singularity at $s=1$, we will always assume $T\leq 0$. Lemma \ref{lemma:bound-K} shows that the bootstrap assumption \eqref{eq:bootstrap-assumptions-sigma} implies bounds on the vector fields:
\begin{equation}\label{eq:estimate-dK}
\begin{split}
	|\nabla_z\KK(w,z)|&\apprle 2c_0^2,\\
	|\nabla_w\KK(w,z)|&\apprle c_0^2\{\min\{(1+s)^{-1},|z|\}+ \langle F(s)\rangle^3\},
\end{split}
\end{equation}
and for the second order derivative
\begin{equation}\label{eq:estimate-d^2K}
\left\|\nabla_w \nabla_z \mathcal{K}\right\|_{L_{w, z}^{\infty}}+\left\|\theta \nabla_z \nabla_z \mathcal{K}\right\|_{L_{w, z}^{\infty}}+\left\|\theta^{-1} \nabla_w \nabla_w \mathcal{K}\right\|_{L_{w, z}^{\infty}} \leq c_0^2 \action{F(s)}^4 ,
\end{equation}
and for the third order derivative
\begin{equation}\label{eq:estimate-d^3K}
\begin{split}
\Vert \theta^{-1}\nabla^3_{w,w,w}\mathcal{K}\Vert_{L^\infty_{w,z}}+\Vert \nabla^3_{w,w,z}\mathcal{K}\Vert_{L^\infty_{w,z}}+\Vert \theta\nabla^3_{w,z,z}\mathcal{K}\Vert_{L^\infty_{w,z}}+\Vert \theta^2\nabla^3_{z,z,z}\mathcal{K}\Vert_{L^\infty_{w,z}}&\le c_0^2\langle F(s)\rangle^5.
\end{split}
\end{equation}
Then, Lemma \ref{lem:particle-density-bootstrap} shows that these estimates help to improve the bootstrap assumption \eqref{eq:bootstrap-assumptions-sigma}.

\begin{lemma}\label{lemma:bound-K}
	Let $\sigma\in C^0([-1,T],L^2_{w,z})$ with $\sigma(-1)=\sigma_{-1}$ such that 
\begin{equation}\label{eq:continuity-gamma}
	\begin{split}
\pr_s\rho+\div_q {\bf{j}}=0 \quad \mathrm{in} \, [-1,T]\times\R^3,		
	\end{split}
\end{equation}	
where
$$
\rho(s, q)=\int_{\mathbb{R}^3} \gamma^2(s, q, p) \myd{p} \quad \text { and } \quad \mathbf{j}(s, q)=\int_{\mathbb{R}^3} p \gamma^2(s, q, p) \myd{p}
$$
Suppose that $E_{-1}$ satisfies \eqref{eq:bound-E0}. Then there exists $T^*\left(c_0\right) \in(0, T]$ such that
\begin{enumerate}[\rm (i)\,]
	\item Assume the first line of \eqref{eq:bootstrap-assumptions-sigma} holds. Then \eqref{eq:estimate-dK} holds for $-1<s<T^*$.
	\item Assume the last two lines of \eqref{eq:bootstrap-assumptions-sigma} hold. Then \eqref{eq:estimate-d^2K} holds for $-1<s<T^*$.
	\item Assume all the three lines of \eqref{eq:bootstrap-assumptions-sigma} hold. Then \eqref{eq:estimate-d^3K} holds for $-1<s<T^*$.
\end{enumerate}
\end{lemma}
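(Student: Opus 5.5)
We will derive all three statements from the explicit formulas \eqref{eq:expression-dK} and \eqref{eq:expression-d^2K}, together with the Littlewood--Paley type decompositions \eqref{eq:electric-field-rep}--\eqref{eq:electric-field-rep-2}. The only genuinely new input is a bound on $E(s,q)-E_{-1}(w)$ that compensates the nonintegrable factor $f(s)\sim (1+s)^{-1}$ near $s=-1$.

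\textbf{Step 1: bounds on $E$ and $\nabla E$ in the physical variables.} Since the change of variables \eqref{eq:change-of-variable-sigma} preserves volume, $\norm{\gamma}{L^2_{q,p}}=\norm{\sigma}{L^2_{w,z}}$. Also $p=z+\lambda F(s)E_{-1}(w)$ with $|F(s)E_{-1}|\apprle c_0^2\action{F(s)}$, so $\norm{\action{p}^2\gamma}{L^\infty}\apprle A_1\action{F(s)}^2$. Lemma \ref{lem:estimate-E} with $A=1$ then gives $|E|\apprle c_0^2\action{F(s)}^4$.

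This crude bound is not yet uniform near $s=-1$. To obtain $|\nabla_z\KK|=|E|/(1-s)\apprle c_0^2$, I would instead expand $\gamma^2\,dq\,dp=\sigma^2\,dw\,dz$ in the representation of $E$ and use the first line of \eqref{eq:bootstrap-assumptions-sigma} directly in $(w,z)$ variables. For $\nabla E$ and $\nabla^2E$ I would apply \eqref{eq:bound-dE}, with $\nabla_q\gamma$ expressed through the Jacobian as $\nabla_w\sigma-\lambda F\nabla E_{-1}\nabla_z\sigma$. This yields $\norm{\nabla E}{L^\infty}\apprle c_0^2\action{F(s)}^4$, with the $\theta$-weights absorbing $|\nabla_z\sigma|$ since $\theta\gtrsim 1$.

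\textbf{Step 2: the key difference $E(s,q)-E_{-1}(w)$.} Here $q=w+(s+1)z+O((1+s)F)$. I would split
\[
E(s,q)-E_{-1}(w)=\bigl[E(s,q)-E(s,w)\bigr]+\bigl[E(s,w)-E_{-1}(w)\bigr].
\]
The first bracket is at most $\norm{\nabla E}{L^\infty}\,|q-w|\apprle c_0^2\action{F}^4\min\{(1+s)\action{z},1\}$; the $\min$ comes from using $|E|$ bounded when $|q-w|$ is large. After multiplying by $f(s)$, this produces the $\min\{(1+s)^{-1},|z|\}$ term in \eqref{eq:estimate-dK}.

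The second bracket is a time difference of the field. I would control it via Proposition \ref{prop:LipboundE} (reversed in time, on $[-1,s]$), using the continuity equation \eqref{eq:continuity-gamma} and the force $G=\lambda f(s)E$. This gives a bound of size $(1+s)|\ln(1+s)|\,c_0^2\action{F}^4$, so $f\cdot[\cdots]\apprle c_0^2\action{F}^5$. The other terms of $\nabla_w\KK$ in \eqref{eq:expression-dK} carry the factor $(s+1)f(s)F(s)\apprle \action{F}$ and are harmless. This proves (i); $T^*$ is chosen so that $1-s\ge 1/2$.

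\textbf{Steps 3--4: higher derivatives.} For (ii), I would bound each term of \eqref{eq:expression-d^2K}. The terms $\nabla_w\nabla_z\KK$ and $\theta\nabla_z^2\KK$ follow at once from $\norm{\nabla E}{L^\infty}$, using $\theta(1+s)\le1$. The delicate term is $\theta^{-1}f(s)\,\partial_j[E(s,q)-E_{-1}(w)]$. I would treat it exactly as in Step 2, one derivative higher. That is, I would write $\nabla E(s,q)-\nabla E_{-1}(w)$ as a spatial increment, bounded by $\norm{\nabla^2E}{L^\infty}\min\{(1+s)\action{z},1\}\apprle \theta^{-1}\cdot\theta(1+s)\action{z}$, plus a time increment of $\nabla E$. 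Since $\theta(1+s)\action z\approx \min\{(1+s)\action z,1\}$, we have $f(s)\min\{(1+s)\action z,1\}\apprle \theta$, and so the term is $\apprle\theta$ as required. The $\nabla^2E$ bound uses the second-derivative line of \eqref{eq:bootstrap-assumptions-sigma} via a version of \eqref{eq:bound-dE} with one more derivative, and $\norm{E_{-1}}{W^{3,\infty}}\le c_0^2$.

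Part (iii) repeats the same scheme, differentiating \eqref{eq:expression-d^2K} once more with the third-derivative weights.

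\textbf{Main obstacle.} I expect the main obstacle to be the time increment $\nabla E(s)-\nabla E_{-1}$ near $s=-1$. Proposition \ref{prop:LipboundE} only controls $E$, so an analogue for $\nabla E$ is needed. I would try to redo the proof of Proposition \ref{prop:LipboundE} with $\nabla^2\chi$ in place of $\nabla\chi$, paying one factor $R^{-1}$. The small-$R$ region would be compensated using $\nabla_q\gamma$ bounds, integrated by parts onto $\gamma^2$ as in the second estimate of Lemma \ref{lem:estimate-E_RV}. This should lose only a logarithm, which is absorbed into $\action{F(s)}$.
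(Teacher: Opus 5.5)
\textbf{There is a genuine gap, and it comes from the order of your splitting.} You write $E(s,q)-E_{-1}(w)=[E(s,q)-E(s,w)]+[E(s,w)-E_{-1}(w)]$. This puts the spatial increment on the time-dependent field $E(s,\cdot)$, so at derivative level $k$ you need a bound on $\norm{\nabla^{k+1}E(s)}{L^\infty_q}$.

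For (iii), the key term of $\theta^{-1}\nabla^3_{w,w,w}\KK$ is $\theta^{-1}f(s)[\nabla^2E(s,q)-\nabla^2E_{-1}(w)]$. ``Repeating the same scheme'' then requires $\norm{\nabla^3E(s)}{L^\infty_q}$. But $\nabla^3E$ is a Calder\'on--Zygmund (Riesz-transform) image of $\nabla^2\rho$. The bootstrap \eqref{eq:bootstrap-assumptions-sigma} controls at most two derivatives of $\sigma$, so at best $\nabla^2\rho\in L^\infty$. Riesz transforms are not bounded on $L^\infty$, so this quantity is simply not available. (One could try to rescue your splitting with a log-Lipschitz bound on $\nabla^2E(s)$, but that is a separate argument you have not given.)

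The same ordering causes smaller problems earlier.
In (i) you use $\norm{\nabla E(s)}{L^\infty}$, which needs bounds on $\nabla_q\gamma$; these are not among the hypotheses of (i).
The $\min\{(1+s)^{-1},|z|\}$ term in $\nabla_w\KK$ then comes with coefficient $c_0^2\action{F(s)}^5$ rather than $c_0^2$, so you do not obtain \eqref{eq:estimate-dK} as stated.

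\textbf{The fix, which is what the paper does, is to split the other way:}
\[
\nabla^kE(s,q)-\nabla^kE_{-1}(w)=\bigl[\nabla^kE(s,q)-\nabla^kE_{-1}(q)\bigr]+\bigl[\nabla^kE_{-1}(q)-\nabla^kE_{-1}(w)\bigr],\qquad k=0,1,2 .
\]

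The second bracket involves only the limiting field. Using $\norm{E_{-1}}{W^{3,\infty}}\le c_0^2$ and $|q-w|\le (1+s)(|z|+c_0^2|F(s)|)$, it is $\lesssim c_0^2\min\{1,(1+s)(|z|+c_0^2|F(s)|)\}$. After multiplying by $f(s)$ this is at most $c_0^2\min\{(1+s)^{-1},|z|\}+c_0^4|F(s)|\lesssim c_0^2\,\theta\action{F(s)}$. This is where the $W^{3,\infty}$ hypothesis enters, and it is why no derivative of $E(s)$ beyond order $k$ is ever needed.

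The first bracket is a pure time increment at a fixed point $q$. It is controlled by Lemma \ref{lemma:Lip-E}, whose part (ii) is essentially the $\nabla E$ analogue you sketched as your ``main obstacle''. That lemma uses only the continuity equation \eqref{eq:continuity-gamma} and bounds on $\nabla_q^k\mathbf{j}$ for $k\le 2$. Hence it needs at most two derivatives of $\sigma$, together with $\norm{\action{z}^{1.6}\nabla_{w,z}\sigma}{L^\infty}\lesssim A_1+A_3$ from Proposition \ref{prop:interpolation}. It is applied on dyadic intervals $[2^{-j-1}-1,2^{-j}-1]$ and summed, giving a bound $\lesssim c_0^2(1+s)\action{F(s)}^N$.

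This route also removes your reliance on Proposition \ref{prop:LipboundE}, which has two drawbacks here:
\begin{enumerate}
\item It requires a kinetic equation \eqref{eq:gamma-divergence-form} with a controlled force, not just the continuity equation assumed in the lemma.
\item It cannot be applied on all of $[-1,s]$ at once. Both $F(s')$ and the available bound $\norm{\action{p}^2\gamma(s')}{L^\infty}\lesssim c_0\action{F(s')}^2$ blow up as $s'\to-1$, so the dyadic summation is unavoidable.
\end{enumerate}
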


The following lemma will help us to prove the bootstrap improvement.
\begin{lemma}\label{lemma:Lip-E}
	Under the assumptions of Lemma \ref{lemma:bound-K}, we have the following esitimates on $E=E[\gamma]$ and its derivatives for fixed $-1<s_0<s_1<T^*$:
\begin{enumerate}[\rm (i)\,]
	\item 
\begin{equation*}
\begin{aligned}
\norm{E(s_1)-E(s_0)}{L^\infty_q} &\apprle \left\langle\ln \left(s_1-s_0\right)\right\rangle\left(s_1-s_0\right)\|\mathbf{j}\|_{L_{s, q}^{\infty}} \\
& +\left(s_1-s_0\right)^2\left[\left\|\langle p\rangle^2 \gamma\right\|_{L_{s, q, p}^{\infty}}^2+\|\gamma\|_{L_s^{\infty} L_{q, p}^2}^2\right] .
\end{aligned}
\end{equation*}
	\item
\begin{equation*}
\begin{aligned}
\left\|\nabla_q E\left(s_1\right)-\nabla_q E\left(s_0\right)\right\|_{L_q^{\infty}} \apprle & \left\langle\ln \left(s_1-s_0\right)\right\rangle\left(s_1-s_0\right)\left\|\nabla_q \mathbf{j}\right\|_{L_{s, q}^{\infty}} \\
& +\left(s_1-s_0\right)^2\left[\left\|\langle p\rangle^3 \gamma\right\|_{L_{s, q, p}^{\infty}}^2\right. \\
& \left.+\left\|\nabla_q \gamma\right\|_{L_{s, q, p}^{\infty}}^2+\|\gamma\|_{L_s^{\infty} L_{q, p}^2}^2\right].
\end{aligned}
\end{equation*}

\item 

\begin{equation*}
\begin{aligned}
\left\|\nabla_q^2 E\left(s_1\right)-\nabla_q^2 E\left(s_0\right)\right\|_{L_q^{\infty}} \apprle & \left\langle\ln \left(s_1-s_0\right)\right\rangle\left(s_1-s_0\right)\left\|\nabla_q^2 \mathbf{j}\right\|_{L_{s, q}^{\infty}} \\
& +\left(s_1-s_0\right)^2\left[\|\gamma\|_{L_s^{\infty} L_{q, p}^2}^2+\left\|\langle p\rangle^3 \gamma\right\|_{L_{s, q, p}^{\infty}}\left\|\nabla_q^2 \gamma\right\|_{L_{s, q, p}^{\infty}}\right. \\
& \left.+\left\|\langle p\rangle^{2} \nabla_q \gamma\right\|_{L_{s, q, p}^{\infty}}^2\right] .
\end{aligned}
\end{equation*}

\end{enumerate}
\end{lemma}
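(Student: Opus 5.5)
The plan is to repeat the proof of Proposition \ref{prop:LipboundE}, with two changes. We use only the continuity equation \eqref{eq:continuity-gamma} rather than the full kinetic equation, so no velocity cutoff $\chi_{\{B\le\cdot\le B^{-1}\}}$ is needed and no force term $G$ appears. For (ii) and (iii) we differentiate in $q$ before testing.

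We write $E=\int_0^\infty E_R\,\frac{\mathrm{d}R}{R^2}$ as in \eqref{eq:electric-field-rep}. Here $E_R(s,q)=c_1\int (\nabla\chi)(R^{-1}(q-y))\rho(s,y)\,\mathrm{d}y$, and we fix a scale $A\in(0,1)$. Large scales are handled directly. By Lemma \ref{lem:estimate-E_RV}, $|E_R|\lesssim\|\gamma\|_{L^2_{q,p}}^2$, so
\[\int_{A^{-1}}^\infty|E_R(s_i)|\frac{\mathrm{d}R}{R^2}\lesssim A\|\gamma\|_{L^\infty_sL^2_{q,p}}^2 .\]
Small scales are handled directly too. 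Since $\|\rho\|_{L^\infty_q}\lesssim\|\langle p\rangle^2\gamma\|^2_{L^\infty_{q,p}}$ (by integrating $\langle p\rangle^{-4}$ in $p\in\mathbb{R}^2$), we get $|E_R|\lesssim R^2\|\rho\|_{L^\infty}$, and hence
\[\int_0^{A}|E_R(s_i)|\frac{\mathrm{d}R}{R^2}\lesssim A\|\langle p\rangle^2\gamma\|^2_{L^\infty_{s,q,p}} .\]
For the intermediate scales $A\le R\le A^{-1}$, we test \eqref{eq:continuity-gamma} against $(\nabla\chi)(R^{-1}(q-\cdot))$ and integrate in $s$ over $[s_0,s_1]$. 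This gives
\[E_R(s_1)-E_R(s_0)=c_1\int_{s_0}^{s_1}\!\!\int R^{-1}(\nabla\partial_k\chi)(R^{-1}(q-y))\,\mathbf{j}^k(s,y)\,\mathrm{d}y\,\mathrm{d}s .\]
Because $\nabla\partial_k\chi$ is supported in $B_2$, the right side is bounded by $(s_1-s_0)R\|\mathbf{j}\|_{L^\infty_{s,q}}$. Integrating against $\mathrm{d}R/R^2$ over $[A,A^{-1}]$ then gives $(s_1-s_0)\,|\ln A|\,\|\mathbf{j}\|_{L^\infty}$. We choose $A=(s_1-s_0)^2$ (capped at $1/2$ when $s_1-s_0$ is not small). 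The three pieces give $\langle\ln(s_1-s_0)\rangle(s_1-s_0)\|\mathbf{j}\|$ plus $(s_1-s_0)^2[\cdots]$, which is exactly (i).

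For (ii) and (iii) we apply $\nabla_q$ or $\nabla_q^2$ to $E$ and move the derivatives onto the density. Concretely, $\nabla_q E_R=c_1\int(\nabla\chi)(R^{-1}(q-y))\nabla_y\rho\,\mathrm{d}y$, and differentiating \eqref{eq:continuity-gamma} in $q$ gives $\partial_s\nabla^k\rho+\mathrm{div}\,\nabla^k\mathbf{j}=0$. The intermediate-scale argument then goes through verbatim, with $\mathbf{j}$ replaced by $\nabla_q\mathbf{j}$ or $\nabla_q^2\mathbf{j}$. The tails need three estimates.
\begin{itemize}
\item For large $R$ we keep the derivatives on the kernel. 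Each derivative gains a factor $R^{-1}$, so the $L^2$ bound yields at least $A\|\gamma\|^2_{L^2}$.
\item For small $R$ with one derivative, we use $|\nabla\rho|\lesssim\|\langle p\rangle^3\gamma\|_{L^\infty}\|\nabla_q\gamma\|_{L^\infty}\le\|\langle p\rangle^3\gamma\|^2+\|\nabla_q\gamma\|^2$. This again holds because $\int\langle p\rangle^{-3}\,\mathrm{d}p<\infty$ in two dimensions.
\item For small $R$ with two derivatives, we use $|\nabla^2\rho|\lesssim\|\langle p\rangle^3\gamma\|\|\nabla^2_q\gamma\|+\|\langle p\rangle^{2}\nabla_q\gamma\|^2$. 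Here the product $\nabla\gamma\,\nabla\gamma$ needs $\langle p\rangle^{-4}$ integrability, which is why $\langle p\rangle^2$ is placed on each factor.
\end{itemize}
Each tail is multiplied by $A$, and with the same choice $A=(s_1-s_0)^2$ we obtain the stated right-hand sides.

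The main obstacle is justifying the weak formulation. The continuity equation for $\gamma^2$, and its $q$-derivatives, must hold in a form that can be tested against the compactly supported kernels uniformly in $q$. This should follow from the regularity $\sigma\in C^0L^2$ and the bounds on $\nabla\sigma$ obtained via the volume-preserving change of variables \eqref{eq:change-of-variable-sigma}, combined with mollification. Beyond that, the only care needed is the bookkeeping of the $p$-weights in two dimensions.
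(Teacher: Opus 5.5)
Your proposal is correct and matches the paper's proof in essentially every step. The paper likewise decomposes $E=\int_0^\infty E_R\,\mathrm{d}R/R^2$ and uses $L^\infty$ bounds on $\nabla_q^k\rho$ at small scales and $L^2$ bounds with the derivatives kept on the kernel at large scales. On the intermediate window it tests the continuity equation \eqref{eq:continuity-gamma} against the rescaled $\nabla\chi$ to get $(s_1-s_0)R\|\nabla_q^k\mathbf{j}\|_{L^\infty}$, then takes $A=(s_1-s_0)^2$; its window $[A,A^{-1/2}]$ in (ii) is immaterial. Your concern about the weak formulation is unnecessary, since \eqref{eq:continuity-gamma} is a hypothesis of Lemma \ref{lemma:bound-K} and the derivatives can be moved onto the smooth test function distributionally.
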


\begin{proof}
Let us only prove (ii), since the proofs for (i) and (iii) are similar. Integrating \eqref{eq:continuity-gamma} over $[s_1,s_2]\times \R^2$ against the test function $R^{-1}\nabla^2\chi(R^{-1}(x-y))$, we have
\begin{equation*}
	\begin{aligned}
&\iint_{\mathbb{R}^2_q\times\mathbb{R}^2_p} R^{-1}[\gamma^2(s_2,q,p)-\gamma^2(s_1,q,p)](\nabla\chi^2(R^{-1}(x-q))\,\mathrm{d}q\,\mathrm{d}p\\
&+\int_{s_1}^{s_2}\int_{\mathbb{R}^2_q}\nabla^2\chi(R^{-1}(x-q))R^{-2} {\bf j}(q)\,\mathrm{d}q\,\mathrm{d}s=0		.
	\end{aligned}
\end{equation*}	
Using integration by parts, we have
\begin{equation*}
	\begin{split}
\nabla E_{R}(s_2)-\nabla E_{R}(s_1)=\int_{s_1}^{s_2}\int_{\mathbb{R}^2} R^{-1}\nabla^2\chi(R^{-1}(x-q))\nabla{\bf j}\,\mathrm{d}q\,\mathrm{d}s.		
	\end{split}
\end{equation*}
Hence,
\begin{equation*}
	\begin{split}
|\nabla E_{R}(s_2)-\nabla E_{R}(s_1)|\apprle R\norm{\nabla_q\bf{j}}	{L^\infty_{s,q}}(s_2-s_1)	.
	\end{split}
\end{equation*}
Using
\begin{equation*}
	\begin{split}
	|\nabla E_{R}|\apprle	\min\{{R^2\norm{\nabla_q\gamma}{L^\infty_{s,q,p}}\norm{\langle p\rangle^3\gamma}{L^\infty_{s,q,p}}, R^{-1}\norm{\gamma}{L^\infty_sL^2_{q,p}}^2  }\},
	\end{split}
\end{equation*}
we have
\begin{align*}
|\nabla E(s_2)-\nabla E(s_1)|&\leq\int_{0}^A	\norm{\nabla_q\gamma}{L^\infty_{s,q,p}}\norm{\langle p\rangle^3\gamma}{L^\infty_{q,p}}\mathrm{d}R+\int_{A^{-1/2}}^\infty\norm{\gamma}{L^\infty_sL^2_{q,p}}^2\frac{\mathrm{d}R}{R^3}\\
&\relphantom{=}+\int_{A}^{A^{-1/2}}(s_2-s_1)\norm{\nabla_q\bf{j}}{L^\infty_{s,q}} \frac{\mathrm{d}R}{R}.
\end{align*}
The proof follows by taking $A=(s_2-s_1)^2$.
\end{proof}

Now, we are ready to prove Lemma \ref{lemma:bound-K}.

\begin{proof}[Proof of Lemma \ref{lemma:bound-K}]
(i) Recall from \eqref{eq:change-of-variable-sigma} that
\begin{equation}\label{eq:pz-relation}
|p-z|\apprle	c_0^2	|F(s)|, \quad |q-w|\leq (s+1)|z|+|F(s)|c_0^2
\end{equation}
hold and the change of variable preserves volume. These imply that for any $\beta \geq 0$, we have
\begin{equation}\label{eq:bound-weight-sigma}
\begin{aligned}
& \|\gamma(s)\|_{L_{q, p}^r}=\|\sigma(s)\|_{L_{q, p}^r}, \\
& \left\||p|^\beta \gamma(s)\right\|_{L_{q, p}^r} \apprle c_0^{2 \beta}  |F(s)|^\beta\|\sigma(s)\|_{L_{w, z}^r}+\left\||z|^\beta \sigma(s)\right\|_{L_{w, z}^r}
\end{aligned}
\end{equation}
In addition, since $\frac{\partial z}{\partial p}=\mathrm{Id}+O\left(c_0^2(1+s)F(s)  \right)$ has bounded Jacobian, we see that
\begin{equation}\label{eq:bound-j}
	\|\mathbf{j}(s)\|_{L_q^{\infty}} \leq\left\|\int\left[|z|+c_0^2 |F(s)|\right] \sigma^2 d z\right\|_{L_w^{\infty}} \apprle\action{c_0}^2 \action{F(s)} \Vert\langle z\rangle^2 \sigma \|_{L_{w, z}^{\infty}}^2
\end{equation}
and using Lemma \ref{lemma:Lip-E} (i), \eqref{eq:bound-weight-sigma}, and \eqref{eq:bound-j}, we obtain that for $2^{-k-1}-1 \leq s_2 \leq s_1 \leq 2^{-k}-1$,
$$
\left\|E\left(s_2, q\right)-E\left(s_1, q\right)\right\|_{L_q^{\infty}} \apprle\left\langle c_0\right\rangle^2 2^{-k}\action{k}^2\left(\left\|\action{z}^2 \sigma\right\|_{L_{s, w, z}^{\infty}}^2+\|\sigma\|_{L_s^{\infty} L_{w, z}^2}^2\right).
$$
Therefore, $E\left(2^{-k}-1\right)$ is Cauchy in $L_q^{\infty}$ and hence it follows that
\begin{equation}\label{eq:E-E1-difference}
\left\|E(s, q)-E_{-1}(q)\right\|_{L_q^{\infty}} \apprle\left\langle c_0\right\rangle^2 (s+1) \action{F(s)}^2 \left(\left\| \action{z}^2 \sigma\right\|_{L_{s, w, z}^{\infty}}^2+\|\sigma\|_{L_s^{\infty} L_{w, z}^2}^2\right),
\end{equation}
so that by \eqref{eq:bootstrap-assumptions-sigma}, we have
$$
|E(s, q)| \apprle\left|E(s, q)-E_{-1}(q)\right|+\left|E_{-1}(q)\right| \apprle c_0^2,
$$
which gives the desired bound for $\nabla_z\KK$ by \eqref{eq:expression-dK}. 

To bound $\nabla_w\KK$, it follows from \eqref{eq:bound-E0}, \eqref{eq:bootstrap-assumptions-sigma}, \eqref{eq:E-E1-difference}, and the mean value theorem with \eqref{eq:pz-relation} that
\begin{equation*}
	\begin{split}
		|E(s,q)-E_{-1}(w)|&\leq |E(s,q)-E_{-1}(q)|+|E_{-1}(w)-E_{-1}(q)|\\
		&\apprle c_0^2(s+1)\langle F(s)\rangle^2+c_0^2\min\{1, (s+1)|z|+|F(s)|c_0^2  \}.
	\end{split}
\end{equation*}
Hence by \eqref{eq:expression-dK}, we obtain the desired bound for $\nabla_w\mathcal{K}$ as well. 

(ii) By the change of variables \eqref{eq:change-of-variable-sigma}, we have
\begin{equation*}
	\begin{split}
		\nabla_q\gamma=\nabla_w\sigma-\nabla_z\sigma\cdot\lambda F(s)\nabla E_{-1}(q-(s+1)p)
	\end{split}
\end{equation*}
so that 
\begin{equation*}
	\begin{split}
	\norm{\nabla_q\gamma}{L^r_{q,p}}\apprle \action{c_0}^2\langle F(s) \rangle	\norm{\nabla_{w,z}\sigma}{L^r_{q,p}}
	\end{split}
\end{equation*}
for $r\in \{2,\infty\}$ and
$$
\begin{aligned}
\left\|\nabla_q \mathbf{j}(s)\right\|_{L_q^{\infty}} & \apprle \action{c_0}^2\langle F(s)\rangle\left\|\int\left[|z|+c_0^2\langle F(s)\rangle\right]|\sigma| \cdot\left|\nabla_{w, z} \sigma\right| \,\mathrm{d} z\right\|_{L_w^{\infty}} \\
& \apprle \action{c_0}^4\langle F(s)\rangle^2\left[\left\|\langle z\rangle^4 \sigma\right\|_{L_{w, z}^{\infty}}^2+\left\|\nabla_{w, z} \sigma\right\|_{L_{w, z}^{\infty}}^2\right].
\end{aligned}
$$
For $2^{-k-1}-1 \leq s_2 \leq s_1 \leq 2^{-k}-1$, Lemma \ref{lemma:Lip-E} (ii)  and \eqref{eq:bound-weight-sigma} give 
$$
\begin{aligned}
\left\|\nabla_q E\left(s_2, q\right)-\nabla_q E\left(s_1, q\right)\right\|_{L_q^{\infty}} \apprle & \left\langle c_0\right\rangle^4 \action{k}^3 2^{-k} \cdot\left(\left\|\langle z\rangle^4 \sigma\right\|_{L_{s, w, z}^{\infty}}^2+\left\|\nabla_{w, z} \sigma\right\|_{L_{s, w, z}^{\infty}}^2\right) \\
& +c_0^{10} 2^{-\frac{3 k}{2}}\left[\left\|\langle z\rangle^4 \sigma\right\|_{L_{s, w, z}^{\infty}}^2+\|\sigma\|_{L_s^{\infty} L_{w, z}^2}^2\right].
\end{aligned}
$$
Applying similar arguments as in (i) and \eqref{eq:bootstrap-assumptions-sigma}, we obtain
$$
\begin{aligned}
\left\|\nabla_q E(s, q)-\nabla_q E_{-1}(q)\right\|_{L_q^{\infty}} \apprle & \left\langle c_0\right\rangle^4 (1+s)\langle F(s)\rangle^3 \cdot\left(\left\|\langle z\rangle^4 \sigma\right\|_{L_{s, w, z}^{\infty}}^2+\left\|\nabla_{w, z} \sigma\right\|_{L_{s, w, z}^{\infty}}^2\right) \\
& +c_0^{10} (1+s)^{\frac{3}{2}}\left[\left\|\langle z\rangle^4 \sigma\right\|_{L_{s, w, z}^{\infty}}^2+\|\sigma\|_{\left.L_s^{\infty} L_{w, z}^2\right]}^2\right] \\
 &\apprle c_0^2 (1+s)\langle F(s)\rangle^3 .
\end{aligned}
$$
Using the formulas in \eqref{eq:expression-d^2K}, we see that
\begin{equation*}
\begin{aligned}
 \theta\left|\nabla_{z, z}^2 \mathcal{K}\right| &\leq|\nabla E| \cdot (1+s) \min \left\{(1+s)^{-1},|z|\right\} \leq c_0^2 \\
\left|\nabla_{w, z}^2 \mathcal{K}\right| &\leq|\nabla E|\left(1+(s+1)\langle F(s)\rangle\left|\nabla E_{-1}\right|\right) \leq 2 c_0^2 ,
\end{aligned}
\end{equation*}
and
\begin{equation*}
\begin{aligned}
\left|\nabla_{w, w}^2 \mathcal{K}\right|  &\leq (1+s)^{-1}\left|\nabla E_{-1}(q)-\nabla E_{-1}(w)\right|+(1+s)^{-1}\left|\nabla E(s, q)-\nabla E_{-1}(w)\right| \\
&\relphantom{=} +\langle F (s)\rangle\left[2\left|\nabla E_{-1}\right|^2|\nabla E|+\left|\nabla^2 E_{-1}\right||E|\right]+s\langle  F(s)\rangle^2\left|\nabla E_{-1}\right|^2|\nabla E| \\
 &\apprle c_0^2 \min \left\{(1+s)^{-1},|z|\right\}+c_0^2\langle F(s)\rangle^4,
\end{aligned}
\end{equation*}
which imply \eqref{eq:estimate-d^2K}.

(iii) The proof is similar to (ii). Starting from
 \begin{equation*}
\begin{split}
\nabla_z\mathcal{K}=-\frac{\lambda}{1-s} E(q),\qquad \frac{\partial q^k}{\partial z^j}=(s+1)\delta_j^k,\qquad\frac{\partial q^k}{\partial w^j}=\delta_j^k-\lambda (s+1)F(s)\partial_j\partial_k\phi_0(w),
\end{split}
\end{equation*}
we deduce
\begin{align*}
\theta^2\vert \nabla^3_{z,z,z}\mathcal{K}\vert&\leq ((s+1)\theta)^2\vert\nabla^2E(q)\vert,\\
\theta\vert\nabla^3_{w,z,z}\mathcal{K}\vert&\le ((s+1)\theta)\cdot\left[1+(s+1)\langle F(s)\rangle\vert\nabla E_{-1}\vert\right]\cdot \vert\nabla^2E(q)\vert,\\
\vert\nabla^3_{w,w,z}\mathcal{K}\vert&\le\left[1+(s+1)\langle F(s)\rangle\vert\nabla E_{-1}\vert\right]^2\cdot \vert\nabla^2E(q)\vert\\
&\relphantom{=}+\left[1+(1+s)\langle F(s)\rangle\vert\nabla E_{-1}\vert\right]\cdot\left[1+(1+s)\langle F(s)\rangle\vert\nabla^2 E_{-1}\vert\right]\cdot \vert\nabla E(q)\vert,
\end{align*}
and finally, from \eqref{eq:expression-d^2K}, we obtain that
\begin{equation*}
\begin{split}
\theta^{-1}\vert\nabla^3_{w,w,w}\mathcal{K}\vert&\le \left[(1+s)^{-1}+\langle F(s)\rangle\cdot \vert\nabla E_{-1}\vert\right]\cdot \vert\nabla^2E(s,q)-\nabla^2E_{-1}(w)\vert\\
&\quad+\langle F(s)\rangle\cdot \left[\vert\nabla^2E\vert\cdot\vert\nabla E_{-1}\vert+\vert\nabla E\vert\cdot \vert\nabla^2E_{-1}\vert+\vert\nabla^3E_{-1}\vert\cdot\vert E\vert\right]\\
&\quad+(1+s)\langle F(s)\rangle^2\cdot\left[\vert\nabla^2 E\vert\cdot\vert\nabla E_{-1}\vert^2+\vert\nabla E\vert \cdot\vert\nabla E_{-1}\vert\cdot\vert\nabla^2E_{-1}\vert\right]\\
&\quad+(1+s)^2\langle F(s)\rangle^3\cdot\left[\vert \nabla^2 E\vert\cdot\vert\nabla E_{-1}\vert^3\right].
\end{split}
\end{equation*}

On the other hand, we note that
\begin{equation*}
\begin{split}
\Vert\nabla^2{\bf j}(s)\Vert_{L^\infty_q} &\apprle \action{c_0}^2\langle  F(s)\rangle^2\norm*{\int\left[\vert z\vert+c_0^2\langle F(s)\rangle\right]\cdot\left[\vert \sigma\vert \cdot\vert\nabla^2_{w,z}\sigma\vert+\vert\nabla_{w,z}\sigma\vert^2\right] dz}{L^\infty_w}\\
&\apprle \action{c_0}^4\langle F(s)\rangle^3\left[\Vert \langle z\rangle^4\sigma\Vert_{L^\infty_{w,z}}\Vert\nabla^2_{w,z}\sigma\Vert_{L^\infty_{w,z}}+\Vert\langle z\rangle^{1.6}\nabla_{w,z}\sigma\Vert_{L^\infty_{w,z}}^2\right].
\end{split}
\end{equation*}
By Proposition \ref{prop:interpolation}, we have
\begin{equation*}
	\begin{split}
		\Vert\langle z\rangle^{1.6}\nabla_{w,z}\sigma\Vert_{L^\infty_{w,z}}\apprle A_1+A_3.
	\end{split}
\end{equation*}
Hence by Lemma \ref{lemma:Lip-E} and the bootstrap assumptions \eqref{eq:bootstrap-assumptions-sigma}, we get
\begin{equation*}
\begin{split}
\Vert\nabla^2E(s,q)-\nabla^2E_{-1}(w)\Vert_{L^\infty_{w,z}}&\le c_0^2\langle  F(s)\rangle^5+c_0^2\min\{(1+s)^{-1},\vert z\vert\},
\end{split}
\end{equation*}
which implies \eqref{eq:estimate-d^3K}. This completes the proof of Lemma \ref{lemma:bound-K}.
\end{proof}

Finally,  we can  close the bootstrap.
\begin{lemma}\label{lem:particle-density-bootstrap}
	Suppose that $\sigma\in C([-1,T];L^2_{w,z})$ satisfies \eqref{eq:sigma} for some Hamiltonian $\KK$ satisfying \eqref{eq:estimate-dK} and \eqref{eq:estimate-d^2K}. If in addition $E_{-1}$ and $\sigma_{-1}$ satisfy \eqref{eq:bound-E0} and \eqref{eq:bound-sigma0}, then there exists $T(c_0)>-1$ such that \eqref{eq:bootstrap-assumptions-sigma} hold for $A_1=A_2=A_3=2c_0.$	 
\end{lemma}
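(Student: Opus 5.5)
The plan is a continuity (bootstrap) argument on $[-1,T]$. I assume \eqref{eq:bootstrap-assumptions-sigma} with $A_1=A_2=A_3=4c_0$ and show that each quantity is in fact at most $2c_0$, provided $T(c_0)+1$ is small enough. Under this assumption, Lemma \ref{lemma:bound-K} supplies the vector field bounds \eqref{eq:estimate-dK}, \eqref{eq:estimate-d^2K} and \eqref{eq:estimate-d^3K}. Each level of \eqref{eq:bootstrap-assumptions-sigma} is then propagated by applying Lemma \ref{lem:Liouville} to the transport identities of Section \ref{subs:comm-relationship}. Since $\KK$ is Hamiltonian, $\mathfrak{L}$ preserves every $L^r$ norm along the flow. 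It therefore suffices to show that the time integral of each source term is $o(1)\cdot c_0$ as $T\to -1$.

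\textbf{Zeroth order.} The $L^2$ norm of $\sigma$ is conserved. For the weight, \eqref{eq:evolution-of-weighted-sigma} gives
\[
|\mathfrak{L}[\action{z}^4\sigma]|\apprle \action{z}^{3}|\sigma||\nabla_w\KK|.
\]
By \eqref{eq:estimate-dK}, $|\nabla_w\KK|\apprle c_0^2(\min\{(1+s)^{-1},|z|\}+\action{F(s)}^3)$. The term $|z|$ costs one power of $\action{z}$, so the source is bounded by $c_0^2\norm{\action{z}^4\sigma}{L^\infty}$ times a factor that is not integrable as stated. I will instead use the branch $(1+s)^{-1}$ only where $|z|\ge (1+s)^{-1}$, and close this part with Gronwall. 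The time-integral factor is then roughly $\int_{-1}^T\action{F}^3\,ds$ plus a logarithmic loss, and this loss can be absorbed after a minor refinement of the weight bookkeeping (for instance by using $\action{z}^{4}$ in place of $\action{z}^{3}$). Since $c_0^2\ll 1$, Gronwall yields the bound $\le 2c_0$. Estimate \eqref{eq:sigma-sigma0} follows in the same way from the second identity in \eqref{eq:evolution-of-weighted-sigma} together with the bound $|\nabla_z\KK|\apprle c_0^2$.

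\textbf{First order.} I use the matrix identity for $(\nabla_w\sigma,\theta\nabla_z\sigma)$ together with $\mathfrak{L}\theta=\partial_s\theta-\nabla_z\theta\cdot\nabla_w\KK$. Here $\partial_s\theta=-\theta^2$ has a good sign: it contributes $-\theta^2\nabla_z\sigma$, which is damping. The remaining part of $\mathfrak{L}\theta$ is $O(\theta^2|\nabla_w\KK|/\action{z}^2)\apprle c_0^2\theta$, which is harmless. The cross terms are the following:
\begin{itemize}
\item $\nabla_w^2\KK\,\nabla_z\sigma=(\theta^{-1}\nabla^2_w\KK)(\theta\nabla_z\sigma)$,
\item $\theta\nabla_z^2\KK\,\nabla_w\sigma$,
\item $\nabla_w\nabla_z\KK$, which multiplies both components.
\end{itemize}
Each coefficient is bounded by $c_0^2\action{F(s)}^4$ by \eqref{eq:estimate-d^2K}, and $\action{F}^4$ is integrable near $s=-1$. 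Gronwall therefore gives a growth factor $\exp(Cc_0^2\int_{-1}^T\action{F}^4)\le 3/2$ once $c_0$ is small or $T$ is close to $-1$. This improves the bound to $\le 2c_0$, using that $\theta\le\action{z}$ so that $\theta\nabla_z\sigma_{-1}$ is controlled by \eqref{eq:bound-sigma0}.

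\textbf{Second order.} I use \eqref{eq:commutative-relation-d^2sigma}. The coefficients involving $\mathfrak{L}(\ln\theta)$ again consist of $-\theta$, which has the good sign, plus an $O(c_0^2)$ term. The terms that are bilinear in second derivatives have coefficients bounded by \eqref{eq:estimate-d^2K}, with exactly the $\theta$-weights shown in \eqref{eq:commutative-relation-d^2sigma}. The inhomogeneous terms pair third derivatives of $\KK$ with first derivatives of $\sigma$; these are bounded by $c_0^2\action{F}^5\cdot A_2$ using \eqref{eq:estimate-d^3K}. Gronwall closes this level as before.

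The main obstacle is the weight matching at second order, together with the non-integrable singularity $\min\{(1+s)^{-1},|z|\}$ in $\nabla_w\KK$ and in $\nabla^2_{w}\KK$. Every occurrence of $\nabla_w^2\KK$ or $\nabla_w^3\KK$ must be paired with an extra factor $\theta$ on the $\sigma$ side, and I must check term by term that \eqref{eq:commutative-relation-d^2sigma} is arranged this way. For the zeroth-order weight, I would first try to exploit that $|z|$ and $(1+s)^{-1}$ can be traded: the bad region $|z|>(1+s)^{-1}$ is handled by spending one extra power of $\action{z}$ from \eqref{eq:bound-sigma0}.
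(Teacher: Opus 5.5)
Your proposal is correct and follows the paper's argument. It propagates each level of \eqref{eq:bootstrap-assumptions-sigma} by Lemma \ref{lem:Liouville} and Gr\"onwall applied to \eqref{eq:evolution-of-weighted-sigma}, \eqref{eq:commutation-relation-theta} and \eqref{eq:commutative-relation-d^2sigma}, using the vector-field bounds \eqref{eq:estimate-dK}--\eqref{eq:estimate-d^3K}; the $\theta$-weights absorb the non-integrable parts of $\nabla_w^2\KK$ and $\nabla_w^3\KK$, and $\partial_s\theta=-\theta^2$ keeps $\mathfrak{L}(\ln\theta)$ bounded above by an integrable quantity.

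The only muddled point is the zeroth-order step, where there is no non-integrable factor or logarithmic loss. Since $\min\{(1+s)^{-1},|z|\}\le\action{z}$, you get directly $\norm{\action{z}^{-1}\nabla_w\KK}{L^\infty_{w,z}}\lesssim c_0^2\action{F(s)}^3$, which is integrable near $s=-1$. This is exactly the Gr\"onwall factor in the paper's inequality for $\action{z}^m\sigma$, and it needs no case split and no extra power of $\action{z}$.
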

\begin{proof}
	 The proof is a simple application of Lemma \ref{lem:Liouville} and Gr\"{o}nwall's inequality. We first compute the conmutation relations:
\begin{equation}\label{eq:commutation-relation-theta}
\begin{aligned}
\mathfrak{L}\left(\theta \nabla_z \sigma\right) & =(\mathfrak{L} \ln \theta) \cdot \theta \nabla_z \sigma+\theta\left\{\nabla_z \mathcal{K}, \sigma\right\} \\
& =(\mathfrak{L} \ln \theta) \cdot \theta \nabla_z \sigma+\theta \nabla_z \sigma \cdot \nabla_w \nabla_z \mathcal{K}-\nabla_w \sigma \cdot \theta \nabla_{z } \nabla_z \mathcal{K}, \\
\mathfrak{L}\left(\nabla_w \sigma\right) & =\left\{\nabla_w \mathcal{K}, \sigma\right\}=\theta \nabla_z \sigma \cdot \theta^{-1} \nabla_w \nabla_w \mathcal{K}-\nabla_w \sigma \cdot \nabla_z \nabla_w \mathcal{K} .
\end{aligned}
\end{equation}	 
Using \eqref{eq:evolution-of-weighted-sigma}, we find that
\begin{equation*}
\left\|\langle z\rangle^m \sigma(s)\right\|_{L_{w, z}^r} \leq\left\|\langle z\rangle^m \sigma_{-1}\right\|_{L_{w, z}^r}+m \int_{-1}^s\left\|\langle z\rangle^{-1} \nabla_w \mathcal{K}\left(s^{\prime}\right)\right\|_{L_{w, z}^{\infty}}\left\|\langle z\rangle^m \sigma\left(s^{\prime}\right)\right\|_{L_{w, z}^r} \mathrm{~d} s^{\prime}
\end{equation*} 
and we can easily propagate the first line of \eqref{eq:bootstrap-assumptions-sigma}. 

For the derivatives, we also need to control $\theta$. We note that
\begin{equation*}
	\begin{split}
	\mathfrak{L}(\ln \theta)=-\of{1+\frac{z}{\action{z}^3}\nabla_w\KK}\theta\apprle c_0^2+c_0^2\langle F(s)\rangle^3.	
	\end{split}
\end{equation*}
and we deduce from Lemma \ref{lem:Liouville}, \eqref{eq:estimate-dK} and \eqref{eq:commutation-relation-theta} that
\begin{equation*}
\begin{aligned}
& \left\|\theta \nabla_z \sigma(s)\right\|_{L_{w, z}^r} \leq\left\|\theta \nabla_z \sigma_0\right\|_{L_{w, z}^r}+c_0^2 \int_{-1}^s\left\langle F\left(s^{\prime}\right)\right\rangle^4\left\{\left\|\theta \nabla_z \sigma\left(s^{\prime}\right)\right\|_{L_{w, z}^r}+\left\|\nabla_w \sigma\left(s^{\prime}\right)\right\|_{L_{w, z}^r}\right\} \,\mathrm{d} s^{\prime}, \\
& \left\|\nabla_w \sigma(s)\right\|_{L_{w, z}^r} \leq\left\|\nabla_w \sigma_0\right\|_{L_{w, z}^r}+c_0^2 \int_{-1}^s\left\langle F \left(s^{\prime}\right)\right\rangle^4\left\{\left\|\theta \nabla_z \sigma\left(s^{\prime}\right)\right\|_{L_{w, z}^r}+\left\|\nabla_w \sigma\left(s^{\prime}\right)\right\|_{L_{w, z}^r}\right\} \,\mathrm{d} s^{\prime},
\end{aligned}
\end{equation*} 
and this allows us to propagate the second line of \eqref{eq:bootstrap-assumptions-sigma} for a short time. Propagation of the last line of \eqref{eq:bootstrap-assumptions-sigma} follows similarly from \eqref{eq:estimate-d^3K} and \eqref{eq:commutative-relation-d^2sigma}.	 
\end{proof}

\subsection{Construction of local solutions}\label{subs:contruct-local-solutions}
This subsection is devoted to the proof of Theorem \ref{thm:wave-operator}. We construct the solution via Picard iteration which highly relies on the Hamiltonian structure of the system. 

\begin{proof}[Proof of Theorem \ref{thm:wave-operator}]
	Define $\sigma_{(0)}(s,w,z)=\sigma_{-1}(w,z)$ and given $\sigma_{(n)} \in C([-1,T];C^1_{w,z})$ satisfying \eqref{eq:bootstrap-assumptions-sigma} with $A_1=A_2=A_3=4c_0$, let $\sigma_{(n+1)} \in C([-1,T];C^1_{w,z})$ be the solution to 
\begin{equation}\label{eq:sigma-n-approx}
\begin{aligned}
&\partial_s \sigma_{(n+1)}+\{\sigma_{(n+1)},\mathcal{K}_n\}=0,\quad \sigma_{(n+1)}(-1)=\sigma_{-1}, \\
&\mathcal{K}_n:=-\lambda f(s) (\phi_{-1}(w)-\phi_n(s,q)),\\
& \phi_n(s,q):= \frac{1}{2\pi}\iint_{\mathbb{R}^2_p\times\mathbb{R}^2_y} \ln |q-y| \gamma_{(n)}^2(s,y,p)\myd{ydp},
\end{aligned}
\end{equation}
{{where $\gamma_{(n)}$}} is defined by $\sigma_{(n)}$ through \eqref{eq:change-of-variable-sigma}. Such a solution exists since \eqref{eq:sigma-n-approx} is {{a linear transport equation.}}  Moreover, by a standard argument, one can show that $\nabla_{z,w}^2 \sigma_{(n)}\in\Leb{\infty}_{s,w,z}$ (see e.g. \cite[Section IV]{BD85}).

By Lemmas \ref{lemma:bound-K} and \ref{lem:particle-density-bootstrap}, there exists $T(c_0)>-1$ such that \eqref{eq:bootstrap-assumptions-sigma} holds for all $s\in [-1,T(c_0)]$ with {$A_1=A_2=A_3=c_0$.} Moreover, one can show that \eqref{eq:sigma-c0} and \eqref{eq:sigma-sigma0} hold for $\sigma_{(n)}$ uniformly in $n$ by using the commutation relations given in Section \ref{subs:comm-relationship}.

Next, we show that $\sigma_{(n)}$ forms a Cauchy sequence in $\Leb{\infty}_{s,w,z}$. Define  
\[ \delta_{(n)}=\sigma_{(n+1)}-\sigma_{(n)},\quad \delta \mathcal{K}_{(n)}=\mathcal{K}_n-\mathcal{K}_{n-1},\quad \mathfrak{L}_n :=\partial_s + \{\cdot,\mathcal{K}_n\},\quad \delta \mathcal{L}_n=\{\cdot,\delta\mathcal{K}_{(n)}\} \]
so that 
\begin{equation}\label{eq:delta-n}
 \mathfrak{L}_n \delta_{(n)}=\delta \mathfrak{L}_n \sigma_{(n)}.
\end{equation}

Note that 
\begin{equation}\label{eq:Kn-z-w}
\begin{aligned}
\nabla_z \delta \mathcal{K}_{(n)}&=-\lambda (1+s)f(s)(E_n(s,q)-E_{n-1}(s,q)),\\
\nabla_w \delta \mathcal{K}_{(n)}&=-\lambda f(s)(E_n(s,q)-E_{n-1}(s,q))-\lambda^2 F(s) (E_n(s,q)-E_{n-1}(s,q))\cdot \nabla E_{-1}(q).
\end{aligned}
\end{equation}

We claim that 
\begin{equation}\label{eq:delta-n-sequence}
\begin{split}
\Vert \nabla_{w,z}\delta\mathcal{K}_{(n)}(s)\Vert_{L^\infty_{w,z}}&\apprle  c_0\langle F(s)\rangle^6\norm{\delta_{(n-1)}(s)}{L^\infty_{w,z}\cap L^2_{w,z}},
\end{split}
\end{equation}
which will be proved soon.  Since $\delta_{(n)}$ satisfies \eqref{eq:delta-n}, using the Hamiltonian structure and the uniform estimate \eqref{eq:sigma-c0}, we get 
\begin{align*}
 \norm{\delta_{(n)}(s)}{\Leb{r}_{w,z}}&\apprle \int_{-1}^s \norm{\nabla_{w,z} \sigma_{(n)}}{\Leb{r}_{w,z}} \norm{\nabla_{w,z} \delta \mathcal{K}_{(n)}}{\Leb{\infty}_{w,z}} \myd{\tau}\\
 &\apprle c_0^2 \int_{-1}^s  \action{F(\tau)}^6   \left(\norm{\delta_{(n-1)}(\tau)}{\Leb{2}_{w,z}}+\norm{\delta_{(n-1)}(\tau)}{\Leb{\infty}_{w,z}}\right)\myd{\tau}
\end{align*}
for $r\in \{2,\infty\}$. If we choose $T$ sufficiently small, then one can easily show that $\sigma_{(n)}$ form a Cauchy sequence in $ \Leb{\infty}_s\Leb{2}_{w,z}\cap \Leb{\infty}_{s,w,z}$ (see the proof of Theorem \ref{thm:lwp}), and hence there exists $\sigma \in \Leb{\infty}_s\Leb{2}_{w,z}\cap \Leb{\infty}_{s,w,z}$ such that $\sigma_{(n)}\rightarrow \sigma$. Moreover, by the uniform convergence, we see that $\sigma \in C([-1,T];\Leb{2}_{w,z})$.

On the other hand, by interpolation (Proposition \ref{prop:interpolation} (i)) , we have
\begin{align*}
\norm{\nabla_{w,z}\delta_{(n)}}{\Leb{\infty}_{w,z}}&\apprle \norm{\delta_{(n)}}{\Leb{\infty}_{w,z}}^{1/2}\left[\norm{\nabla_{w,z}^2 \sigma_{(n+1)}}{\Leb{\infty}_{w,z}}+\norm{\nabla_{w,z}^2 \sigma_{(n)}}{\Leb{\infty}_{w,z}}\right]^{1/2}.
\end{align*}
Since $\nabla^2_{w,z}\sigma_n$ is uniformly bounded in $\Leb{\infty}_{s,w,z}$, it follows that $\sigma_{(n)}$ forms a Cauchy sequence in $C_s C^1_{w,z}$. Moreover, {{by Fatou's lemma or the conservation}, and \eqref{eq:evolution-of-weighted-sigma}}, one can show that $\sigma$ satisfies \eqref{eq:sigma-c0} and \eqref{eq:sigma-sigma0}. Since $\sigma_{(n)}$ converges to $\sigma$ in $C_s C^1_{w,z}$ and $\sigma_{(n+1)}$ satisfies \eqref{eq:sigma-n-approx}, we see that $\sigma$ is a solution of \eqref{eq:sigma} with $\sigma(-1)=\sigma_{-1}$. {{The proof of uniqueness is straightforward and is therefore omitted}}.
Therefore, it remains to prove \eqref{eq:delta-n-sequence}.

First, it follows from \eqref{eq:change-of-variable-sigma} and \eqref{eq:bootstrap-assumptions-sigma} that 
\begin{align*}
&\relphantom{=}|E_n(s,q)-E_{n-1}(s,q)|\\
&\apprle \norm{\delta_{(n-1)}(s)}{\Leb{\infty}_{w,z}} \left(\norm{\action{p}^3\gamma_{(n)}}{\Leb{\infty}_{q,p}}+\norm{\action{p}^3\gamma_{(n-1)}}{\Leb{\infty}_{q,p}} \right)\\
&\relphantom{=}+\norm{\delta_{(n-1)}(s)}{\Leb{2}_{w,z}} \left(\norm{\gamma_{(n)}}{\Leb{2}_{w,z}}+\norm{\gamma_{(n-1)}}{\Leb{\infty}_{w,z}} \right)\\
&\apprle {{\action{c_0}^{6}}}\action{F(s)}^3 \norm{\delta_{(n-1)}(s)}{\Leb{\infty}_{w,z}} \left(\norm{\action{z}^3\sigma_{(n)}}{\Leb{\infty}_{w,z}}+\norm{\action{z}^3\sigma_{(n-1)}}{\Leb{\infty}_{w,z}} \right)\\
&\relphantom{=}+\norm{\delta_{(n-1)}(s)}{\Leb{2}_{w,z}} \left(\norm{\sigma_{(n)}}{\Leb{2}_{w,z}}+\norm{\sigma_{(n-1)}}{\Leb{\infty}_{w,z}} \right)\\
&\apprle {{c_0^{3}}} \action{F(s)}^3 \norm{\delta_{(n-1)}(s)}{\Leb{\infty}_{w,z}}+ c_0 \norm{\delta_{(n-1)}(s)}{\Leb{2}_{w,z}}
\end{align*}
and so it follows from  \eqref{eq:Kn-z-w} that 
\begin{align*}
|\nabla_z \delta\mathcal{K}_{(n)}(s,q)|&\apprle  c_0 \action{F(s)}^6\left(\norm{\delta_{(n-1)}(s)}{\Leb{\infty}_{w,z}} +\norm{\delta_{(n-1)}(s)}{\Leb{2}_{w,z}}\right).
\end{align*}
To estimate $\nabla_w \delta \mathcal{K}_{(n)}$, it follows from \eqref{eq:Kn-z-w} and \eqref{eq:bound-E0} that  
\begin{equation}\label{eq:delta-K-n}
\begin{aligned}
|\nabla_w \delta \mathcal{K}_{(n)}(s,q)|&\apprle f(s) |E_n(s,q)-E_{n-1}(s,q)|+\action{F(s)}|E_n(s,q)-E_{n-1}(s,q)| |\nabla E_0(q)|\\
&\apprle f(s) |E_n(s,q)-E_{n-1}(s,q)| +c_0^2 \action{F(s)} |E_n(s,q)-E_{n-1}(s,q)|.
\end{aligned}
\end{equation}
The second part is controlled by 
\[ 
  c_0^2 \action{F(s)}^6\left(\norm{\delta_{(n-1)}(s)}{\Leb{\infty}_{w,z}} +\norm{\delta_{(n-1)}(s)}{\Leb{2}_{w,z}}\right).
\]

To estimate the bound for the first term, we note that 
 \begin{equation}\label{eq:difference-density}
  \partial_s (\rho_{(n)}-\rho_{(n-1)})+\Div_q (\delta \boldj_n)=0,\quad \rho_{(n)}-\rho_{(n-1)}=\int_{\mathbb{R}^2} \gamma_{(n)}^2-\gamma_{(n-1)}^2 \myd{p}.
 \end{equation}
Following the argument as in the proof of Lemma \ref{lemma:Lip-E}, it follows from \eqref{eq:difference-density} that  
\begin{align*}
&|E_n(s,q)-E_{n-1}(s,q)|\\
&\apprle (1+s)\action{F(s)}^2 \left( \norm{\delta \boldj_n(s)}{\Leb{\infty}_{q}} + \norm{\delta_{n-1}(s)}{\Leb{2}_{w,z}}\left(\norm{\sigma_{(n)}(s)}{\Leb{2}_{z,w}}+\norm{\sigma_{(n-1)}(s)}{\Leb{2}_{z,w}}\right)\right).
\end{align*}
On the other hand, we have
\begin{align*}
 \norm{\delta \mathbf{j}_n(s)}{\Leb{\infty}_q} &\apprle  \norm{\delta_{(n-1)}(s)}{\Leb{\infty}_{w,z}}\left[\norm{\action{p}^4\gamma_{(n)}(s)}{\Leb{\infty}_{q,p}}+\norm{\action{p}^4\gamma_{(n-1)}(s)}{\Leb{\infty}_{q,p}} \right],\\
 &\apprle \action{F(s)}^4 \norm{\delta_{(n-1)}(s)}{\Leb{\infty}_{w,z}} \left(\norm{\action{z}^4 \sigma_{(n)}(s)}{\Leb{\infty}_{w,z}}+\norm{\action{z}^4 \sigma_{(n-1)}(s)}{\Leb{\infty}_{w,z}} \right).
\end{align*}
Hence we have 
\begin{align*}
|E_n(s,q)-E_{n-1}(s,q)|&\apprle c_0 (1+s)\action{F(s)}^6\left( \norm{\delta_{n-1}(s)}{\Leb{2}_{w,z}}+\norm{\delta_{n-1}(s)}{\Leb{\infty}_{w,z}}\right)
\end{align*}
and therefore by \eqref{eq:delta-K-n}, we get 
\[ |\nabla_w \delta \mathcal{K}_{(n)}(s,q)|\apprle c_0 \action{F(s)}^6\left( \norm{\delta_{n-1}(s)}{\Leb{2}_{w,z}}+\norm{\delta_{n-1}(s)}{\Leb{\infty}_{w,z}}\right),\]
which completes the proof of Theorem \ref{thm:wave-operator}. 
\end{proof}

Finally, we are ready to prove Theorems \ref{thm:B} and \ref{thm:c}.
\begin{proof}[Proof of Theorem \ref{thm:B}]
Note that \eqref{eq:condition-mu-infinity} leads to \eqref{eq:bound-sigma0} by identifying $\mu_{\infty}$ with $\sigma_{1}$. By Theorem \ref{thm:wave-operator} with reverse time and using the lens transform, for $\varepsilon_0$ small enough the problem admits the global solution to \eqref{eq:VP-rep-potential-reformulate} on $[0,\infty)$. Moreover, $\mu(t=0)$ satisfies the assumption in Theorem \ref{thm:A} except that we only have the bound $\norm{\nabla_{x,v}\mu_0}{L^\infty}\lesssim \varepsilon_0$ (without moment in $v$). This yields only the local uniform convergence stated in \eqref{eq:asymptotic-local-uniform}, thereby completing the proof of Theorem~\ref{thm:B}.
\end{proof}

\begin{proof}[Proof of Theorem \ref{thm:c}]
Following the previous argument, we can apply Theorem \ref{thm:B} to the reversed time to construct a map from $-\infty$ to $0$. Then we apply Theorem \ref{thm:A} at $t=0$ to construct a scattering map. This completes the proof of Theorem \ref{thm:c}.
\end{proof}

\begin{remark}
Without a harmonic potential, \cite{FOPW23} obtained the existence of wave operators and the scattering map for large final data in three dimensions by applying the Lions and Perthame result \cite{LP91}. A similar result was proved in \cite{HK25} for the Vlasov-Riesz systems of order $\alpha$ when $1<\alpha<1+\delta$. It is unclear whether we can construct wave operators for large final data.
\end{remark}

\bibliographystyle{amsplain}

\providecommand{\bysame}{\leavevmode\hbox to3em{\hrulefill}\thinspace}
\providecommand{\MR}{\relax\ifhmode\unskip\space\fi MR }
\providecommand{\MRhref}[2]{%
  \href{http://www.ams.org/mathscinet-getitem?mr=#1}{#2}
}
\providecommand{\href}[2]{#2}

\end{document}